\definecolor{blue(munsell)}{rgb}{0.0, 0.5, 0.69}
\DeclareFontFamily{U}{min}{}
\DeclareFontShape{U}{min}{m}{n}{<-> udmj30}{}
\newcommand{\yo}{\!\text{\usefont{U}{min}{m}{n}\symbol{'210}}\!}
\newcommand{\yoco}{\text{\reflectbox{\yo}}}
\theoremstyle{definition}
\newtheorem{thm}{Theorem}[subsection]
\newtheorem*{thm*}{Theorem}
\newtheorem*{constr*}{Construction}
\newtheorem{prop}[thm]{Proposition}
\newtheorem{lem}[thm]{Lemma}
\newtheorem{cor}[thm]{Corollary}
\newtheorem{defn}[thm]{Definition}
\newtheorem{rem}[thm]{Remark}
\newtheorem{para}[thm]{}
\newtheorem{exa}[thm]{Example}
\newcommand{\Dbb}{\mathbb{D}}
\newcommand{\Ibb}{\mathbb{I}}
\newcommand{\Bcal}{\mathcal{B}}
\newcommand{\Ccal}{\mathcal{C}}
\newcommand{\Dcal}{\mathcal{D}}
\newcommand{\Ecal}{\mathcal{E}}
\newcommand{\Fcal}{\mathcal{F}}
\newcommand{\Gcal}{\mathcal{G}}
\newcommand{\Mcal}{\mathcal{M}}
\newcommand{\Ocal}{\mathcal{O}}
\newcommand{\Pcal}{\mathcal{P}}
\newcommand{\Scal}{\mathcal{S}}
\newcommand{\Tcal}{\mathcal{T}}
\newcommand{\Set}{\mathrm{Set}}
\newcommand{\op}{{}^{\mathrm{op}}}
\newcommand{\pt}{\mathsf{pt}}
\newcommand{\ev}{\mathsf{ev}}
\newcommand{\Ind}{\mathsf{Ind}}
\newcommand{\Pro}{\mathsf{Pro}}
\newcommand{\Coll}{\mathsf{Coll}}
\newcommand{\impr}[3]{{#1} \oslash_{#2} {#3}}
\DeclareMathOperator{\id}{id}
\DeclareMathOperator{\mor}{mor}
\DeclareMathOperator{\ob}{ob}
\DeclareMathOperator{\dom}{dom}
\DeclareMathOperator*{\colim}{colim}
\DeclareMathOperator{\PSh}{PSh}
\DeclareMathOperator{\Sh}{Sh}
\DeclareMathOperator{\Exp}{Exp} %placeholder
\title{Topoi with enough points}
\author{Ivan Di Liberti}
\author{Morgan Rogers}
\address{
Ivan \textsc{Di Liberti} \newline
Department of Philosophy, Linguistics and Theory of Science\newline
University of Gothenburg\newline
Gothenburg, Sweden\newline
\href{mailto:diliberti.math@gmail.com}{\sf diliberti.math@gmail.com}
}
\address{
Morgan \textsc{Rogers}: \newline
Laboratoire d'Informatique Paris Nord\newline
Université Sorbonne Paris Nord\newline
Paris, France\newline
\href{mailto:rogers@lipn.univ-paris13.fr}{\sf rogers@lipn.univ-paris13.fr}
}
\thanks{The first named author received funding from Knut and Alice Wallenberg Foundation, grant no.~2020.0199.}
\begin{document}

\maketitle

\begin{abstract}
We extend Deligne's original argument showing that locally coherent topoi have enough points, clarified using collage diagrams. We show that our refinement of Deligne's technique can be adapted to recover every existing result of this kind, including the most recent results about $\kappa$-coherent $\kappa$-topoi. Our presentation allows us to relax the cardinality assumptions typically imposed on the sites involved. We show that a larger class of locally finitely presentable toposes have enough points and that a closed subtopos of a topos with enough points has enough points.
 
\smallskip \noindent \textbf{Keywords.} topos theory, categorical logic, completeness theorem, spatial locale.
\textbf{MSC2020.} 03G30, 03C75, 18B25, 18C50, 18F10, 18F70.
\end{abstract}

 {
   \hypersetup{linkcolor=black}
   \tableofcontents
 }

\section*{Introduction}

\subsection*{General landscape}
This paper is concerned with the general problem of assessing whether a topos has enough points, with motivations coming both from geometry and logic. As we will see below, this problem has been influential in both fields up to the present day (see for example the work of Lurie \cite[VII, 4.1]{lurie2004derived} and Barwick et al. \cite[II.3]{barwick2018exodromy}, for the case of geometry, and Espíndola and Kanalas \cite{espindola2023every} for the case of logic). We shall start by giving an account of the development of this problem to properly frame our contribution and the significance of our work.

In 1964 Pierre Deligne proved a very celebrated theorem in topos theory.
\begin{thm*}[Deligne, {\cite[Exposé VI, 9.0]{bourbaki2006theorie}}]
    Every locally coherent topos has enough points.
\end{thm*}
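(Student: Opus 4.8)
The plan is to follow Deligne's original compactness strategy, reorganised so that the model-building bookkeeping becomes transparent. First I would fix a presentation $\Ecal \simeq \Sh(\Ccal, J)$ in which $\Ccal$ is the (essentially small) category of coherent objects of $\Ecal$ and $J$ is the coherent topology, generated by finite covering families; the hypothesis of local coherence is exactly what guarantees such a site exists, and the distinction between coherent and \emph{locally} coherent toposes concerns only whether the terminal object is quasi-compact, which plays no role in the construction of points. By Diaconescu's theorem a point of $\Ecal$ is the same thing as a flat functor $F \colon \Ccal \to \Set$ that is $J$-continuous, and because every member of $J$ is finite, $J$-continuity reduces to the finitary demand that $F$ carry each finite generating cover $\{U_i \to U\}$ to a jointly surjective family $\{F(U_i) \to F(U)\}$.

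Next I would record the conservativity criterion these points must meet. Since the representable sheaves generate $\Ecal$, a set $P$ of points is jointly conservative as soon as it detects proper subobjects of representables: concretely, whenever a sieve $S$ on an object $c$ fails to be a $J$-cover, there should exist some $F \in P$ and an element $x \in F(c)$ lying outside $\bigcup_{(f\colon d\to c)\in S} \im(F(f))$. The heart of the argument is to manufacture such a witnessing point for each non-covering sieve. Fixing $c$ and $S$, I would build $F$ as a filtered colimit of representables $\Ccal(c_\alpha, -)$ produced by a transfinite, Henkin-style construction: begin with $\Ccal(c, -)$, tracking the tautological element of $F(c)$, and at each stage adjoin just enough witnesses to force flatness and $J$-continuity in the colimit, all the while maintaining the invariant that the tracked element is never driven into (the image of) $S$. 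Equivalently, one may extract the point from a maximal \emph{prime} filter on the lattice of subobjects of $c$ via Zorn's lemma, the finiteness of the covers in $J$ ensuring that such a maximal filter decides each finite cover by committing to a single branch.

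The principal obstacle is precisely the simultaneous maintenance of flatness and $J$-continuity while keeping the chosen element clear of $S$. Flatness demands that the diagram indexing the colimit be filtered, whereas $J$-continuity demands that every finite cover be split by the witnesses adjoined, and these two requirements pull against one another; reconciling them forces a careful interleaving of the construction stages, and this is exactly the point at which the finiteness of the coherent topology is indispensable, since it is what allows a single prime choice to remain compatible with all covers at once, and at which a choice principle (Zorn's lemma, or equivalently the Boolean prime ideal theorem on the relevant fragment) necessarily enters. The secondary, more quantitative difficulty is to confirm that a genuine \emph{set} of such points suffices and that the transfinite construction halts, which amounts to bounding the cardinality of the witnesses introduced along the way. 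I expect this last piece of bookkeeping to be what the collage-diagram formalism organises cleanly, and what ultimately licenses the relaxation of the customary cardinality hypotheses on the site.
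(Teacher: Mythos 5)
Your outline follows the same route as the paper's proof of Corollary \ref{cor:deligne}, which is itself a reorganisation of Deligne's original argument: present the topos on a coherent site, reduce via Diaconescu to flat $J$-continuous functors, and build the required point transfinitely as a filtered colimit of representables while preserving a chosen separation. The conservativity criterion you state (detecting non-covering closed sieves on representables) is correct and equivalent to the ``distinguish parallel pairs'' formulation used in Lemma \ref{lem:manifestenoughpoints}. However, there is a genuine gap at exactly the point you flag as ``the principal obstacle'': you never prove that at each stage one \emph{can} adjoin a witness splitting a given finite cover without driving the tracked element into $S$. This single step is the entire mathematical content of the theorem; everything else is bookkeeping. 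The paper's resolution is Proposition \ref{prop:someimprovement}: because $\Ccal$ has pullbacks, the category of points of $\PSh(\Ccal)$ has the corresponding (co)limits (Proposition \ref{indindind}), so one can pull the current representable point back along each leg of the cover; then, because $j_*(X)$ is a sheaf and filtered colimits commute with \emph{finite} limits, one gets $p^*(j_*(X)) \cong \lim_{f_i \in S} p_i^*(j_*(X))$, whence at least one branch $p_i$ still separates $x$ from $y$. Without an argument of this shape, your invariant is merely asserted. Note also that this construction dissolves the tension you describe between flatness and $J$-continuity: each improvement step replaces a representable point by another representable (a pullback in $\Ccal$), so flatness and preservation of all limits are maintained for free along the chain, and only the limit stages require the commutation argument.

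A secondary inaccuracy: the parenthetical claim that one may ``equivalently'' extract the point from a maximal prime filter on $\mathrm{Sub}(c)$ via Zorn's lemma is only correct for localic toposes. A point of a general coherent topos is a flat $J$-continuous functor on all of $\Ccal$, which is strictly more data than a prime filter on the subobject lattice of a single object; the filter-theoretic shortcut produces points of the localic reflection only. The cardinality bookkeeping you defer is, by contrast, unproblematic: one point per non-covering closed sieve on a generator (or per parallel pair between representables) is a set, and the transfinite induction in Theorem \ref{thm:improvement} terminates because it is indexed by a well-ordering of the set $p^*(\Omega)$ followed by $\omega$ further passes.
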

The theorem's original motivation came from algebraic geometry, but after Joyal and Reyes developed the theory of classifying topoi \cite{reyes1974sheaves}, it was observed by Lawvere \cite{lawvere1975continuously} that Deligne's theorem was essentially the statement of Gödel completeness theorem for first order logic in disguise. This realisation crowned Deligne's theorem as a major result in categorical logic, and a source of inspiration for finding other completeness-like results using techniques from topos theory. 

To the present day Deligne's theorem remains the main argument to show that a wide class of topoi have enough points, and to some extent this paper investigates the limits (and the possibly unexploited potential) of this result. With current technology, we have essentially two different techniques to prove that (locally) coherent topoi have enough points. 

\begin{itemize}
    \item In \cite[7.4]{johnstone1977topos}, Johnstone gives an account of the original argument by Deligne, making a couple of minor adjustment for the sake of readability and elegance. 
    \item In \cite[IX.11]{sheavesingeometry}, Moerdijk and Mac Lane give a more conceptual proof of the theorem, based on an injectivity property of coherent topoi. This idea was further investigated by the first author in \cite{di2022geometry}.
\end{itemize}

These two proof strategies are -- to our understanding -- different in nature, and seem to rely on different aspects of coherent topoi, with Deligne's result clearly the one with a greater chance of being generalised to other classes of topoi. Following Deligne's theorem, it was some time before new results emerged showing that further classes of topoi have enough points. Makkai and Reyes proved that so-called \textit{separable} topoi have enough points. 
\begin{thm*}[Makkai and Reyes, {\cite[Theorem 6.2.4, page 180]{makkai2006first}}]
Let $(\Ccal,J)$ be a site where $\Ccal$ is a countable category with pullbacks and $J$ is generated by a countable family of sieves. Then $\Sh(\Ccal,J)$ has enough points.
\end{thm*}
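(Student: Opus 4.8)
The plan is to construct enough flat $J$-continuous functors $F\colon \Ccal \to \Set$ (equivalently, points of $\Sh(\Ccal,J)$) directly, replacing the compactness input of Deligne's coherent argument by a sequential Rasiowa--Sikorski (Baire category) construction that the countability hypotheses make available. By the standard criterion that a sheaf topos has enough points precisely when the inverse images of its points are jointly conservative, and since in $\Sh(\Ccal,J)$ a sieve is $J$-covering exactly when it induces an isomorphism on sheafified representables, it suffices to prove the following: for every object $c$ of $\Ccal$ and every sieve $S$ on $c$ that is \emph{not} $J$-covering, there is a flat $J$-continuous $F$ together with an element $x \in F(c)$ such that $x \notin \bigcup_{f \in S} \im F(f)$. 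Ranging over all such pairs $(c,S)$, these functors detect every proper mono of the above form and so become jointly conservative.

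To build $F$, I would present it as a filtered colimit of representables along a \emph{thread}: a chain $c = c_0 \xleftarrow{g_0} c_1 \xleftarrow{g_1} c_2 \xleftarrow{g_2} \cdots$ in $\Ccal$, setting $F = \colim_n \Ccal(c_n, -)$ and $x = [\id_c]$. Since $\Nbb$ is filtered, $F$ is automatically flat. Writing $u_n = g_0 \cdots g_{n-1}\colon c_n \to c$ for the composite to the root, the condition $x \notin \bigcup_{f\in S}\im F(f)$ unwinds to the requirement that no $u_n$ lies in the sieve $S$. I would maintain the stronger invariant that the pullback sieve $u_n^{*} S$ is not $J$-covering on $c_n$; this holds initially because $u_0^{*} S = S$ is not covering, and it forces $u_n \notin S$ at every stage, for if $u_n \in S$ then $u_n^{*}S$ is the maximal sieve, which is covering.

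The inductive step, and the technical heart of the argument, is the following local lemma: if $u_n^{*} S$ is not covering on $c_n$ and $R$ is any $J$-covering sieve on $c_n$, then there exists $g_n \in R$ with $g_n^{*}(u_n^{*} S)$ not covering on its domain. This is exactly the contrapositive of the transitivity (local character) axiom of a Grothendieck topology: were $g^{*}(u_n^{*}S)$ covering for every $g \in R$, transitivity would make $u_n^{*} S$ covering, contradicting the invariant. Thus we can always refine the thread through a prescribed cover while preserving the invariant. The countability hypotheses now enter through bookkeeping: because $\Ccal$ is countable and $J$ is generated by countably many sieves, the set of continuity requirements---pairs consisting of an object reachable along the thread, a generating cover on it, and an element of $F$ at that object to be refined---is countable, so a dovetailing enumeration lets us discharge every requirement at some finite stage. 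The resulting thread yields a flat functor that is $J$-continuous, since every cover is eventually met and hence $F$ sends covering sieves to jointly surjective families, and that avoids $S$ by construction.

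The main obstacle I anticipate is organisational rather than conceptual: arranging the dovetailing so that \emph{all} continuity requirements---including those created by objects and elements that only appear later in the construction---are met while never being forced into $S$, and checking that the pullback hypothesis on $\Ccal$ supplies the bookkeeping with a clean stock of covering families to refine along (and secures flatness in the form one wants). Once the local lemma is in hand, the construction is the countable analogue of the Zorn/ultrafilter step in Deligne's proof, with the Baire category theorem playing the role that compactness plays in the coherent setting.
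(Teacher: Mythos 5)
Your proposal is correct, and it reaches Makkai--Reyes by a genuinely different route from ours. We factor the argument through an abstract notion of \emph{improvement} (Theorem \ref{thm:improvement}): the improvement step (Proposition \ref{prop:someimprovement}) is obtained by forming pullbacks of a representable point against a covering presieve --- this is where pullbacks in $\Ccal$ enter, via Proposition \ref{indindind} --- and then interchanging a filtered colimit with the limit supplied by the sheaf condition on $j_*(X)$ to see that some leg still separates $x$ from $y$; the countable bookkeeping is then organised through warps and cotrees in Theorem \ref{thm:supermakkai}, of which Corollary \ref{cor:makkai} is a special case. Your refinement step is instead the contrapositive of the local-character axiom applied to the pulled-back non-covering sieve, and the invariant you carry is ``$u_n^*S$ is not covering'' rather than ``$x \neq y$ in $p^*(j_*X)$''. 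These are equivalent in effect, but your version is more elementary and, notably, never uses the pullback hypothesis on $\Ccal$: flatness of $\colim_n \Ccal(c_n,-)$ needs only that the chain is filtered, and sieves pull back along arbitrary morphisms, so your argument in fact proves the statement without pullbacks --- something we only recover by the site-enlargement detour of Theorem \ref{thm:pbfree}. Two steps to make explicit when you write this up: (i) the reduction of ``enough points'' to detecting non-covering sieves on objects of $\Ccal$, i.e.\ that joint conservativity reduces to reflecting proper subobjects of the generators; and (ii) the fact that discharging continuity only for the countably many \emph{generating} covers (pulled back to the thread) suffices, because the sieves sent to epimorphic families by a flat functor form a Grothendieck topology. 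What our heavier framework buys is uniformity: the same improvement mechanism yields Deligne's theorem (no countability, hence Zorn's Lemma and a secondary induction) and scales to the $\kappa$-ary setting of Espíndola--Kanalas, where one must additionally arrange that the limit point preserves $<\!\kappa$-limits --- a constraint that your local-character trick does not address on its own.
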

This result was inspired by the Fourman-Grayson completeness theorem for the logic $L_{\omega_1,\omega_0}$ (see \cite{fourman1982formal}), and indeed it is almost the translation of it into topos-theoretic language through the bridge of classifying topoi. The proof in \cite{makkai2006first} is a bit sketchy and of model theoretic inspiration, thus it is hard to compare this result to Deligne's one. 

\subsection*{Recent developments}

Let us now recount the most recent developments in this topic. Lurie has imported Deligne's original argument to the world of $\infty$-topoi \cite[VII, 4.1]{lurie2004derived}. The proof carries over with minor adjustments under the additional assumption that the $\infty$-topos is hypercomplete (i.e.\ that it can be presented in terms of a site). Lurie's analysis did not make Deligne's argument sharper in any sense but stands as the best result available for $\infty$-topoi in the broad landscape of geometry. 

On the logical side, the main advancements are due to Espíndola \cite{ESPINDOLA2019137,espindola2020infinitary,espindola2023every}. The first two of these papers generalise the result due to Makkai and Reyes to what he calls $\kappa$-coherent $\kappa$-topos, a notion designed to deliver completeness results for $\kappa$-infinitary generalizations of coherent logic. His techniques rely heavily on logic, so the relationship between these results and that of Deligne remained unclear, especially in terms of technology. We recommend \cite{ESPINDOLA2019137} for a comprehensive reference of all the completeness result of infinitary logic and their relation to $\kappa$-topoi. More recently, together with Kanalas, they have provided a more categorical analysis of the original statement, which also delivers a vast generalization of the original results that Espíndola had achieved in his PhD thesis \cite{espindola2023every}.

\begin{thm*}[Espíndola and Kanalas, { \cite[Thm. 5.4]{espindola2023every}}]
Let $\kappa$ be a regular cardinal. Let $(\Ccal,J)$ be a $\kappa$-site, meaning that $\Ccal$ has $<\kappa$-limits and is of local size $\leq \kappa$ and that $J$ is generated by a family of presieves having cardinality $\leq \kappa$. Then $\Sh(\Ccal,J)$ has enough $\kappa$-points.
\end{thm*}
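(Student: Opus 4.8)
The plan is to follow Deligne's model-building strategy, reorganised through collage diagrams, and to run the construction transfinitely for $\kappa$ steps. First I would translate ``enough $\kappa$-points'' into a model-existence statement. A $\kappa$-point of $\Sh(\Ccal, J)$ is the same datum as a $J$-continuous functor $F \colon \Ccal \to \Set$ preserving $<\kappa$-limits (equivalently, a $\kappa$-filtered colimit of representables), its inverse image being obtained by left Kan extension along the Yoneda embedding, so that $F(c) = p^*(\ell y c)$ for the corresponding point $p$. Saying that $\Sh(\Ccal, J)$ has enough $\kappa$-points then amounts to the joint conservativity of these functors, and by a routine reduction to monomorphisms into (sheafified) representables this is equivalent to the following separation property: for every object $c$ and every sieve $S$ on $c$ with $S \notin J$, there exist a $\kappa$-flat continuous $F$ and an element $x \in F(c)$ that is \emph{not forced} by $S$, i.e.\ $x$ does not lie in the image of $\coprod_{(f \colon d \to c)\in S} F(d) \to F(c)$.

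The heart of the argument is the construction of such an $F$ together with its witness $x$. I would obtain $F$ as a $\kappa$-filtered colimit of representable functors $\Ccal(d_i, -)$, the indexing $\kappa$-filtered category $\Ibb$ being assembled as a single collage diagram over $\Ibb \to \Ccal$ whose successive layers record the objects introduced at each stage. One starts from $d_0 = c$ and the tautological element $x = [\id_c]$ arising from $\id_c \in \Ccal(d_0, c)$. At each successor stage one discharges a single pending obligation of one of two kinds: a \emph{flatness} obligation, where a $<\kappa$-indexed configuration of elements must be refined by a common cone, resolved by forming the relevant $<\kappa$-limit in $\Ccal$, which exists by hypothesis; and a \emph{continuity} obligation, where for a generating presieve $R \in J$ (of cardinality $\le \kappa$) and an element lying over the covered object one must choose a member of $R$ through which that element lifts. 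The collage presentation is exactly what keeps this bookkeeping functorial: each extension $F_\alpha \to F_{\alpha+1}$ is a morphism of diagrams adding one layer, and the transition maps of the colimit are read off directly from the collage.

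To close the construction up I would enumerate all obligations fairly. Because $\Ccal$ has local size $\le \kappa$ there are at most $\kappa$ morphisms out of any object, and $J$ is generated by $\le\kappa$ presieves; hence at any stage the partial model, being of size $<\kappa$, generates at most $\kappa$ new obligations. Regularity of $\kappa$ then guarantees that a fair schedule of length $\kappa$ discharges every obligation that ever appears, so the limiting colimit $F$ is genuinely $\kappa$-flat and $J$-continuous. It is precisely here that relaxing a global cardinality bound to local size $\le\kappa$ is permissible: only the per-object count of morphisms, and not the total number of objects, enters the scheduling.

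The main obstacle I anticipate is keeping the witness $x$ unforced throughout the continuity steps: whenever an element lying over $c$ is lifted through a covering presieve, one must ensure that the construction is never compelled to factor $x$ through the non-covering sieve $S$. This is the genericity (or avoidance) core of the argument, and it is exactly where the hypothesis $S \notin J$ is used; one proves that at every stage there is a legal choice of lift preserving the incompatibility between $x$ and $S$, so that $x$ persists in the colimit as an element not forced by $S$. A secondary but genuine difficulty is verifying that the collage diagram remains $\kappa$-filtered across limit stages and that $F$ really preserves all $<\kappa$-limits there, which again rests on the regularity of $\kappa$.
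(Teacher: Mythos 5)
Your successor-stage argument is the classical one and is sound: writing the current approximation as a representable $\yoco(a_\alpha)$ with structure map $x_\alpha\colon a_\alpha\to c$, the invariant to carry is that the pulled-back sieve $x_\alpha^*S$ is not $J$-covering, and at a continuity step the local character axiom guarantees at least one leg of the pulled-back covering presieve along which this invariant survives. The genuine gap is at limit stages $\lambda<\kappa$, and for $\kappa>\omega$ these cannot be dodged: to make the final colimit $\kappa$-filtered (which is what you need for $F$ to preserve $<\!\kappa$-limits) you must, cofinally often, insert an object dominating a cochain of length $\lambda<\kappa$, and the only candidate is the limit $a_\lambda=\lim_{\beta<\lambda}a_\beta$ in $\Ccal$. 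At such a stage there is no ``legal choice'' to make --- the object is determined --- and neither the invariant ``$x_\lambda^*S$ is not covering'' nor even the weaker ``$x_\lambda$ does not factor through $S$'' follows from its holding at every $\beta<\lambda$: the limit projection can create new factorizations. This is precisely the obstruction the paper isolates in Example \ref{exa:binaryfan}, where the limit of a cochain of improvements fails to be an improvement. So the sentence ``one proves that at every stage there is a legal choice of lift preserving the incompatibility'' is exactly the point at which your proof is missing an argument, and in general no such stage-by-stage argument exists.

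The paper's route (Theorem \ref{thm:superespindola}, specialized in Corollary \ref{cor:espindola} by extracting a $\kappa$-warp from the $\le\!\kappa$ generating presieves via pullbacks) repairs this by abandoning the linear induction altogether: one builds the entire $\kappa$-cotree of \emph{all} possible lifts at every stage, with limits in $\Ccal$ at limit heights, observes that \emph{every} branch yields a $\kappa$-point, and only afterwards selects a good branch non-constructively. The selection uses excluded middle: if every branch trivialized the witness at some height $<\!\kappa$, those heights would assemble into a $<\!\kappa$-cotree whose multicomposite is $J$-covering, forcing the witness into $S$ already in the sheaf and contradicting $S\notin J$. If you want to keep your linear schedule, you need an additional hypothesis making limit stages harmless, such as injectivity of the comparison map \eqref{eq:cosmall}; without something of that kind the construction does not close up at the first limit ordinal.
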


Espíndola and Kanalas' proof, while being categorical in nature, does not seem to follow Deligne's original strategy beyond inspiration. In parallel with these developments, the community revolving around topos theory has tried to understand the limits of Deligne's original argument and its possible generalization. Quite independently the authors of this paper and Tim Campion conjectured that every \textit{locally finitely presentable topos could have enough points}. Some evidence of discussion around this idea can be found in a collection of MathOverflow questions \cite{401429,338862,417780,371473,347107}. This conjecture finds its motivations in a number of examples, which we may list below: 

\begin{itemize}
    \item coherent topoi are locally finitely presentable (\cite[D3.3.12]{elephant2}), and the gap between these two notions seems quite small, at least by a superficial analysis,
    \item presheaf topoi are often not coherent and yet they are always locally finitely presentable, and they are the easiest example of topos with enough points,
    \item Hoffmann-Lawson duality \cite{hofmann1978spectral} from the theory of locales seems to support the conjecture that exponentiable topoi (which include locally finitely presentable topoi) have enough points.
\end{itemize}

In an early version of \cite{rogers2021supercompactly} the second author found a flawed proof of the conjecture (see the discussion in the comments of \cite{401429}; 
Marc Hoyois pointed out the error), and this collaboration stems from an attempt to correct that mistake. Before proceeding further with discussing our contribution, we can summarise that while we were able to prove that a large class of locally finitely presentable toposes have enough points, since the original preprint of this work was released a claimed counterexample to the general conjecture has emerged (see \Cref{rem:Marques}).% , which suggests that our results are sharp.
%our analysis would suggest that these methods cannot suffice to reach a proof of the conjecture, if any such may exist; they do place rather stringent requirements on any potential counterexample, though.

\subsection*{Our contribution and structure of the paper}

% Our main conceptual contribution is to change point of view on Deligne's construction of enough points and relativise it from \textit{a topos having enough points} to \textit{the family of topoi with enough points is closed under a certain type of geometric embeddings}. But of course we will need to prepare the ground in order to fully display our point of view. \idl{remove philosophy, go to the point}

In Section~\ref{collages} we briefly recall the notion of collage of a profunctor and establish the diagrams featuring in the subsequent discussion, such as the following:
% https://q.uiver.app/#q=WzAsNSxbMiwxLCJYIl0sWzAsMSwicCJdLFsxLDAsIlxcY2RvdCJdLFsxLDIsIlxcY2RvdCJdLFs0LDEsIlkiXSxbMSwwLCJ4IiwwLHsibGFiZWxfcG9zaXRpb24iOjcwfV0sWzIsMywiIiwxLHsic3R5bGUiOnsiYm9keSI6eyJuYW1lIjoiZG90dGVkIn0sImhlYWQiOnsibmFtZSI6Im5vbmUifX19XSxbMCw0LCJnIiwyLHsib2Zmc2V0IjoyfV0sWzAsNCwiZiIsMCx7Im9mZnNldCI6LTJ9XV0=
\[
\begin{tikzcd}[ampersand replacement=\&]
\& \cdot \\
p \&\& Y \&\& X \\
\& \cdot
\arrow["z"{description, pos=0.7}, from=2-1, to=2-3]
\arrow[dotted, no head, from=1-2, to=3-2]
\arrow["g"', shift right=2, from=2-3, to=2-5]
\arrow["f", shift left=2, from=2-3, to=2-5]
\end{tikzcd}
\]
Collages offer a convenient setting where points (on the left) can interact with objects (on the right) of a topos in a common category.

Section~\ref{sec:Deligne} is the main section of the paper. After introducing the notion of \textit{improvement} (\Cref{def:improves}), which is designed to isolate the central idea of Deligne's proof, we prove our core theorem (which we reproduce below in an easy-to-read form).

\begin{thm*}[\ref{thm:improvement}]
Let $j: \Fcal \rightarrowtail \Ecal$ be an inclusion of topoi. Suppose that every point $p$ can be improved. If $\Ecal$ has enough points, then $\Fcal$ has enough points.
\end{thm*}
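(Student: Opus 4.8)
The plan is to reduce the conclusion ``$\Fcal$ has enough points'' to a concrete detection property for monomorphisms, transport the relevant data along the inclusion $j$ into $\Ecal$, exploit the enough points available there, and finally invoke the improvement hypothesis to pull the detecting point back inside the subtopos. First I would record the standard reformulation: a Grothendieck topos $\Fcal$ has enough points exactly when the inverse-image functors $q^*$, with $q$ ranging over the points of $\Fcal$, jointly reflect isomorphisms. Since each $q^*$ preserves finite limits and colimits, and since in a topos a morphism is invertible as soon as it is both monic and epic, this joint conservativity follows once we know that every monomorphism $m\colon A \rightarrowtail B$ of $\Fcal$ which fails to be an isomorphism is sent by some $q^*$ to a non-isomorphism. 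Thus it suffices to fix such an $m$ and produce a single point of $\Fcal$ detecting it.

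Next I would push $m$ forward along the inclusion. As $j$ is an inclusion, the direct image $j_*\colon \Fcal \to \Ecal$ is fully faithful, and being a right adjoint it preserves monomorphisms; hence $j_* m$ is a monomorphism of $\Ecal$ which, by full faithfulness of $j_*$, is again not an isomorphism. Because $\Ecal$ has enough points by hypothesis, there is a point $p\colon \Set \to \Ecal$ with $p^*(j_* m)$ not an isomorphism, so that $p$ detects $m$ inside the ambient topos $\Ecal$.

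The heart of the argument is the passage from this $p$ to a genuine point of $\Fcal$, and this is precisely what the improvement hypothesis (\Cref{def:improves}) is designed to deliver: applied to $p$ together with the configuration $(Y,X,f,g,z)$ recorded in the collage diagram that witnesses the failure of $m$ to be invertible, it yields a point $q$ of $\Fcal$ whose composite $jq$ is comparable to $p$ through the profunctor $z$, in such a way that this witness survives. Concretely, since $j$ is an inclusion the counit gives $j^* j_* \cong \id_{\Fcal}$, whence $q^* m \cong (jq)^*(j_* m)$; so once improvement guarantees that $(jq)^*$ still fails to invert $j_* m$, the point $q$ detects $m$ in $\Fcal$. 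As $m$ was an arbitrary non-invertible monomorphism, the points of $\Fcal$ jointly reflect isomorphisms and $\Fcal$ has enough points.

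I expect the decisive difficulty to lie entirely in this last step. One must check both that improvement is always available for a point that detects a non-isomorphism, and that it genuinely transports the detection across the inclusion. The delicate matter is the compatibility of the two inverse-image functors $p^*$ and $(jq)^*$ along the comparison $z$: one has to ensure that collapsing the collage of $z$ down to an honest point of $\Fcal$ does not erase the configuration witnessing that $m$ is proper. Everything surrounding it is formal; it is here that the real content isolated by \Cref{def:improves} must be brought to bear.
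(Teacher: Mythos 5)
Your reduction to detecting non-invertible monomorphisms, the transport along $j_*$, and the use of enough points of $\Ecal$ to find a point $p$ detecting $j_*m$ are all fine and match the opening of the paper's argument (the paper phrases it via a parallel pair $f,g:j_*(Y)\rightrightarrows j_*(X)$ and faithfulness rather than conservativity, but these are interchangeable). The gap is in the step you yourself flag as decisive: you assert that the improvement hypothesis, ``applied to $p$ together with the configuration,'' \emph{yields a point $q$ of $\Fcal$}. It does not. By Definition \ref{def:improves}, an improvement of $p$ with respect to a single datum $(x,y,m,w)$ is merely another point $p'$ of the ambient topos $\Ecal$, equipped with a morphism $n:p'\to p$ and a lift $v$ of that one element $w$ through that one $j$-dense mono $m$, such that $n;x\neq n;y$. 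A point of $\Ecal$ factors through $\Fcal$ only when it is projective with respect to \emph{all} $j$-dense monomorphisms and \emph{all} elements of the relevant inverse images simultaneously (Corollary \ref{crly:injchar}); a single improvement repairs exactly one such failure.

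The entire content of the theorem is therefore the accumulation argument you omit. The paper reduces to the generic $j$-dense mono $J\rightarrowtail\Omega$, well-orders $p^*(\Omega)$ via Zorn's Lemma, and runs a transfinite recursion: at successor stages it applies an improvement (composing with the earlier ones via Lemma \ref{lem:improvementscompose}), and at limit stages it takes the colimit in $\pt(\Ecal)$, checking that $x\neq y$ survives because inverse images of filtered colimits of points are computed as directed colimits of sets. Even after exhausting $p^*(\Omega)$ one is not done, since the new point $p_{(1)}$ has new elements in $p_{(1)}^*(\Omega)$ for which projectivity may fail; the procedure must be repeated $\omega$ times and a further colimit taken before Corollary \ref{crly:injchar} applies. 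Without this transfinite bookkeeping there is no point of $\Fcal$ at all, so your proof as written does not go through; what remains to be supplied is not a ``compatibility check'' but the main construction.
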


Section~\ref{sec:recover} is devoted to recovering all the existing theorems concerning topoi with enough points using our technology. \Cref{cor:deligne} recovers the original theorem due to Deligne, \Cref{thm:supermakkai} implies Makkai and Reyes' result \Cref{cor:makkai}, while \Cref{thm:superespindola} generalizes Espindola and Kanalas' result \Cref{cor:espindola}. Our generalization involves introducing the notion of ($\kappa$-)\textit{warp}, mildly constrained data generating a Grothendieck topology (\Cref{defn:basis}) and the corresponding notion of ($\kappa$-)\textit{woven} site (Definitions \ref{defn:woven} and \ref{defn:kwoven}).

\begin{thm*}[\ref{thm:superespindola} and \ref{thm:pbfree}]
Let $\kappa$ be a regular cardinal. Let $(\Ccal,J)$ be a $\kappa$-woven site where $\Ccal$ is a category with limits of $<\kappa$-cochains. Then $\Sh(\Ccal,J)$ has enough $\kappa$-points\footnote{Points preserving limits of size $<\!\kappa$.}.
\end{thm*}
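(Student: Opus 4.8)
The plan is to exhibit $\Sh(\Ccal,J)$ as a subtopos of a topos that visibly has enough $\kappa$-points, and then discharge the hypotheses of \Cref{thm:improvement}. The natural ambient topos is the presheaf topos $\PSh(\Ccal)=[\Ccal\op,\Set]$: there is the canonical geometric inclusion $j\colon \Sh(\Ccal,J)\rightarrowtail\PSh(\Ccal)$ coming from sheafification, and $\PSh(\Ccal)$ has enough $\kappa$-points since the evaluation functors $\ev_C\colon\PSh(\Ccal)\to\Set$ are jointly conservative and preserve all limits, hence a fortiori limits of size $<\!\kappa$. By \Cref{thm:improvement} it then suffices to verify that every point of $\PSh(\Ccal)$ can be improved along $j$ in the sense of \Cref{def:improves}; since we want $\kappa$-points in the conclusion, the improvement must moreover carry a $\kappa$-point to a $\kappa$-point of $\Sh(\Ccal,J)$. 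Recall that a point of $\PSh(\Ccal)$ is a flat functor $F\colon\Ccal\to\Set$, that it is a $\kappa$-point exactly when its inverse image preserves $<\!\kappa$-limits, and that, in the collage picture of Section~\ref{collages}, improving $F$ means deforming it into a $J$-continuous flat functor --- one sending each generating cover to a jointly surjective family --- without destroying the element $z$ that $F$ used to separate the parallel pair $f,g\colon Y\to X$.

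I would construct this improvement as a transfinite iteration of length $\kappa$. At each stage one records the \emph{defects} of the current functor: pairs consisting of a generating cover from the $\kappa$-warp together with an element at its apex not yet lifted along the cover. Each defect is repaired by a local modification that forces the offending cover to become jointly surjective; this repair is available precisely because the $\kappa$-woven hypothesis supplies generating covers of controlled size $\leq\kappa$ enjoying the closure properties of \Cref{defn:woven} and \Cref{defn:kwoven}. At limit stages below $\kappa$ one passes to the evident transition colimit, and the regularity of $\kappa$ guarantees that a suitably scheduled (dovetailed) enumeration addresses every defect within $\kappa$ steps, so that the process stabilises at a functor that is $J$-continuous, i.e.\ a point of $\Sh(\Ccal,J)$ separating $f$ and $g$ just as $F$ did.

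The heart of the matter --- and the main obstacle --- is to check two properties of the stabilised functor at once. First, that it is genuinely $J$-continuous: this is a fairness argument ensuring that repairs do not create fresh defects faster than they are cleared, and it is exactly here that the regularity of $\kappa$ together with the size bound on covers is consumed. Second, and more delicately, that the improved functor remains a $\kappa$-point, i.e.\ still preserves $<\!\kappa$-limits, since forcing covers to be surjective can a priori break flatness. I expect this to hinge on a careful inspection of which limits in $\Ccal$ the repair and limit-stage steps actually require: the claim underlying \Cref{thm:pbfree} is that the only limits the construction ever consumes are limits of $<\!\kappa$-cochains, so that the weaker closure hypothesis on $\Ccal$ suffices to keep the functor $\kappa$-flat throughout. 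With both points established, \Cref{thm:improvement} yields that $\Sh(\Ccal,J)$ has enough $\kappa$-points.
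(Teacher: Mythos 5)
Your overall framing (embed $\Sh(\Ccal,J)$ into $\PSh(\Ccal)$, which has enough $\kappa$-points via the representables, and then improve points) matches the paper's, but the engine you propose --- a linear transfinite iteration of length $\kappa$ that repairs one defect at a time and takes colimits at limit stages --- is exactly the strategy the paper shows \emph{cannot} work once $\kappa>\omega$. The obstruction is not the one you flag (that the stabilised functor might lose $\kappa$-flatness), nor is it a scheduling issue that fairness or dovetailing can cure. It is that at a limit stage the accumulated point may simply cease to separate the parallel pair: each single repair step is an improvement and so preserves $x\neq y$ by definition, but the (co)limit of an infinite chain of improvements need not be an improvement, because the two elements can become identified in the colimit. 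Example \ref{exa:binaryfan} in the paper is a concrete site where every branch chosen ``greedily'' one step at a time collapses $x=y$ at the first limit ordinal, even though a separating branch exists. Your proposal never addresses how the choice of repair at each successor stage is to be made so that the limit survives, and no local criterion suffices: whether a given repair is ``safe'' depends on the entire infinite future of the construction.

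The paper's actual proof of Theorem \ref{thm:superespindola} circumvents this by abandoning the linear induction altogether. It constructs, entirely inside $\Ccal$, the full $\kappa$-cotree of \emph{all} candidate improvements (predecessors of each node given by a dovetailed enumeration of pulled-back $\Bcal$-presieves, limit nodes given by limits of $<\!\kappa$-cochains in $\Ccal$), so that every stage remains a representable point --- which is also what guarantees $\kappa$-flatness of the final colimit for free, resolving your second worry without any ``careful inspection.'' It then invokes the \emph{converse} clause in the definition of a $\kappa$-warp (multicomposites of $<\!\kappa$-cotrees of $\Bcal$-presieves are $J$-covering) together with excluded middle: if every branch eventually equalized $x$ and $y$, the failure points would assemble into a $<\!\kappa$-cotree whose $J$-covering multicomposite fails to distinguish $x$ from $y$, a contradiction. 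This non-constructive selection of a surviving branch is the step your linear scheme has no analogue of, and without it the proof does not go through. (The elimination of pullbacks in Theorem \ref{thm:pbfree} is then a separate, easy reduction: close $\Ccal$ under pullbacks in $\PSh(\Ccal)$ and extend the warp by the presieves of all maps from old objects.)
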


%We also manage to deliver a slicker proof of their theorem concerning the presheaf nature of some classifying $\kappa$-topoi (\Cref{presheaftype}).
We devote a further subsection (\Cref{ssec:locales}) to discussing some localic implications of our work. There we recover Heckman's theorem concerning countably generated locales having enough points. 

We push the boundaries of our results further in Section~\ref{sec:extend}, first showing that the hypothesis of pullbacks can be eliminated from Theorems \ref{thm:supermakkai} and \ref{thm:superespindola}. This section is also where we give our best attempt to prove that locally finitely presentable topoi have enough points.

\begin{thm*}[\ref{cor:lfpcountable}]
A locally finitely presentable topos whose class of finitely presentable objects is essentially countable has enough points.
\end{thm*}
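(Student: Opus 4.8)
The plan is to realise $\Ecal$ as the topos of sheaves on its own finitely presentable objects and then feed the resulting site into the pullback-free, finitary instance of our main theorem. Concretely, since $\Ecal$ is locally finitely presentable the full subcategory $\Ecal_{\mathrm{fp}}$ of finitely presentable objects is an (essentially small) dense generator, so the restricted Yoneda embedding exhibits an equivalence $\Ecal \simeq \Sh(\Ecal_{\mathrm{fp}}, J_{\mathrm{can}})$, where $J_{\mathrm{can}}$ is the canonical topology induced on $\Ecal_{\mathrm{fp}}$ by the inclusion, its covering sieves being the effective-epimorphic ones (those exhibiting their target as a colimit in $\Ecal$). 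By hypothesis $\Ecal_{\mathrm{fp}}$ is essentially countable, so after choosing representatives we may take the underlying category $\Ccal := \Ecal_{\mathrm{fp}}$ of this site to be countable.

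The decisive simplification is that we work at $\kappa = \omega$. The finitely presentable objects of a topos are closed under finite colimits but need not be closed under finite limits, so $\Ccal$ will in general lack pullbacks; this is exactly the situation addressed by the pullback-free form \Cref{thm:pbfree} of \Cref{thm:superespindola}. Moreover, for $\kappa = \omega$ the completeness hypothesis ``$\Ccal$ has limits of $<\kappa$-cochains'' is vacuous, since the limit of a finite cochain is given by its initial vertex; and an $\omega$-point, preserving finite limits, is nothing but an ordinary point. Thus it suffices to exhibit $(\Ccal, J_{\mathrm{can}})$ as an $\omega$-woven site in the sense of \Cref{defn:kwoven}.

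This last verification is where I expect the real work, though countability tames it. Because finitely presentable objects are in particular finitely generated, every jointly-epimorphic (hence every canonical covering) family of an object $C \in \Ccal$ admits a finite jointly-epimorphic subfamily; thus $J_{\mathrm{can}}$ is generated by finite effective-epimorphic families with finitely presentable domains. Since $\Ccal$ is countable there are only countably many such families, so this generating datum is countable and provides a candidate $\omega$-warp (\Cref{defn:basis}). One then checks that it satisfies the warp closure conditions and that the induced site is $\omega$-woven. Granting this, \Cref{thm:pbfree} applies with $\kappa = \omega$ and yields that $\Sh(\Ccal, J_{\mathrm{can}}) \simeq \Ecal$ has enough $\omega$-points, hence enough points.

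The main obstacle is the weaving step: one must show not merely that $J_{\mathrm{can}}$ is countably generated but that it admits generating data of the constrained, warp-like shape required by \Cref{defn:kwoven}. A subsidiary point to pin down is that essential countability of the \emph{class} of objects, together with local smallness, indeed yields a countable \emph{site}; if the relevant hom-sets are not a priori countable one must first argue that a countable cofinal family of finite covers can be chosen regardless. These are precisely the places where the argument would break for an uncountable locally finitely presentable topos, consistent with our being unable to settle the general conjecture.
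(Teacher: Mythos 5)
Your general strategy is sound up to a point: presenting $\Ecal$ as sheaves on $\Ecal_{\mathrm{fp}}$ with the induced (jointly-epimorphic) topology is correct, your observation that finite presentability forces every covering family to admit a finite jointly-epimorphic subfamily is correct (via the directed union of images of finite subfamilies and the fact that $\Hom(C,-)$ preserves that filtered colimit), and routing around the absence of pullbacks through \Cref{thm:pbfree} at $\kappa=\omega$ is the right instinct. However, the step you label ``subsidiary'' is in fact the crux, and it is a genuine gap. To apply \Cref{thm:pbfree} you must exhibit an $\omega$-woven site, i.e.\ a warp $\Bcal(C)$ that is \emph{countable at every object}. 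Your candidate warp is the collection of all finite effective-epimorphic families with finitely presentable domains, and its countability requires countably many such families per object, hence countable hom-sets in $\Ecal_{\mathrm{fp}}$. The hypothesis of the statement only bounds the number of objects up to isomorphism; it says nothing about the cardinality of the hom-sets. Your proposed repair --- choosing ``a countable cofinal family of finite covers'' --- is not justified: the conditions of \Cref{defn:basis} require every covering sieve to be refined by a multicomposite of warp presieves, and the refinement order on finite jointly-epimorphic families over a fixed object has no reason to admit a countable coinitial subset when the hom-sets are uncountable. As written, your argument proves the weaker statement in which $\Ecal_{\mathrm{fp}}$ is assumed countable \emph{as a category}, which is essentially the Makkai--Reyes regime.

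The paper's proof is structured precisely to avoid counting the finite covers. It takes a site for $\Ecal$ whose topology is generated by sieves with finite \emph{final} presieves, closes it under pullbacks as in \Cref{thm:pbfree}, and then splits the covering sieves of the resulting site into two classes: the finite ones, and the single ``minimal'' filtered sieve on each new pullback object consisting of all morphisms from objects of the original site. The finite sieves are handled with no cardinality hypothesis whatsoever, because every point preserves finite limits and so \Cref{prop:someimprovement} (with $\Phi$ the finite diagrams) supplies improvements at every stage of the Zorn-powered induction of \Cref{thm:improvement}, exactly as in the proof of \Cref{cor:deligne}. The countability of the class of finitely presentable objects is spent only on scheduling improvements with respect to the minimal sieves during the first $\omega$ stages, while the point is still representable and hence preserves all limits; after that, \Cref{prop:lowerboundssuffice} reduces the remaining minimal-sieve obligations to ones that are automatically satisfied. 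If you want to salvage your reduction to \Cref{thm:pbfree} as a black box, you would need to either add countability of hom-sets to the hypotheses or find an argument producing a countable warp from countably many objects alone; absent that, you need the hybrid induction.
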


Finally we present a $2$-dimensional analysis of our core theorem (and of Deligne's argument in general). Besides offering a more conceptual interpretation of the theorem, we show that topoi with enough points are closed under closed embeddings:

\begin{thm*}[\ref{thm:closedinc}]
A closed subtopos of a topos with enough points has enough points.
\end{thm*}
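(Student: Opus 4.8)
The plan is to deploy the core theorem \ref{thm:improvement}. Write the closed inclusion as $c \colon \Fcal \rightarrowtail \Ecal$ and recall that a closed subtopos is cut out by a subterminal object $U \rightarrowtail 1$ of $\Ecal$, whose complementary open subtopos I denote $o \colon \Ucal \rightarrowtail \Ecal$, with inverse image $o^*$ and direct image $o_*$. Since $\Ecal$ is assumed to have enough points, by \ref{thm:improvement} it suffices to check that every point $p \colon \Set \to \Ecal$ can be improved along $c$. My strategy is to reduce this to the elementary observation that the points of $\Ecal$ are partitioned according to their behaviour on $U$.

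First I would record the dichotomy. For any point $p$, the object $p^* U$ is a subterminal of $\Set$, hence either $\emptyset$ or $1$; unwinding the closed (resp.\ open) Lawvere--Tierney topology, $p$ factors through $\Fcal$ exactly when $p^* U \cong \emptyset$ and through $\Ucal$ exactly when $p^* U \cong 1$, so every point of $\Ecal$ lies over precisely one of the two complementary subtoposes. The key lemma is that restriction to the open part annihilates everything coming from the closed part: $o^* c_* \cong \Delta 1$, the constant functor at the terminal object (this is the recollement relating complementary open and closed subtoposes, as computed by Artin gluing). Consequently, if $p^* U \cong 1$, so that $p \cong o \circ \bar p$ for a point $\bar p$ of $\Ucal$, then $p^* c_* F \cong \bar p^{\,*} o^* c_* F \cong 1$ for every $F \in \Fcal$; such a point sees every object of $\Fcal$ as terminal and therefore carries no information distinguishing objects of $\Fcal$. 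This is exactly what makes the improvement step succeed: a point with $p^* U \cong 1$ witnesses nothing relevant to $\Fcal$, while a point with $p^* U \cong \emptyset$ already factors through $\Fcal$ and is improved by itself.

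Concretely, and independently of the improvement formalism, the underlying argument runs as follows. A family of points is enough precisely when the inverse images jointly detect proper subobjects (a non-isomorphism factors as an epimorphism followed by a monomorphism, and both failures of invertibility become failures of some subobject to be an isomorphism, via kernel pairs), so it suffices to separate a proper subobject $A \subsetneq B$ in $\Fcal$. As $c_*$ is fully faithful it preserves properness, whence $c_* A \subsetneq c_* B$ in $\Ecal$; since $\Ecal$ has enough points there is a point $p$ with $p^* c_* A \subsetneq p^* c_* B$. By the key lemma this forces $p^* U \cong \emptyset$, for otherwise $p^* c_* A \cong 1 \cong p^* c_* B$. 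Hence $p = c \circ q$ for a point $q$ of $\Fcal$, and using $c^* c_* \cong \id$ we obtain $q^* A \cong p^* c_* A \subsetneq p^* c_* B \cong q^* B$, so $q$ separates $A$ from $B$ and $\Fcal$ has enough points.

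The main obstacle is the key lemma $o^* c_* \cong \Delta 1$ together with the precise identification of which points factor through the closed subtopos; everything else is bookkeeping. In the framework of the paper this content is what must be matched against \ref{def:improves} to certify that every point can be improved, so the work lies in translating the recollement identity into a verification of the improvement hypothesis rather than in any transfinite construction, in sharp contrast with the Deligne-type arguments of the earlier sections.
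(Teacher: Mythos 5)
Your argument is correct, but it takes a genuinely different route from the paper's. The paper works $2$-categorically: it pulls back the surjection $\Set^K \to \Ecal$ witnessing enough points along the closed inclusion $c$, uses the Beck--Chevalley isomorphism for proper (in particular closed) inclusions from Moerdijk--Vermeulen to see that the pulled-back geometric morphism is still a surjection, and then invokes the fact that a closed subtopos of $\Set^K$ is again of the form $\Set^{K'}$. Your proof instead works pointwise, resting on the recollement identity $o^*c_* \cong \Delta 1$ and the classical dichotomy that a $\Set$-point $p$ satisfies $p^*U \cong \emptyset$ or $p^*U \cong 1$ for the subterminal $U$ cutting out the complementary open, hence factors through exactly one of the two complementary subtoposes; a point of $\Ecal$ detecting a proper subobject $c_*A \subsetneq c_*B$ is then forced into the closed part. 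Both proofs are non-constructive in exactly the same place: your appeal to excluded middle for subterminals of $\Set$ is the pointwise shadow of the paper's claim that subtoposes of the copower $\Set^K$ are copowers $\Set^{K'}$. What the paper's formulation buys is the relative version over an arbitrary base $\Scal$, where the dichotomy fails and one only obtains ``enough closed sub-points''; what yours buys is elementarity, avoiding the pullback of toposes and the proper-morphism machinery altogether. Your opening framing via Theorem \ref{thm:improvement} is also sound but essentially degenerate here: when $p^*U \cong 1$ the set $p^*(j_*(X))$ is a singleton, so the improvement hypothesis is vacuous, and when $p^*U \cong \emptyset$ the point is already projective and improves itself, so no transfinite induction is actually performed --- the direct argument of your third paragraph is the cleaner packaging.
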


\section{Collage diagrams} \label{collages}

This section recalls the notion of \textit{collage}, which we will employ as a technical framework in which to organise the proof of our core theorem (\ref{thm:improvement}). After the first definitions, we take the opportunity to tailor the discussion and present some examples in order to introduce the reader to our point of view on Deligne's theorem.

\subsection{Definitions}

% When $\Ecal = [\Ccal\op,\Set]$ is a presheaf topos, observe that $\Ccal$ embeds into $\Ecal$ via the Yoneda embedding, while $\Ccal\op$ embeds into $\pt(\Ecal)$ by sending each object $c$ to the representable point $p_c$ with $p^*_c(X) = X(c)$. Equivalently, we can identify $\pt(\Ecal)$ with $\Ind(\Ccal\op)$ in this scenario, and the latter is naturally a subcategory of $[\Ccal,\Set]$, so this is a restriction of the Yoneda embedding for $\Ccal\op$. Putting these facts together, we see that $\ev$ restricts to the profunctor,

\begin{para}[Evaluation pairings] \label{constr:evpair} \label{exa:presheaves}
    Recall that for $\Ecal$ a topos, we have an evaluation pairing
    \[\ev_{\Ecal} : \pt(\Ecal) \times \Ecal \to \Set\]
    which is defined by $(p,X) \mapsto p^*(X)$. This pairing will play an important role in the paper, so let us spell our more in detail a concrete example. Let $\Ecal = [\Ccal\op,\Set]$. Then $\pt(\Ecal)$ is (equivalent to) the ind-completion $\Ind(\Ccal\op)$. Throughout, we shall denote by
\begin{align*}
\yo: \Ccal &\to [\Ccal\op,\Set] \\
\yoco: \Ccal &\to \Pro(\Ccal) = \Ind(\Ccal\op)\op \subseteq [\Ccal,\Set]\op
\end{align*}
the Yoneda embedding and (a restriction of) its dual. Restricting along these embeddings, we find that $\ev(\yoco(C),\yo(D)) \cong \Ccal(C,D)$, recovering the hom profunctor on $\Ccal$.
\end{para}

% \begin{exa}
% \label{exa:groups}
% Let $\Ecal = \Set[\Tbb_{grp}]$ be the classifying topos for the theory of groups. Then $\Ecal \simeq [\Gcal,\Set]$, where $\Gcal$ is the category of finitely presented groups. Meanwhile, $\pt(\Ecal)$ is (equivalent to) the category of groups. Considering a finitely presented group $G$ and an arbitrary group $H$ with corresponding point $p_H$, we have that $\ev(\yo(G),p_H) = \Hom(G,H)$, where the latter is the set of group homomorphisms from $G$ to $H$. In particular, $\ev$ restricts to the hom-functor on $\Gcal$ when we take $H$ to also be finitely presented.
% \end{exa}

%More generally, the classifying topos $\Ecal$ of an essentially algebraic theory is the category of copresheaves on its category of finitely presented models $\Gcal$, so that it is natural to consider the Yoneda embedding $\Gcal \to \Ecal\op$. For this reason, we employ the following choice of variance for our \textbf{collage diagrams} to illustrate the pairing $\ev$. These diagrams are constructed as follows:
 
\begin{defn}[Collage]
\label{def:collage}
Let $\mathsf{P}: \Fcal\op \times \Ecal \to \Set$ be a profunctor between locally small categories. Recall that the \textbf{collage} of $\mathsf{P}$ is the category $\Coll(\mathsf{P})$ whose collection of objects is $\ob(\Fcal) \sqcup \ob(\Ecal)$ and whose collection of morphisms is $\mor(\Fcal) \sqcup \mor(\Ecal) \sqcup \bigsqcup_{F,E}\mathsf{P}(F,E)$.
\end{defn}

\begin{defn}[Collage diagrams]
\label{def:collagediagram}
Let $\mathsf{P}$ be as in Definition \ref{def:collage}. A \textbf{collage diagram} for $\mathsf{P}$ is simply a diagram in $\Coll(\mathsf{P})$, which we present as an ordinary diagram equipped with a vertical dotted line separating the diagram into a left and right part such that:
\begin{itemize}
    \item To the left of the dotted line are objects and morphisms of $\Fcal$,
    \item To the right of the dotted line are objects and morphisms of $\Ecal$,
    \item Morphisms from an object $F$ on the left to an object $E$ on the right correspond to elements of $\mathsf{P}(F,E)$, and
    \item No morphisms from right to left are permitted.
\end{itemize}
Unless stated otherwise, a diagram implicitly carries the assertion that it \textbf{commutes}, in the sense that each region on each side of the dotted line is a commuting diagram in the respective categories, while a region crossing the dotted line commutes in the sense that applying the relevant components of natural transformations on each side of the line to the elements presented by the arrows crossing the line produces identical results.
\end{defn}

\begin{rem}
\label{rem:collagepres}
Let $\mathsf{P}: \Fcal\op \times \Ecal \to \Set$ be a profunctor between locally small categories, and call $\Coll(\mathsf{P})$ its collage, then we get two canonical functors as described below.
% https://q.uiver.app/#q=WzAsMyxbMiwwLCJcXEVjYWwiXSxbMCwwLCJcXEZjYWwiXSxbMSwxLCJcXENvbGwoXFxtYXRoc2Z7UH0pIl0sWzAsMiwiaV97XFxFY2FsfSIsMCx7ImN1cnZlIjotMn1dLFsxLDIsImlfe1xcRmNhbH0iLDIseyJjdXJ2ZSI6Mn1dXQ==
\[\begin{tikzcd}[ampersand replacement=\&]
	\Fcal \&\& \Ecal \\
	\& {\Coll(\mathsf{P})}
	\arrow["{i_{\Ecal}}", curve={height=-12pt}, from=1-3, to=2-2]
	\arrow["{i_{\Fcal}}"', curve={height=12pt}, from=1-1, to=2-2]
\end{tikzcd}\]
It is easy to see that these functors are both fully faithful. Moreover, $i_{\Fcal}$ creates limits and $i_{\Ecal}$ creates colimits.
\end{rem}

While we have given a general definition here, we will exclusively work with collage diagrams for $\ev_{\Ecal}$, viewed as a profunctor $(\pt(\Ecal)\op)\op \times \Ecal \to \Set$, for various choices of $\Ecal$.
% I WILL IMPROVE THE APPEARANCE OF COLLAGE DIAGRAMS ONCE WE HAVE FINISHED THE PAPER.

\begin{exa}[Visualizing some diagrams in the collage]
\label{exa:presheaves2}
Continuing \Cref{exa:presheaves}, let $\Ecal = [\Ccal\op,\Set]$. Given a commuting square in $\Ccal$ (as in the LHS below), we have a corresponding collage diagram for $\ev_{\Ecal}$ (as in the RHS below):
% https://q.uiver.app/#q=WzAsMTAsWzUsMSwiXFx5byhEKSJdLFszLDEsIlxceW9jbyhDKSJdLFszLDIsIlxceW9jbyhDJykiXSxbNSwyLCJcXHlvKEQnKSJdLFs0LDAsIlxcY2RvdCJdLFs0LDMsIlxcY2RvdCJdLFswLDEsIkMiXSxbMSwxLCJEIl0sWzAsMiwiQyciXSxbMSwyLCJEJyJdLFsxLDIsIlxceW9jbyhmKSIsMl0sWzEsMCwieCIsMCx7ImxhYmVsX3Bvc2l0aW9uIjo3MH1dLFswLDMsIlxceW8oZykiXSxbMiwzLCJ5IiwyLHsibGFiZWxfcG9zaXRpb24iOjcwfV0sWzQsNSwiIiwwLHsic3R5bGUiOnsiYm9keSI6eyJuYW1lIjoiZG90dGVkIn0sImhlYWQiOnsibmFtZSI6Im5vbmUifX19XSxbNiw3LCJ4Il0sWzgsOSwieSIsMl0sWzYsOCwiZiIsMl0sWzcsOSwiZyJdLFsxOCwxMCwiXFxtYXBzdG8iLDEseyJsYWJlbF9wb3NpdGlvbiI6NDAsInNob3J0ZW4iOnsic291cmNlIjoyMCwidGFyZ2V0IjoyMH0sInN0eWxlIjp7ImJvZHkiOnsibmFtZSI6Im5vbmUifSwiaGVhZCI6eyJuYW1lIjoibm9uZSJ9fX1dXQ==
\[\begin{tikzcd}[ampersand replacement=\&]
	\&\&\&\& \cdot \\
	C \& D \&\& {\yoco(C)} \&\& {\yo(D)} \\
	{C'} \& {D'} \&\& {\yoco(C')} \&\& {\yo(D')} \\
	\&\&\&\& \cdot
	\arrow[""{name=0, anchor=center, inner sep=0}, "{\yoco(f)}"', from=2-4, to=3-4]
	\arrow["x"{pos=0.7}, from=2-4, to=2-6]
	\arrow["{\yo(g)}", from=2-6, to=3-6]
	\arrow["y"'{pos=0.7}, from=3-4, to=3-6]
	\arrow[dotted, no head, from=1-5, to=4-5]
	\arrow["x", from=2-1, to=2-2]
	\arrow["y"', from=3-1, to=3-2]
	\arrow["f"', from=2-1, to=3-1]
	\arrow[""{name=1, anchor=center, inner sep=0}, "g", from=2-2, to=3-2]
	\arrow["\mapsto"{description, pos=0.4}, draw=none, from=1, to=0]
\end{tikzcd}\]
where we write $\yoco(f)$ for the natural transformation between the representable points corresponding to $f$ and by a mild abuse of notation we denote use the same symbol $x$ to denote an element of $\ev(\yoco(C),\yo(D))$ and the corresponding morphism $C \to D$ (and similarly for $y$). In this context, all of the arrows in the collage diagram represent morphisms of essentially the same type, but viewed from different perspectives according to the extension of $\Ccal$ into which they have been promoted.
\end{exa}

\begin{rem}[Alternative handedness]
\label{rem:siteconvention}
We have chosen a site-theoretic perspective on the topos and its category of points. Unfortunately, this is dual to the `semantic' perspective in which $\pt(\Ecal)$ is treated as a category of models of a theory and $\Ecal$ is presented as a subtopos of the category of copresheaves on a category of finitely presentable models. From that perspective, working in the dual of the collages we present would be more intuitive. As such, we caution the reader to take care when working in the left-hand side of our collage diagrams, and invite them to dualize when needed.
\end{rem}

% \begin{rem}
% \label{rem:siteconvention}
% We have chosen a `syntactic' perspective on the topos and its category of points. Unfortunately, this is dual to the `syntactic' or site-theoretic perspective which is an equally valid choice. To compensate for this, we use the convention that when discussing a site $(\Ccal,J)$, we work in $\Gcal := \Ccal\op$, and present $J$-covering sieves as cosieves in $\Gcal$, since this is how they will appear on the left-hand side of collage diagrams. As such, $\yo$ will denote the dual Yoneda embedding $\Gcal \to \Sh(\Ccal,J)\op$. Meanwhile, we will continue to denote the embedding $\Gcal \to \Ind(\Gcal)$ by $p_{-}$ and $i: \pt(\Sh(\Ccal,J)) \hookrightarrow \Ind(\Gcal)$ for the inclusion, since every point can be presented as an object of the ind-completion of $\Ccal\op = \Gcal$.
% \end{rem}

\subsection{Expressing conditions in the collage}

To accustom the reader to reasoning in $\Coll(\ev_{\Ecal})$, we identify how properties of sheaves and points manifest themselves in this setting, beginning with limits and colimits.

\begin{lem}[Lexity]
\label{lem:limpres}
Let $\Ecal$ be a topos and $p \in \pt(\Ecal)$. Let $\Dbb$ be a small category, $F: \Dbb \to \Ecal$ a diagram, $X$ an object of $\Ecal$ and $\lambda: X \Rightarrow F$ a cone over $F$ with apex $X$. Then $p$ sends $\lambda$ to a limit cone if and only if for every cone $x_{-}$ over $F$ with apex $p$ in $\Coll(\ev_{\Ecal})$ there exists a unique factoring element $x^* \in p^*(X)$:
% https://q.uiver.app/#q=WzAsNyxbMywxLCJYIl0sWzIsMywiRmQiXSxbNCwzLCJGZCciXSxbMCwyLCJwIl0sWzMsMywiXFxjZG90cyJdLFsxLDAsIlxcY2RvdCJdLFsxLDQsIlxcY2RvdCJdLFswLDEsIlxcbGFtYmRhX2QiLDEseyJsYWJlbF9wb3NpdGlvbiI6NDB9XSxbMCwyLCJcXGxhbWJkYV97ZCd9IiwxXSxbMywwLCJ4IiwxLHsic3R5bGUiOnsiYm9keSI6eyJuYW1lIjoiZGFzaGVkIn19fV0sWzMsMiwieF97ZCd9IiwxLHsibGFiZWxfcG9zaXRpb24iOjMwfV0sWzUsNiwiIiwxLHsic3R5bGUiOnsiYm9keSI6eyJuYW1lIjoiZG90dGVkIn0sImhlYWQiOnsibmFtZSI6Im5vbmUifX19XSxbMywxLCJ4X2QiLDEseyJsYWJlbF9wb3NpdGlvbiI6NzB9XV0=
\[\begin{tikzcd}[ampersand replacement=\&]
	\& \cdot \\
	\&\&\& X \\
	p \\
	\&\& Fd \& \cdots \& {Fd'} \\
	\& \cdot
	\arrow["{\lambda_d}"{description, pos=0.4}, from=2-4, to=4-3]
	\arrow["{\lambda_{d'}}"{description}, from=2-4, to=4-5]
	\arrow["x^*"{description}, dashed, from=3-1, to=2-4]
	\arrow["{x_{d'}}"{description, pos=0.3}, from=3-1, to=4-5]
	\arrow[dotted, no head, from=1-2, to=5-2]
	\arrow["{x_d}"{description, pos=0.7}, from=3-1, to=4-3]
\end{tikzcd}\]
\end{lem}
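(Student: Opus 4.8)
The plan is to reduce the statement to the elementary description of limits in $\Set$ via the canonical comparison map, after translating the collage-theoretic data into ordinary set-theoretic cones. First I would record the canonical comparison morphism
\[
\kappa \colon p^*(X) \longrightarrow \lim_{\Dbb}(p^* \circ F), \qquad x^* \longmapsto \bigl(p^*(\lambda_d)(x^*)\bigr)_{d \in \Dbb},
\]
induced by the image cone $p^*(\lambda)$; this is well defined precisely because $\lambda$ is a cone and $p^*$ is a functor. By the defining universal property of the limit in $\Set$, the cone $p^*(\lambda)$ is a limit cone if and only if $\kappa$ is a bijection, so the task is to match the right-hand condition of the lemma with the bijectivity of $\kappa$.

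Next I would decode both sides of the collage picture. By the definition of $\Coll(\ev_{\Ecal})$, a morphism $p \to Fd$ in the collage is an element of $\ev_{\Ecal}(p, Fd) = p^*(Fd)$, so a cone $x_-$ over $F$ with apex $p$ amounts to a family $(x_d)_d$ with $x_d \in p^*(Fd)$, and the commutativity convention for collage diagrams (\Cref{def:collagediagram}) forces $p^*(F\delta)(x_d) = x_{d'}$ for every $\delta \colon d \to d'$ in $\Dbb$. Such compatible families are exactly the elements of $\lim_{\Dbb}(p^* \circ F)$. In the same vein, a factoring element $x^* \in p^*(X)$ satisfying $\lambda_d \circ x^* = x_d$ in the collage unwinds to $p^*(\lambda_d)(x^*) = x_d$ for all $d$, that is, to $\kappa(x^*) = (x_d)_d$. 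Thus a factoring element for the cone $x_-$ is precisely a $\kappa$-preimage of the corresponding element of the limit.

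With this dictionary in place I would conclude directly: requiring that every cone $x_-$ admit a factoring element is requiring that $\kappa$ be surjective, and requiring that this factoring element be unique is requiring that $\kappa$ be injective; hence the right-hand condition holds for all cones exactly when $\kappa$ is a bijection, which is the assertion that $p^*(\lambda)$ is a limit cone. I expect no genuine obstacle, as the content is a careful translation rather than a computation. The one point deserving attention is the shift of apex: a cone with apex the \emph{point} $p$ in the collage corresponds, under the identification $\ev_{\Ecal}(p,-) = p^*(-)$, to a cone with apex the terminal set over $p^* \circ F$, and it is exactly the fact that in $\Set$ the comparison map to a limit is detected at the level of such global elements that lets the elementwise condition capture the full limit universal property.
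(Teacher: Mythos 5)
Your proof is correct, and it is exactly the translation the paper has in mind: the paper states this lemma without proof, treating it as immediate from the definitions, and your argument (cones with apex $p$ in the collage are elements of $\lim_{\Dbb}(p^*\circ F)$, and existence-plus-uniqueness of the factoring element is bijectivity of the canonical comparison $p^*(X)\to\lim_{\Dbb}(p^*\circ F)$) is the obvious way to fill that in. Your closing remark about why element-level detection suffices in $\Set$ is the one point genuinely worth making explicit, and you have made it correctly.
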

In particular, since points preserve finite limits by definition, this is always true for finite $\Dbb$. It follows that finite limits in $\Ecal$ remain finite limits in the collage $\Coll(\ev_{\Ecal})$. We provide a weaker dual result.

\begin{lem}[Cocontinuity]
\label{lem:colimpres}
Let $\Ecal$, $p$, $\Dbb$, $F$ and $X$ be as in Lemma \ref{lem:limpres}. Let $\gamma: F \Rightarrow X$ be a cone under $F$ with nadir $X$. Then $p^*$ sends $\gamma$ to a jointly epic cone if and only if for any $x \in p^*(X)$ there exists an index $d \in \Dbb$ such that $x$ factors through $\gamma_d$ in the collage diagram:
% https://q.uiver.app/#q=WzAsNyxbMywzLCJYIl0sWzIsMSwiRmQnIl0sWzQsMSwiRmQiXSxbMCwzLCJwIl0sWzMsMSwiXFxjZG90cyJdLFsxLDAsIlxcY2RvdCJdLFsxLDQsIlxcY2RvdCJdLFsxLDAsIlxcZ2FtbWFfe2QnfSIsMSx7ImxhYmVsX3Bvc2l0aW9uIjo3MH1dLFsyLDAsIlxcZ2FtbWFfe2R9IiwxXSxbMywwLCJ4IiwxXSxbMywyLCJ4JyIsMSx7ImxhYmVsX3Bvc2l0aW9uIjo0MCwic3R5bGUiOnsiYm9keSI6eyJuYW1lIjoiZGFzaGVkIn19fV0sWzUsNiwiIiwxLHsic3R5bGUiOnsiYm9keSI6eyJuYW1lIjoiZG90dGVkIn0sImhlYWQiOnsibmFtZSI6Im5vbmUifX19XV0=
\[\begin{tikzcd}[ampersand replacement=\&]
	\& \cdot \\
	\&\& {Fd'} \& \cdots \& Fd \\
	\\
	p \&\&\& X \\
	\& \cdot
	\arrow["{\gamma_{d'}}"{description, pos=0.7}, from=2-3, to=4-4]
	\arrow["{\gamma_{d}}"{description}, from=2-5, to=4-4]
	\arrow["x"{description}, from=4-1, to=4-4]
	\arrow["{x'}"{description, pos=0.4}, dashed, from=4-1, to=2-5]
	\arrow[dotted, no head, from=1-2, to=5-2]
\end{tikzcd}\]
%CONDITION FOR CONE TO BE SENT TO COLIMIT CONE
%We can construct a category of such factorizations $\mathsf{Fact}(F,x)$, whose objects are pairs $(d,x')$ such that $x = x' ; \gamma_{d'}$ and whose morphisms are morphisms of $\Dbb$ making the relevant triangle commute. Then $p^*$ sends $\gamma$ to a colimit cone if and only if there is a unique connected component in $\mathsf{Fact}(F,x)$ for every $x \in p^*(X)$.
\end{lem}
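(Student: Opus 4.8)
The plan is to unwind both sides of the biconditional into statements about the inverse image functor $p^*$ and to observe that they literally coincide, using only that epimorphisms in $\Set$ are exactly the surjections. There is no genuine obstacle here: the content of the lemma is a translation between the collage vocabulary and the vocabulary of $p^*$, so the work is entirely in making the dictionary explicit and checking it lines up.

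First I would recall, from \Cref{def:collage} and \Cref{def:collagediagram}, that a morphism in $\Coll(\ev_{\Ecal})$ from the left-hand object $p$ to a right-hand object $E$ is by definition an element of $\ev_{\Ecal}(p,E) = p^*(E)$. Thus an element $x \in p^*(X)$ is the same datum as an arrow $x \colon p \to X$ in the collage, and a candidate factoring element $x' \in p^*(Fd)$ is the same datum as an arrow $x' \colon p \to Fd$. Next I would interpret the commutation convention: by \Cref{def:collagediagram}, the assertion that the triangle on vertices $p$, $Fd$, $X$ commutes --- that is, that $x$ factors as $\gamma_d$ composed with $x'$ in the collage --- means exactly $p^*(\gamma_d)(x') = x$ in $\Set$. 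Consequently, ``$x$ factors through $\gamma_d$ in the collage diagram'' is equivalent to $x \in \im(p^*(\gamma_d))$.

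Finally I would assemble these observations. A cocone $\{\, p^*(\gamma_d) \colon p^*(Fd) \to p^*(X) \,\}_{d \in \Dbb}$ in $\Set$ is jointly epic if and only if it is jointly surjective, i.e. $p^*(X) = \bigcup_{d \in \Dbb} \im(p^*(\gamma_d))$; this is the standard characterization of joint epimorphy in $\Set$ (if some $x$ lies in no image, two maps out of $p^*(X)$ differing only at $x$ witness the failure, and conversely maps agreeing on all images agree everywhere). Combining this with the previous step, joint epimorphy of $p^*(\gamma)$ says precisely that every $x \in p^*(X)$ factors through some $\gamma_d$ in the collage diagram, which is the right-hand condition. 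This proves the equivalence for an arbitrary cocone $\gamma$; note that no assumption that $\gamma$ is a colimit is needed.

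The only point warranting comment --- and the reason this is the \emph{weaker} dual of \Cref{lem:limpres} rather than a perfect mirror image --- is the asymmetry of the collage. Since \Cref{def:collagediagram} forbids morphisms from the right to the left, the point $p$ can probe $X$ only through arrows \emph{into} it; this suffices to detect joint surjectivity (the mere existence of a factorization), but it cannot express the full universal property of a colimit, which would require testing maps out of $X$ against competing cocones and would place $p$ on the wrong side of the diagram. Hence we can only characterize joint epimorphy of $p^*(\gamma)$, in contrast to the exact preservation of limits recorded in \Cref{lem:limpres}.
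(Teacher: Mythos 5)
Your argument is correct and is exactly the dictionary-unwinding the paper has in mind: the paper states this lemma without proof, treating it as immediate from the definition of the collage (morphisms $p \to E$ are elements of $p^*(E)$, commutation across the dotted line means $p^*(\gamma_d)(x') = x$) together with the fact that jointly epic families in $\Set$ are jointly surjective. Your closing remark on why the asymmetry of the collage only lets one capture joint epimorphy rather than the full colimit universal property also matches the paper's framing of this as the ``weaker dual'' of the lexity lemma.
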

For any point $p$ in $\pt(\Ecal)$ we know that $p^*$ preserves small colimits and hence jointly epic families, which means that $p$ is always cone-projective in $\Coll(\ev_{\Ecal})$ for jointly epic families in $\Ecal$.

\begin{cor}[Points as a weak-projectives] \label{cor:coneinj}
Let $(\Ccal,J)$ be a site for the topos $\Ecal$. Then an object $p$ of $\Ind(\Ccal\op)$ restricts to a point of $\Ecal$ if and only if for each $J$-covering sieve $S = \{f_j :C \to C_j\}$ and element $x \in p^*(\yo(C))$, there exists an index $k$ and an element $x' \in p^*(\yo(C_k))$ completing the diagram:
% https://q.uiver.app/#q=WzAsNyxbMywzLCJcXHlvKEMpIl0sWzIsMSwiXFx5byhDXzEpIl0sWzQsMSwiXFx5byhDX2spIl0sWzAsMywicCJdLFszLDEsIlxcY2RvdHMiXSxbMSwwLCJcXGNkb3QiXSxbMSw0LCJcXGNkb3QiXSxbMSwwLCJcXHlvKGZfMSkiLDEseyJsYWJlbF9wb3NpdGlvbiI6NzB9XSxbMiwwLCJcXHlvKGZfaykiLDFdLFszLDAsIngiLDJdLFszLDIsIngnIiwyLHsibGFiZWxfcG9zaXRpb24iOjQwLCJzdHlsZSI6eyJib2R5Ijp7Im5hbWUiOiJkYXNoZWQifX19XSxbNSw2LCIiLDEseyJzdHlsZSI6eyJib2R5Ijp7Im5hbWUiOiJkb3R0ZWQifSwiaGVhZCI6eyJuYW1lIjoibm9uZSJ9fX1dXQ==
\[\begin{tikzcd}[ampersand replacement=\&]
	\& \cdot \\
	\&\& {\yo(C_j)} \& \cdots \& {\yo(C_k)} \\
	\\
	p \&\&\& {\yo(C)} \\
	\& \cdot
	\arrow["{\yo(f_j)}"{description, pos=0.7}, from=2-3, to=4-4]
	\arrow["{\yo(f_k)}"{description}, from=2-5, to=4-4]
	\arrow["x"', from=4-1, to=4-4]
	\arrow["{x'}"{pos=0.4}, dashed, from=4-1, to=2-5]
	\arrow[dotted, no head, from=1-2, to=5-2]
\end{tikzcd}\]

\end{cor}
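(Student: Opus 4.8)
The plan is to reduce the corollary to two facts: the classical description of the points of a sheaf subtopos, and the collage reformulation of joint epimorphy supplied by \Cref{lem:colimpres}.

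First I would recall where $\Ind(\Ccal\op)$ comes from. As noted in \Cref{exa:presheaves}, an object $p$ of $\Ind(\Ccal\op)$ is exactly a point of the presheaf topos $[\Ccal\op,\Set]$, and its inverse image restricts along $\yo$ to the associated flat functor $C \mapsto p^*(\yo(C))$. Writing $j : \Ecal = \Sh(\Ccal,J) \rightarrowtail [\Ccal\op,\Set]$ for the inclusion, the standard theory of points of a subtopos (Diaconescu's theorem together with its sheaf-theoretic refinement, e.g.\ \cite[VII]{sheavesingeometry} or \cite[C2.3]{elephant2}) states that $p$ restricts to a point of $\Ecal$ — equivalently, factors through $j$ — precisely when this flat functor is $J$-continuous: for every generating family $\{\yo(f_j) : \yo(C_j) \to \yo(C)\}$ of a $J$-covering sieve, $p^*$ must send it to a jointly epimorphic family of sets. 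This is the only non-formal ingredient.

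Second I would invoke \Cref{lem:colimpres} with $F$ the diagram indexed by the covering family and $\gamma = \{\yo(f_j)\}$ the cone with nadir $\yo(C)$: the lemma says that $p^*$ sends $\gamma$ to a jointly epic cone if and only if every $x \in p^*(\yo(C))$ factors, in the collage, through some $\yo(f_k)$ by an element $x' \in p^*(\yo(C_k))$ — which is precisely the diagram in the statement. Composing the two equivalences gives the result. The forward implication is moreover the content of the remark preceding the statement: the inverse image of a genuine point of $\Ecal$ is cocontinuous, hence automatically cone-projective for the jointly epic families that covering sieves become in $\Ecal$.

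The step I would handle with the most care is the translation between the sieve $S$ and the family fed to \Cref{lem:colimpres}. A $J$-covering sieve may be large, but $J$-continuity is unchanged on replacing a sieve by any family generating it, and the conclusion asks only for the existence of one factoring index, so closure of $S$ under precomposition is irrelevant to the joint-epimorphy condition; one may therefore index $F$ by a generating set of $S$. I would also keep in mind the handedness of \Cref{rem:siteconvention}: what the diagram actually uses is the family of maps with codomain $\yo(C)$, so I would align the indexing of $S$ with the arrows exactly as drawn rather than with the semantic dualization.
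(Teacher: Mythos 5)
Your argument is exactly the paper's: Diaconescu's theorem reduces the restriction of $p$ to $J$-flatness of the associated flat functor, i.e.\ covering sieves being sent to jointly epimorphic families, and \Cref{lem:colimpres} translates that into the factorization in the collage. The extra care about generating families versus sieves is a reasonable clarification but does not change the proof.
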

\begin{proof}
By Diaconescu's theorem, a point $p$ of $[\Ccal\op,\Set]$ restricts to a point of $\Sh(\Ccal,J)$ if and only if, when $p^*$ is composed with the Yoneda embedding $\Ccal \to [\Ccal\op,\Set]$ to produce a flat functor, the result is $J$-flat, meaning that $J$-covering sieves are sent to jointly epimorphic families. Applying Lemma \ref{lem:colimpres} produces the stated condition.
\end{proof}

% The fact that we can describe the category of points $\pt(\Ecal)$ as a cone-injectivity class in $\Ind(\Ccal\op)$ shows, amongst other things, that the category of points of a topos is an accessible category \cite[4.16]{adamekrosicky94}, and because all the maps \mr{which maps?} are between tiny objects, it is closed under directed colimits.

However, for greater generality, we can simplify this by considering the image of the covering family, which is to say the sieve viewed as a subobject $S \rightarrowtail \yo(C)$.

\begin{cor}[Points as projectives] \label{crly:injchar}
Let $(\Ccal,J)$ be a site for the topos $\Ecal$. Then an object $p$ of $\Ind(\Ccal\op)$ restricts to a point of $\Ecal$ if and only if for each $J$-covering sieve $S = \{f_j :C \to C_j\}$ and element $x \in p^*(\yo(C))$, there exists a (necessarily unique) element $x' \in p^*(S)$ completing the diagram:
% https://q.uiver.app/#q=WzAsNSxbMywyLCJcXHlvKEMpIl0sWzMsMSwiUyJdLFswLDIsInAiXSxbMSwwLCJcXGNkb3QiXSxbMSwzLCJcXGNkb3QiXSxbMSwwLCIiLDEseyJzdHlsZSI6eyJ0YWlsIjp7Im5hbWUiOiJtb25vIn19fV0sWzIsMCwieCIsMV0sWzIsMSwieCciLDEseyJzdHlsZSI6eyJib2R5Ijp7Im5hbWUiOiJkYXNoZWQifX19XSxbMyw0LCIiLDEseyJzdHlsZSI6eyJib2R5Ijp7Im5hbWUiOiJkb3R0ZWQifSwiaGVhZCI6eyJuYW1lIjoibm9uZSJ9fX1dXQ==
\[\begin{tikzcd}[ampersand replacement=\&]
	\& \cdot \\
	\&\&\& S \\
	p \&\&\& {\yo(C)} \\
	\& \cdot
	\arrow[tail, from=2-4, to=3-4]
	\arrow["x"{description}, from=3-1, to=3-4]
	\arrow["{x'}"{description}, dashed, from=3-1, to=2-4]
	\arrow[dotted, no head, from=1-2, to=4-2]
\end{tikzcd}\]
%This is a projectivity condition in $\Coll(\ev_{\Ecal})$, although it could equivalently be presented as an injectivity condition in $\pt(\PSh(\Ccal))$.
More generally, for $j:\Fcal \rightarrowtail \Ecal$ a point $p \in \pt(\Ecal)$ factors through $\Fcal$ if and only if for every $j$-dense\footnote{Beware that we conflate the inclusion of topoi and the local operator in $\Ecal$ corresponding to it, since there is little risk of confusion; a monomorphism $m$ is called \textit{$j$-dense} if $j_*j^*m$ is an isomorphism.} monomorphism $m: S \rightarrowtail T$ and element $x \in p^*(X)$, there is a factorization of $x$ through $m$ in $\Coll(\ev_{\Ecal})$:
\[\begin{tikzcd}[ampersand replacement=\&]
	\& \cdot \\
	\&\&\& S \\
	p \&\&\& T \\
	\& \cdot
	\arrow[tail, from=2-4, to=3-4, "m"]
	\arrow["x"{description}, from=3-1, to=3-4]
	\arrow["{x'}"{description}, dashed, from=3-1, to=2-4]
	\arrow[dotted, no head, from=1-2, to=4-2]
\end{tikzcd}\]
\end{cor}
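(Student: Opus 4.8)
The plan is to treat the two statements in turn, deriving the first from \Cref{cor:coneinj} and obtaining the second from the universal property of a subtopos inclusion; in each case the real work is to translate the condition ``$p^*$ inverts a monomorphism'' into the displayed collage factorization.

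For the first equivalence, I would begin by observing that the sieve $S$, regarded as a subobject $S \rightarrowtail \yo(C)$, is exactly the union $\bigcup_j \im(\yo(f_j))$ of the images of the generating family in the subobject lattice of $\yo(C)$, equivalently the image of the induced map $\coprod_j \yo(C_j) \to \yo(C)$. Since $p^*$ is the inverse image of a geometric morphism it preserves finite limits and all small colimits, hence it preserves both images and this union, so that $p^*(S) = \bigcup_j \im(p^*\yo(f_j)) \rightarrowtail p^*(\yo(C))$ in $\Set$. Consequently an element $x \in p^*(\yo(C))$ factors through $p^*(S)$ if and only if it lies in the image of some $p^*(\yo(f_k))$, i.e.\ factors through some member of the family; this is precisely the cone-projectivity condition of \Cref{cor:coneinj}. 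The two conditions therefore coincide, and uniqueness of the lift $x'$ is automatic because $p^*$, being left exact, sends $S \rightarrowtail \yo(C)$ to a monomorphism in $\Set$.

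For the general statement I would first record that, as $p^*$ is left exact, $p^*(m)$ is always a monomorphism, so the displayed factorization holds for every $x$ if and only if $p^*(m)$ is in addition surjective, i.e.\ an isomorphism; the collage condition is thus exactly the assertion that $p^*$ inverts every $j$-dense mono. One direction is then immediate from the footnote: if $p \cong j \circ q$ for a point $q$ of $\Fcal$ then $p^* \cong q^* j^*$, and $m$ is $j$-dense exactly when $j_*j^*m$, hence $j^*m$ (since $j_*$ is fully faithful), is invertible, so $p^*$ inverts it.

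The converse is the crux, and I expect it to be the main obstacle. I would invoke the universal property of the subtopos inclusion $j:\Fcal \rightarrowtail \Ecal$: a geometric morphism into $\Ecal$ factors through $\Fcal$ precisely when its inverse image inverts the class of maps inverted by the reflector $j^*$. To see that inverting $j$-dense monos already suffices, I would factor the reflection unit $\eta_X : X \to j_*j^*X$ as a regular epi $X \twoheadrightarrow sX$ onto the $j$-separated reflection, followed by a $j$-dense mono $sX \rightarrowtail j_*j^*X$. A left exact, cocontinuous $p^*$ inverting $j$-dense monos inverts the second map directly; it inverts the first because the kernel pair of $X \twoheadrightarrow sX$ is the $j$-closure $R \rightarrowtail X \times X$ of the diagonal, and $\Delta_X \rightarrowtail R$ being $j$-dense forces $p^*(R) \cong p^*(X)$, collapsing the coequalizer. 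Hence $p^*$ inverts every $\eta_X$, so $p^* \cong (p^*j_*)\circ j^*$ and the universal property yields the required point of $\Fcal$, concretely $q^* \cong p^*j_*$, with $j \circ q \cong p$. The first statement then reappears as the special case $\Ecal = [\Ccal\op,\Set]$, $\Fcal = \Sh(\Ccal,J)$, the $j$-dense monos there being generated, under the operations $p^*$ preserves, by the covering sieves $S \rightarrowtail \yo(C)$.
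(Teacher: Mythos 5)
Your proof is correct and follows essentially the same route the paper intends: the first equivalence is obtained, exactly as the paper indicates, by passing from the covering family to its joint image $S \rightarrowtail \yo(C)$ and using that $p^*$ preserves images and unions to reduce to Corollary~\ref{cor:coneinj}. For the general clause (which the paper states without proof) you correctly identify it as the standard characterization of factorization through a subtopos via inversion of $j$-dense monomorphisms, and your sketch of that argument (separated reflection, density of the closure of the diagonal) is sound.
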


\begin{para}
Given a subtopos $\Fcal \rightarrowtail \Ecal$ the projectivity condition of Corollary \ref{crly:injchar} ensures that we get a fully faithful functor between collages,
\[\Coll(\ev_{\Fcal}) \hookrightarrow \Coll(\ev_{\Ecal}),\]
where fullness on the point side comes from the fact that inclusions of topoi are representably fully faithful in the $2$-category of topoi \cite[Lemma 2.1.3]{Endomodels}.
% https://q.uiver.app/#q=WzAsNCxbMSwxLCJcXENvbGwoXFxtYXRoc2Z7ZXZ9X3tcXEVjYWx9KSJdLFswLDEsIlxcQ29sbChcXG1hdGhzZntldn1fe1xcRmNhbH0pIl0sWzAsMCwiXFxtYXRoc2Z7cHR9KFxcRmNhbCkiXSxbMSwwLCJcXG1hdGhzZntDUHJvan1ee1xcdGV4dHtsZXh9fShcXG1hdGhjYWx7TX1fe1xcdGV4dHtzaWV2ZX19KSJdLFsxLDAsIiIsMix7InN0eWxlIjp7InRhaWwiOnsibmFtZSI6Im1vbm8ifX19XSxbMiwxLCJpX3tcXEZjYWx9IiwwLHsic3R5bGUiOnsidGFpbCI6eyJuYW1lIjoibW9ubyJ9fX1dLFsyLDMsIlxcc2ltZXEiLDAseyJzdHlsZSI6eyJib2R5Ijp7Im5hbWUiOiJkYXNoZWQifX19XSxbMywwLCIiLDEseyJzdHlsZSI6eyJ0YWlsIjp7Im5hbWUiOiJtb25vIn19fV1d
% \[\begin{tikzcd}
% 	{\mathsf{pt}(\Fcal)} & {\mathsf{CProj}^{\text{lex}}(\Mcal_{\text{sieve}})} \\
% 	{\Coll(\mathsf{ev}_{\Fcal})} & {\Coll(\mathsf{ev}_{\Ecal})}
% 	\arrow[tail, from=2-1, to=2-2]
% 	\arrow["{i_{\Fcal}}", tail, from=1-1, to=2-1]
% 	\arrow["\simeq", dashed, from=1-1, to=1-2]
% 	\arrow[tail, from=1-2, to=2-2]
% \end{tikzcd}\]
\end{para}

\section{Improving Deligne} \label{sec:Deligne}

\subsection{Improving points}

Deligne's proof is difficult for several reasons. Our contribution is to decompose it into two layers which can independently be understood and generalized using well-established categorical techniques.

It is straightforward to express the condition of $\Ecal$ having enough points in the context of the collage $\Coll(\ev_{\Ecal})$.

\begin{lem}[Expressing \textit{enough points} in the collage]
\label{lem:manifestenoughpoints}
A topos $\Ecal$ has enough points if and only if for any parallel pair of distinct arrows $f\neq g: X \rightrightarrows Y$ in $\Ecal$ there exists a point $p$ and $z \in p^*(Y)$ such that in the (non-commutative) collage diagram:
% https://q.uiver.app/#q=WzAsNSxbMiwxLCJYIl0sWzAsMSwicCJdLFsxLDAsIlxcY2RvdCJdLFsxLDIsIlxcY2RvdCJdLFs0LDEsIlkiXSxbMSwwLCJ4IiwwLHsibGFiZWxfcG9zaXRpb24iOjcwfV0sWzIsMywiIiwxLHsic3R5bGUiOnsiYm9keSI6eyJuYW1lIjoiZG90dGVkIn0sImhlYWQiOnsibmFtZSI6Im5vbmUifX19XSxbMCw0LCJnIiwyLHsib2Zmc2V0IjoyfV0sWzAsNCwiZiIsMCx7Im9mZnNldCI6LTJ9XV0=
\begin{equation}
\label{eq:faithful}
\begin{tikzcd}[ampersand replacement=\&]
	\& \cdot \\
	p \&\& Y \&\& X \\
	\& \cdot
	\arrow["z"{description, pos=0.7}, from=2-1, to=2-3]
	\arrow[dotted, no head, from=1-2, to=3-2]
	\arrow["g"', shift right=2, from=2-3, to=2-5]
	\arrow["f", shift left=2, from=2-3, to=2-5]
\end{tikzcd}    
\end{equation}
the composites $x;f$ and $x;g$ are distinct. In fact, it is sufficient for this to be the case for $X$ restricted to a generating set of objects of $\Ecal$, such as the representable sheaves when $\Ecal = \Sh(\Ccal,J)$. For convenience, from here on we will refer to the composites of the morphisms in \eqref{eq:faithful} as $x=z;f$ and $y=z;g$.
\end{lem}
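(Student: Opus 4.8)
The plan is to recognise the displayed separation condition as \emph{faithfulness} of the joint inverse-image functor and then to invoke the classical equivalence between faithfulness and conservativity for a surjection of topoi. Recall that $\Ecal$ has enough points precisely when the family $\{p^* : \Ecal \to \Set\}_{p \in \pt(\Ecal)}$ is jointly conservative; equivalently, writing $\gamma^* = (p^*)_p : \Ecal \to \prod_{p}\Set$ for the inverse image of the canonical geometric morphism $\gamma : \coprod_{p}\Set \to \Ecal$, that $\gamma$ is a surjection. Now an element $(p^*f)_p$ of $\prod_p\Set(\gamma^*X,\gamma^*Y)$ differs from $(p^*g)_p$ exactly when some component does, i.e.\ when there is a point $p$ and an $x \in p^*(X)$ with $p^*(f)(x) \neq p^*(g)(x)$; so the condition in the statement — that every distinct parallel pair $f,g$ be separated in $\Coll(\ev_{\Ecal})$ as in \eqref{eq:faithful} — is nothing but faithfulness of $\gamma^*$. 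Thus the lemma is exactly the assertion that $\gamma^*$ is conservative if and only if it is faithful.

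For a self-contained argument I would prove the two implications directly. For conservativity $\Rightarrow$ faithfulness, given $f \neq g$ I would form the equaliser $e : E \rightarrowtail X$ of $f$ and $g$; since $f \neq g$, $e$ is not an isomorphism, so by hypothesis some $p^*(e)$ is not an isomorphism. As $p^*$ preserves finite limits, $p^*(E)$ is the equaliser of $p^*(f),p^*(g)$ in $\Set$, hence a proper subset of $p^*(X)$, and any $x \in p^*(X) \setminus p^*(E)$ supplies the required separating element. (This direction in fact needs only that $\gamma^*$ is left exact.)

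For faithfulness $\Rightarrow$ conservativity I would show that the family jointly reflects both monomorphisms and epimorphisms, whence it reflects isomorphisms, since in a topos an arrow is invertible iff it is monic and epic. If $h : A \to B$ is not monic, the two projections $\pi_1,\pi_2 : A \times_B A \rightrightarrows A$ of its kernel pair are distinct, so faithfulness separates them at some point $p$; since $p^*$ preserves kernel pairs this shows $p^*(h)$ is not monic. If $h$ is not epic, its image $\im(h) \rightarrowtail B$ is a proper subobject, so its classifying map $\chi : B \to \Omega$ differs from $\top_B = \top \circ {!_B}$, and faithfulness yields a point $p$ and $b \in p^*(B)$ with $p^*(\chi)(b) \neq p^*(\top_B)(b)$. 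Since $p^*$ preserves the pullback exhibiting $\im(h)$ as $\chi^{-1}(\top)$, this $b$ lies outside $p^*(\im(h))$; and since $p^*$ preserves $(\text{epi},\text{mono})$ factorisations, $p^*(\im(h)) = \im(p^*(h))$, so $p^*(h)$ is not surjective. Contrapositively, if every $p^*(h)$ is an isomorphism then $h$ is both monic and epic, hence invertible.

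Finally, the refinement to a generating set is routine: if $\{G_i\}$ jointly generates $\Ecal$ then distinct $f,g : X \rightrightarrows Y$ already differ after precomposition with some $u : G_i \to X$, and separating $f\circ u$ from $g\circ u$ separates $f$ from $g$; so it suffices to verify the condition with $X$ ranging over the $G_i$ (the representable sheaves when $\Ecal = \Sh(\Ccal,J)$). The one genuinely topos-theoretic subtlety — and the step I expect to require the most care — is the epimorphism half of faithfulness $\Rightarrow$ conservativity: because $p^*$ need not preserve the subobject classifier $\Omega$, one cannot reason through $\chi$ naively, but must instead use that $p^*$ preserves the defining pullback of $\im(h)$ together with the $(\text{epi},\text{mono})$ factorisation of $h$.
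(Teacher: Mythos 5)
Your proof is correct and follows the argument the paper implicitly relies on (the lemma is stated without proof as the standard reformulation of ``enough points'' as joint faithfulness of the inverse images, equivalent to joint conservativity since topoi are balanced and $p^*$ preserves kernel pairs, images and the classifying pullback). Your write-up simply supplies the details the paper omits, including the correct handling of the fact that $p^*$ need not preserve $\Omega$, and the generating-set refinement via precomposition is also as intended.
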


Suppose we have a topos $\Ecal$ with enough points and a subtopos $j: \Fcal \rightarrowtail \Ecal$. A typical situation will be that $\Ecal$ is a presheaf topos and $\Fcal$ a sheaf topos for a given site, but we work in a site-independent way in this section. Fixing a parallel pair of distinct morphisms of $\Fcal$, there is necessarily a point $p$ of $\Ecal$ which distinguishes their image under $j_*$. In order to construct a point of $\Fcal$ which achieves the same distinction, we iteratively refine $p$ until we attain a point satisfying the projectivity condition of Corollary \ref{crly:injchar}, while ensuring that each successive refinement distinguishes the chosen pair of morphisms. Characterizing a single such procedure brings us to the following definition.

\begin{defn}[Improvement]
\label{def:improves}
Let $\Ecal$ be a topos, $p$ a point of $\Ecal$ and $Z$ an object of $\Ecal$. Suppose we are given $x \neq y$ in $p^*(Z)$. Let $m: S \to T$ be a morphism and $w \in p^*(T)$. An \textbf{improvement of $p$ with respect to the data $(x,y,m,w)$} consists of a point $p'$ of $\Ecal$ and morphisms $v,n$ as in the following (non-commutative) collage diagram:
\begin{equation}
\label{eq:improvement}
% https://q.uiver.app/#q=WzAsNyxbMCwyLCJwIl0sWzAsMSwicCciXSxbMiwxLCJTIl0sWzIsMiwiVCJdLFsxLDAsIlxcY2RvdCJdLFsxLDQsIlxcY2RvdCJdLFsyLDMsIloiXSxbMSwwLCJuIiwyLHsic3R5bGUiOnsiYm9keSI6eyJuYW1lIjoiZGFzaGVkIn19fV0sWzIsMywibSJdLFs0LDUsIiIsMCx7InN0eWxlIjp7ImJvZHkiOnsibmFtZSI6ImRvdHRlZCJ9LCJoZWFkIjp7Im5hbWUiOiJub25lIn19fV0sWzAsMywidyIsMSx7ImxhYmVsX3Bvc2l0aW9uIjo3MH1dLFsxLDIsIncnIiwxLHsibGFiZWxfcG9zaXRpb24iOjcwLCJzdHlsZSI6eyJib2R5Ijp7Im5hbWUiOiJkYXNoZWQifX19XSxbMCw2LCJ4IiwxLHsibGFiZWxfcG9zaXRpb24iOjcwLCJvZmZzZXQiOi0xfV0sWzAsNiwieSIsMSx7ImxhYmVsX3Bvc2l0aW9uIjo3MCwib2Zmc2V0IjoyfV1d
\begin{tikzcd}[ampersand replacement=\&]
	\& \cdot \\
	{p'} \&\& S \\
	p \&\& T \\
	\&\& Z \\
	\& \cdot
	\arrow["n"', dashed, from=2-1, to=3-1]
	\arrow["m", from=2-3, to=3-3]
	\arrow[dotted, no head, from=1-2, to=5-2]
	\arrow["w"{description, pos=0.7}, from=3-1, to=3-3]
	\arrow["{w'}"{description, pos=0.7}, dashed, from=2-1, to=2-3]
	\arrow["x"{description, pos=0.7}, shift left, from=3-1, to=4-3]
	\arrow["y"{description, pos=0.7}, shift right=2, from=3-1, to=4-3]
\end{tikzcd}
\end{equation}
such that $v;m = n;w$ but $n;x \neq n;y$. As a shorthand for `$(p,n,v)$ is an improvement of $p$ with respect to $(x,y,m,w)$', we write
\[\impr{(p',n,v)}{p}{(x,y,m,w)}.\]
\textit{We will almost exclusively be concerned with the case where we have an inclusion of topoi $j: \Fcal \rightarrowtail \Ecal$, $Z = j_*(X)$ for some $X \in \Fcal$ and $m$ is a $j$-dense monomorphism.}
\end{defn}

If $p$ is already projective with respect to $m$ then we can take $n$ to be the identity on $p$ and $v$ the factoring morphism of Corollary \ref{crly:injchar}. The following two lemmas are straightforward compositionality results for improvements that will serve us extensively.
\begin{lem}[Compositionality of improvements]
\label{lem:improvementscompose}
Consider the following diagram:
% https://q.uiver.app/#q=WzAsMTAsWzMsMywiVCJdLFszLDIsIlMiXSxbMCwzLCJwIl0sWzEsMCwiXFxjZG90Il0sWzEsNCwiXFxjZG90Il0sWzAsMiwicCciXSxbMyw0LCJqXyooWCkiXSxbMCwxLCJwJyciXSxbNCwxLCJTJyJdLFs0LDMsIlQnIl0sWzEsMCwibSJdLFsyLDAsInciLDEseyJsYWJlbF9wb3NpdGlvbiI6NzB9XSxbMyw0LCIiLDEseyJzdHlsZSI6eyJib2R5Ijp7Im5hbWUiOiJkb3R0ZWQifSwiaGVhZCI6eyJuYW1lIjoibm9uZSJ9fX1dLFs1LDIsIm4iLDIseyJzdHlsZSI6eyJib2R5Ijp7Im5hbWUiOiJkYXNoZWQifX19XSxbNSwxLCJ2IiwxLHsibGFiZWxfcG9zaXRpb24iOjcwLCJzdHlsZSI6eyJib2R5Ijp7Im5hbWUiOiJkYXNoZWQifX19XSxbMiw2LCJ5IiwxLHsibGFiZWxfcG9zaXRpb24iOjcwLCJvZmZzZXQiOjJ9XSxbMiw2LCJ4IiwxLHsibGFiZWxfcG9zaXRpb24iOjcwfV0sWzgsOSwibSciXSxbMiw5LCJ3JyIsMSx7ImN1cnZlIjoyfV0sWzcsNSwibiciLDIseyJzdHlsZSI6eyJib2R5Ijp7Im5hbWUiOiJkYXNoZWQifX19XSxbNyw4LCJ2JyIsMSx7InN0eWxlIjp7ImJvZHkiOnsibmFtZSI6ImRhc2hlZCJ9fX1dXQ==
\[\begin{tikzcd}[ampersand replacement=\&]
	\& \cdot \\
	{p''} \&\&\&\& {S'} \\
	{p'} \&\&\& S \\
	p \&\&\& T \& {T'} \\
	\& \cdot \&\& {j_*(X)}
	\arrow["m", from=3-4, to=4-4]
	\arrow["w"{description, pos=0.7}, from=4-1, to=4-4]
	\arrow[dotted, no head, from=1-2, to=5-2]
	\arrow["n"', dashed, from=3-1, to=4-1]
	\arrow["v"{description, pos=0.7}, dashed, from=3-1, to=3-4]
	\arrow["y"{description, pos=0.7}, shift right=2, from=4-1, to=5-4]
	\arrow["x"{description, pos=0.7}, from=4-1, to=5-4]
	\arrow["{m'}", from=2-5, to=4-5]
	\arrow["{w'}"{description}, curve={height=12pt}, from=4-1, to=4-5]
	\arrow["{n'}"', dashed, from=2-1, to=3-1]
	\arrow["{v'}"{description}, dashed, from=2-1, to=2-5]
\end{tikzcd}\]
If $\impr{(p',n,v)}{p}{(x,y,m,w)}$ and $\impr{(p'',n',v')}{p'}{(n;x,n;y,m',n;w')}$ then $p''$ is an improvement of $p$ with respect to both pieces of data, in the sense that both $\impr{(p'',n';n,n';v)}{p}{(x,y,m,w)}$ and $\impr{(p'',n';n,v')}{p}{(x,y,m',w')}$.

Dually, if $\impr{(p',n,v)}{p}{(x,y,m,w)}$ and $\impr{(p'',n',v')}{p'}{(n;x,n;y,m',v)}$:
% https://q.uiver.app/#q=WzAsOSxbMywzLCJUIl0sWzMsMiwiUyJdLFswLDMsInAiXSxbMSwwLCJcXGNkb3QiXSxbMSw0LCJcXGNkb3QiXSxbMCwyLCJwJyJdLFszLDQsImpfKihYKSJdLFswLDEsInAnJyJdLFszLDEsIlMnIl0sWzEsMCwibSJdLFsyLDAsInciLDEseyJsYWJlbF9wb3NpdGlvbiI6NzB9XSxbMyw0LCIiLDEseyJzdHlsZSI6eyJib2R5Ijp7Im5hbWUiOiJkb3R0ZWQifSwiaGVhZCI6eyJuYW1lIjoibm9uZSJ9fX1dLFs1LDIsIm4iLDIseyJzdHlsZSI6eyJib2R5Ijp7Im5hbWUiOiJkYXNoZWQifX19XSxbNSwxLCJ2IiwxLHsibGFiZWxfcG9zaXRpb24iOjcwLCJzdHlsZSI6eyJib2R5Ijp7Im5hbWUiOiJkYXNoZWQifX19XSxbMiw2LCJ5IiwxLHsibGFiZWxfcG9zaXRpb24iOjcwLCJvZmZzZXQiOjJ9XSxbMiw2LCJ4IiwxLHsibGFiZWxfcG9zaXRpb24iOjcwfV0sWzcsNSwibiciLDIseyJzdHlsZSI6eyJib2R5Ijp7Im5hbWUiOiJkYXNoZWQifX19XSxbNyw4LCJ2JyIsMSx7ImxhYmVsX3Bvc2l0aW9uIjo3MCwic3R5bGUiOnsiYm9keSI6eyJuYW1lIjoiZGFzaGVkIn19fV0sWzgsMSwibSciXV0=
\[\begin{tikzcd}[ampersand replacement=\&]
	\& \cdot \\
	{p''} \&\&\& {S'} \\
	{p'} \&\&\& S \\
	p \&\&\& T \\
	\& \cdot \&\& {j_*(X)}
	\arrow["m", from=3-4, to=4-4]
	\arrow["w"{description, pos=0.7}, from=4-1, to=4-4]
	\arrow[dotted, no head, from=1-2, to=5-2]
	\arrow["n"', dashed, from=3-1, to=4-1]
	\arrow["v"{description, pos=0.7}, dashed, from=3-1, to=3-4]
	\arrow["y"{description, pos=0.7}, shift right=2, from=4-1, to=5-4]
	\arrow["x"{description, pos=0.7}, from=4-1, to=5-4]
	\arrow["{n'}"', dashed, from=2-1, to=3-1]
	\arrow["{v'}"{description, pos=0.7}, dashed, from=2-1, to=2-4]
	\arrow["{m'}", from=2-4, to=3-4]
\end{tikzcd}\]
then $\impr{(p'',n';n,v')}{p}{(x,y,m';m,w)}$.
\end{lem}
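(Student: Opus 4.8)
The plan is to treat both claims as routine diagram chases in $\Coll(\ev_{\Ecal})$, exploiting the fact that the two defining conditions of an improvement --- the straddling commuting square and the preserved distinction --- each behave well under composition. First I would unfold \Cref{def:improves}: to assert $\impr{(p',n,v)}{p}{(x,y,m,w)}$ is precisely to give the equality $v;m = n;w$ in $(p')^*(T)$ (a square crossing the dotted line, commuting in the sense of \Cref{def:collagediagram}) together with the inequality $n;x \neq n;y$ in $(p')^*(j_*(X))$. Since every arrow in sight is a genuine morphism of $\Coll(\ev_{\Ecal})$, ``composing'' improvements amounts to ordinary associative composition in that category, and the only real point to monitor is that the inequality survives.

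For the first statement I would verify the two asserted improvements separately. For $\impr{(p'',n';n,n';v)}{p}{(x,y,m,w)}$ the square follows by pasting: $(n';v);m = n';(v;m) = n';(n;w) = (n';n);w$, where the middle equality is the square of the first improvement. For $\impr{(p'',n';n,v')}{p}{(x,y,m',w')}$ the square reads $v';m' = n';(n;w') = (n';n);w'$, whose first equality is exactly the square of the second improvement, since that improvement was demanded with respect to the transported datum $n;w'$. In both cases the inequality required is the same, namely $(n';n);x \neq (n';n);y$, i.e. $n';(n;x) \neq n';(n;y)$; this is precisely the distinction guaranteed by the second improvement, because that improvement was taken relative to the pushed-forward pair $(n;x,n;y)$.

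The second statement is handled identically: the two straddling squares paste into a single rectangle over the composite $m';m$, giving $v';(m';m) = (v';m');m = (n';v);m = n';(v;m) = n';(n;w) = (n';n);w$ (here the first square contributes $v';m' = n';v$, its $w$-datum being $v$, and the second contributes $v;m = n;w$), while the inequality $(n';n);x \neq (n';n);y$ is again inherited verbatim from the second improvement.

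I do not anticipate a genuine obstacle; the content is bookkeeping, and the authors flag it as ``straightforward''. The single point deserving care is conceptual rather than computational: the reason the distinction propagates at all is that each successive improvement in the hypotheses is demanded relative to data in which the earlier comparison map $n$ has already been absorbed --- the pairs $(n;x,n;y)$ and the transported $w$-data ($n;w'$ in the first statement, $v$ in the second). Making this absorption explicit, and checking that the collage-commutation conventions of \Cref{def:collagediagram} legitimately license the pasting of two squares crossing the dotted line, is effectively the whole of the proof.
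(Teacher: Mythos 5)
Your proposal is correct and takes essentially the same approach as the paper, which dispatches this lemma with the remark that the proof is ``essentially by inspection of Definition~\ref{def:improves}''; your unfolding of the two defining conditions (the straddling square and the preserved inequality) and the pasting computations are exactly the inspection the authors have in mind.
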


\begin{lem}[Behavior of improvements under base change]
\label{lem:fewercospans}
Consider the following diagram, in which the right-hand square commutes:
\begin{equation}
\label{eq:fewercospans}
% https://q.uiver.app/#q=WzAsOSxbMywyLCJUIl0sWzMsMSwiUyJdLFswLDIsInAiXSxbMSwwLCJcXGNkb3QiXSxbMSwzLCJcXGNkb3QiXSxbMCwxLCJwJyJdLFszLDMsImpfKihYKSJdLFs0LDIsIlQnIl0sWzQsMSwiUyciXSxbMSwwLCJtIl0sWzIsMCwidyIsMSx7ImxhYmVsX3Bvc2l0aW9uIjo3MH1dLFszLDQsIiIsMSx7InN0eWxlIjp7ImJvZHkiOnsibmFtZSI6ImRvdHRlZCJ9LCJoZWFkIjp7Im5hbWUiOiJub25lIn19fV0sWzUsMiwibiIsMix7InN0eWxlIjp7ImJvZHkiOnsibmFtZSI6ImRhc2hlZCJ9fX1dLFs1LDEsInYiLDEseyJsYWJlbF9wb3NpdGlvbiI6NzAsInN0eWxlIjp7ImJvZHkiOnsibmFtZSI6ImRhc2hlZCJ9fX1dLFsyLDYsInkiLDEseyJsYWJlbF9wb3NpdGlvbiI6NzAsIm9mZnNldCI6Mn1dLFsyLDYsIngiLDEseyJsYWJlbF9wb3NpdGlvbiI6NzAsIm9mZnNldCI6LTF9XSxbMCw3LCJ1IiwxXSxbOCw3LCJtJyJdLFsxLDgsInQiLDFdXQ==
\begin{tikzcd}[ampersand replacement=\&]
	\& \cdot \\
	{p'} \&\&\& S \& {S'} \\
	p \&\&\& T \& {T'} \\
	\& \cdot \&\& {j_*(X)}
	\arrow["m", from=2-4, to=3-4]
	\arrow["w"{description, pos=0.7}, from=3-1, to=3-4]
	\arrow[dotted, no head, from=1-2, to=4-2]
	\arrow["n"', dashed, from=2-1, to=3-1]
	\arrow["v"{description, pos=0.7}, dashed, from=2-1, to=2-4]
	\arrow["y"{description, pos=0.7}, shift right=2, from=3-1, to=4-4]
	\arrow["x"{description, pos=0.7}, shift left, from=3-1, to=4-4]
	\arrow["u"{description}, from=3-4, to=3-5]
	\arrow["{m'}", from=2-5, to=3-5]
	\arrow["t"{description}, from=2-4, to=2-5]
\end{tikzcd}
\end{equation}
\begin{itemize}
    \item If $\impr{(p',n,v)}{p}{(x,y,m,w)}$, then $\impr{(p',n,v;t)}{p}{(x,y,m',w;u)}$.
    \item If the right-hand square is a pullback and $v$ is the pullback comparison map, then conversely $\impr{(p',n,v;t)}{p}{(x,y,m',w;u)}$ implies $\impr{(p',n,v)}{p}{(x,y,m,w)}$.
\end{itemize}
\end{lem}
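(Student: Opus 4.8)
The plan is to handle the two bullets separately. The first is a pure diagram chase using only that the right-hand square commutes together with the functoriality of the evaluation pairing $\ev_\Ecal$; the second additionally exploits that points preserve finite limits, which I will invoke through Lemma \ref{lem:limpres}.

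For the first bullet, suppose $\impr{(p',n,v)}{p}{(x,y,m,w)}$, so that $v; m = n; w$ and $n; x \neq n; y$. To see that $(p', n, v; t)$ improves $p$ relative to $(x, y, m', w; u)$ I must check the two defining conditions of \Cref{def:improves}. The inequality $n; x \neq n; y$ is literally the same and needs no argument. For the commutativity condition I would compute $(v; t); m' = v; (t; m') = v; (m; u) = (v; m); u = (n; w); u = n; (w; u)$, where the second equality is the commutativity of the right-hand square ($t; m' = m; u$), the central equality is the hypothesis $v; m = n; w$, and the outer equalities are associativity of the right action of $\Ecal$ on $\ev_\Ecal$ together with the naturality of the left action $\ev_\Ecal(n, -)$. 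This yields $(v;t); m' = n; (w; u)$, exactly as required.

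For the second bullet, assume the right-hand square is a pullback. An improvement $\impr{(p', n, a)}{p}{(x, y, m', w; u)}$ supplies a point $p'$, a morphism $n \colon p' \to p$, and a crossing morphism $a \colon p' \to S'$ with $a; m' = n; (w; u)$ and $n; x \neq n; y$; here I write $a$ for what the statement names $v; t$. The idea is to reconstruct $v \colon p' \to S$ from the universal property of the pullback. Since a pullback is a finite limit and points preserve finite limits, Lemma \ref{lem:limpres} guarantees that the square remains a pullback in $\Coll(\ev_\Ecal)$, so every cone over the cospan $S' \xrightarrow{m'} T' \xleftarrow{u} T$ with apex $p'$ factors uniquely through $S$. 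The pair $(a,\ n; w)$ is such a cone, because $a; m' = n; (w; u) = (n; w); u$ by the naturality used above; the resulting factoring element $v \in p'^*(S)$ then satisfies $v; m = n; w$ and $v; t = a$. The equation $v; m = n; w$ is precisely the commutativity condition for $\impr{(p', n, v)}{p}{(x, y, m, w)}$, the inequality $n; x \neq n; y$ is inherited, and $v; t = a$ confirms that $a$ is indeed of the announced form $v; t$.

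I expect the only delicate point to be verifying that the pair $(a, n; w)$ genuinely forms a cone in the second bullet, i.e. that its two legs agree after composing into $T'$; this is exactly where the commutativity of the square and the compatibility of the two actions of the profunctor $\ev_\Ecal$ must be combined. Once this is checked, the conclusion follows formally from the uniqueness clause of Lemma \ref{lem:limpres}, and no finiteness or size hypotheses beyond those already guaranteeing left exactness of points are needed.
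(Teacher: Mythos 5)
Your proof is correct and matches the paper's approach: the paper dispatches both bullets in one line, saying the lemma follows ``by inspection of Definition \ref{def:improves}, plus in the last instance the fact that pullbacks in $\Ecal$ remain pullbacks in $\Coll(\ev_{\Ecal})$ by Lemma \ref{lem:limpres}'', which is exactly the diagram chase and the limit-preservation argument you spell out. Your care in naming the crossing morphism $a$ and verifying the cone condition $a;m' = (n;w);u$ before invoking the universal property is a faithful (and welcome) expansion of the same argument.
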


The proofs of Lemmas \ref{lem:improvementscompose} and \ref{lem:fewercospans} are essentially by inspection of Definition \ref{def:improves}, plus in the last instance the fact that pullbacks in $\Ecal$ remain pullbacks in $\Coll(\ev_{\Ecal})$ by Lemma \ref{lem:limpres}.

\begin{thm}[Core theorem]
\label{thm:improvement}
Let $j: \Fcal \rightarrowtail \Ecal$ be an inclusion of topoi. Suppose that for every point $p$ and data $(x,y,m,w)$ as in Definition \ref{def:improves}, there exists an improvement of $p$ with respect to $(x,y,m,w)$. Assuming Zorn's Lemma and that $\Ecal$ has enough points, $\Fcal$ has enough points.
\end{thm}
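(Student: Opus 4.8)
The plan is to reduce, via \Cref{lem:manifestenoughpoints} and \Cref{crly:injchar}, to the problem of improving a single well-chosen point of $\Ecal$ into a point of $\Fcal$ while preserving a prescribed distinction, and then to run a transfinite saturation governed by Zorn's Lemma. First I would apply \Cref{lem:manifestenoughpoints} to $\Fcal$: it suffices to show that any parallel pair $a \neq b \colon X \rightrightarrows Y$ of $\Fcal$ is distinguished by some point of $\Fcal$. Pushing forward along the (fully faithful) direct image $j_*$ yields a distinct parallel pair $j_* a \neq j_* b$ in $\Ecal$; since $\Ecal$ has enough points, \Cref{lem:manifestenoughpoints} produces a point $p_0$ together with an element $z_0 \in p_0^*(j_* X)$ whose images $x_0 \coloneqq z_0 ; j_* a$ and $y_0 \coloneqq z_0 ; j_* b$ are distinct in $p_0^*(Z)$, where $Z \coloneqq j_*(Y)$. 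By \Cref{crly:injchar}, a point of $\Ecal$ restricts along $j$ to a point of $\Fcal$ precisely when it is projective with respect to every $j$-dense monomorphism. Thus it is enough to produce a point $p_\infty$ equipped with a morphism $p_0 \to p_\infty$ in $\pt(\Ecal)$ that keeps $x_0, y_0$ distinct and is projective with respect to all $j$-dense monos: its restriction $q$ then satisfies $q^*(a) \neq q^*(b)$, since under the identification $q^* \cong p_\infty^* \circ j_*$ the separation of $x_0, y_0$ is exactly the separation of $a, b$ on the image of $z_0$.

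To build $p_\infty$ I would organise the available improvements into a preorder and invoke Zorn's Lemma. Fix a site $(\Ccal,J)$ for $\Ecal$, so that $\pt(\Ecal)$ is realised inside $\Ind(\Ccal\op)$ as the flat $J$-continuous functors, a subcategory closed under filtered colimits. Consider the preorder $\mathcal{I}$ whose objects are morphisms $\phi \colon p_0 \to p$ in $\pt(\Ecal)$ that keep the distinction, meaning $\phi_Z(x_0) \neq \phi_Z(y_0)$ in $p^*(Z)$, ordered by factorization under $p_0$. Every chain $(\phi_i \colon p_0 \to p_i)_i$ in $\mathcal{I}$ admits an upper bound, namely the filtered colimit $p_\ast \coloneqq \colim_i p_i$: this is again a point by the closure just recalled, it carries a canonical map from $p_0$, and it still separates the images of $x_0, y_0$ because in a filtered colimit of sets two elements coincide only if they already coincide at some stage of the chain, which never happens along a distinction-preserving chain. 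Zorn's Lemma then yields a maximal object $p_{\max}$ of $\mathcal{I}$.

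It remains to show that maximality forces projectivity with respect to every $j$-dense mono. Suppose, for contradiction, that some $j$-dense monomorphism $m \colon S \rightarrowtail T$ and element $w \in p_{\max}^*(T)$ admit no lift through $m$ in $\Coll(\ev_{\Ecal})$. Writing $x, y$ for the (still distinct) images of $x_0, y_0$ in $p_{\max}^*(Z)$, the hypothesis of the theorem provides an improvement $\impr{(p',n,v)}{p_{\max}}{(x,y,m,w)}$, and in particular a morphism $p_{\max} \to p'$ in $\pt(\Ecal)$ corresponding to the collage arrow $n$, which keeps $x, y$ distinct and along which $w$ acquires the lift $v$. This exhibits an object of $\mathcal{I}$ above $p_{\max}$; by maximality the structural map $p_{\max} \to p'$ must be an isomorphism, and transporting $v$ back along its inverse (using naturality) produces the forbidden lift of $w$, a contradiction. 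Hence $p_{\max}$ is projective with respect to all $j$-dense monomorphisms, so by \Cref{crly:injchar} it restricts to a point $q$ of $\Fcal$ distinguishing $a$ and $b$; by \Cref{lem:manifestenoughpoints}, $\Fcal$ has enough points.

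The conceptual heart of the argument, and the step I expect to require the most care, is the interplay in the maximality step between lifting $w$ through $m$ and \emph{simultaneously} preserving the separation of $x$ and $y$: it is precisely this simultaneity, packaged into the notion of improvement, that upgrades one-mono-at-a-time lifting into a genuine point of $\Fcal$ that still witnesses the original distinction. The main technical obstacle to discharge is verifying that the colimit serving as upper bound for chains genuinely lands in $\pt(\Ecal)$ and preserves the distinction, since this is exactly what makes Zorn's Lemma applicable and lets maximality, rather than a cardinality bound, drive the saturation.
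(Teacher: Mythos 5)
Your reduction to finding a single distinction-preserving, projective point above $p_0$, and your use of filtered colimits in $\pt(\Ecal)$ to bound chains while preserving the distinction, both match the paper's strategy. However, the Zorn step at the heart of your argument has two genuine gaps. First, the preorder $\mathcal{I}$ of distinction-preserving morphisms $p_0 \to p$ is a proper class, not a set: the hypothesis of the theorem gives no control on the size of the improvements, and $\pt(\Ecal)$ has a proper class of isomorphism classes of objects in general (e.g.\ $\pt(\PSh(\mathsf{FinSet}\op)) \simeq \Set$). Zorn's Lemma does not apply to a proper-class preorder even when every chain has an upper bound --- the ordinals are the standard counterexample --- and in your setting one should expect $\mathcal{I}$ to have \emph{no} maximal elements at all, since a point can typically be extended indefinitely while preserving a distinction. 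Second, even granting a maximal element, maximality in a preorder ordered by factorization only yields a morphism $\sigma \colon p' \to p_{\max}$ back over $p_0$, not an isomorphism; transporting the lift $v$ along $\sigma$ produces a lift of $(\sigma \circ n)_T(w)$ rather than of $w$, since $\sigma \circ n$ is merely an endomorphism of $p_{\max}$ fixing the image of $\phi_{\max}$. So the contradiction you want does not materialize.

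The paper avoids both problems by replacing the global Zorn argument with a \emph{bounded} transfinite recursion: it reduces to the single classifying monomorphism $J \rightarrowtail \Omega$ of $j$-dense monos, well-orders the \emph{set} $p^*(\Omega)$ of requirements (this is the only use of choice), improves one requirement per successor step and takes colimits at limit steps, and then --- since the resulting point $p_{(1)}$ may have acquired new elements of $\Omega$ needing lifts --- iterates the whole procedure $\omega$ times, so that every element of the colimit $p_{(\omega)}^*(\Omega)$ is represented at some finite stage where it has already been handled. This ``catch your tail'' step is precisely what your proposal is missing: some mechanism ensuring that the requirements arising at later stages are eventually all discharged, without appealing to a maximal element that need not exist.
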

\begin{proof}
Since $\Ecal$ has enough points, given any parallel pair $f,g:j_*(Y) \rightrightarrows j_*(X)$ there is some point $p$ separating them as in \eqref{eq:faithful}. Fix corresponding elements $x \neq y \in p^*(j_*(X))$. Without loss of generality, we need only consider the case where $m$ is the monomorphism $J \rightarrowtail \Omega$ classifying $j$-dense monomorphisms. 

Using Zorn's Lemma, put a well-ordering on the elements $w \in p^*(\Omega)$; denote the indexing limit ordinal $\kappa$. For each $\alpha \leq \kappa$, we define by induction a point $p_{\alpha}$, a morphism $n_{\alpha}: p_{\alpha} \to p$ in $\pt(\Ecal)\op$ and an element $v_{\alpha} \in p_{\alpha}^*(J)$ which will be an improvement of $p$ with respect to $(x,y,m,w_{\beta})$ for all $\beta < \alpha$
\begin{itemize}
    \item We take $p_0 := p$ and $n_{\alpha} = \id_p$.
    \item For $\alpha = \beta+1$, we take $p_{\alpha}$ to be the improvement of $p_{\beta}$ with respect to $(n_{\beta};x,n_{\beta};y,m,n_{\beta};w_{\beta})$, and $n_{\alpha} = n;n_{\beta}$, where $n$ is the morphism appearing in the improvement diagram \eqref{eq:improvement}, appealing to Lemma \ref{lem:improvementscompose}.
    \item For $\alpha$ a limit ordinal, we first construct the colimit $p_{\alpha}$ in $\pt(\Ecal)$ of the $\{p_{\beta} \mid \beta < \alpha\}$; this remains an improvement of $p$ with respect to $(x,y,m,w_{\beta})$ for all $\beta < \alpha$ because $p_{\alpha}^*(j_*(X))$ and $p_{\alpha}^*(S)$ are computed as the directed colimit of the corresponding values of $p_{\beta}^*$.
\end{itemize}
The result of this procedure, $p_{(1)}:= p_{\kappa}$, is thus an improvement of $p$ with respect to $(x,y,m,w)$ for all $w \in p^*(\Omega)$. However, \textit{a priori} there may be some elements of $p_{(1)}^*(\Omega)$ for which projectivity fails. To resolve this, it suffices to repeat this procedure of iterative improvement to obtain $p_{(2)},p_{(3)},\dotsc$, since all elements of the colimit $p_{(\omega)}$ of this sequence of points will be represented by elements of $p_{(n)}$ for some $n$. This $p_{(\omega)}$ is a point of $\Fcal$ by Corollary \ref{crly:injchar} which distinguishes $f,g$. Since $x,y$ were generic, we conclude that $\Fcal$ has enough points, as required.
\end{proof}

The proof of Theorem \ref{thm:improvement} (rather than its statement) will serve as a foundation for the results in the remainder of the paper. In each proof, the principles involved are the same: demonstrate the existence of a class of improvements using hypotheses on the subtopos or its presentation and iteratively construct points of the subtopos from those improvements. We will not bother to explicitly mention the parallel pair $f,g$ to be distinguished in proofs from here on.

\begin{rem}[Pullbacks provide canonical improvements]
If the pullback of $m$ along $w$ exists in $\Coll(\ev_{\Ecal})$ and \textit{some} improvement of $p$ with respect to $(x,y,m,w)$ exists, then the pullback is automatically an improvement, and in fact the terminal improvement with respect to $(x,y,m,w)$ independently of $x$ and $y$; this is in particular the case when $p$ is projective. Moreover, $n$ will be a monomorphism (and hence an epimorphism in $\pt(\Ecal)$ by Remark \ref{rem:collagepres}). We expect that pullbacks across the gluing boundary in $\Coll(\ev_{\Ecal})$ occur only very rarely, but we can use a weaker variant of this hypothesis to eliminate Zorn's Lemma from Theorem \ref{thm:improvement}.
\end{rem}

\begin{cor}
\label{cor:terminalimprovement}
Let $j:\Fcal \rightarrowtail \Ecal$ be an inclusion of topoi. Suppose that for every point $p$ and data $(x,y,m,w)$ as in Definition \ref{def:improves}, there exists a \textbf{terminal} improvement of $p$ with respect to $(x,y,m,w)$. If $\Ecal$ has enough points, so does $\Fcal$.
\end{cor}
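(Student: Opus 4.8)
The plan is to run the iterative construction from the proof of Theorem~\ref{thm:improvement}, but to exploit the canonicity of terminal improvements in order to resolve \emph{all} obstructions present at a given stage simultaneously, thereby avoiding the well-ordering of $p^*(\Omega)$ that forced the appeal to Zorn's Lemma. As in that proof, since $\Ecal$ has enough points I would fix a point $p$ separating a chosen parallel pair $f,g : j_*(Y) \rightrightarrows j_*(X)$, with witnesses $x \neq y \in p^*(j_*(X))$, and reduce to the generic $j$-dense monomorphism $m : J \rightarrowtail \Omega$. For each $w \in p^*(\Omega)$ write $p \cdot w$ for the terminal improvement of $p$ with respect to $(x,y,m,w)$; by the preceding Remark this is the pullback of $m$ along $w$ in $\Coll(\ev_{\Ecal})$, it is independent of $x,y$, and its structure map $n_w : p\cdot w \to p$ is a monomorphism in the collage.

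The key observation is that these terminal improvements, together with their iterated composites, assemble into a codirected system of subobjects of $p$. Given $w_1, w_2 \in p^*(\Omega)$, Lemma~\ref{lem:improvementscompose} shows that the terminal improvement of $p\cdot w_1$ with respect to the restricted obstruction $n_{w_1};w_2$ is an improvement of $p$ with respect to \emph{both} $w_1$ and $w_2$, and hence, by terminality, maps canonically to each of $p\cdot w_1$ and $p\cdot w_2$; this provides the required common lower bound. I would then set $p_{(1)}$ to be the limit of this codirected system in $\Coll(\ev_{\Ecal})$. Since $i_{\Fcal}$ creates limits (Remark~\ref{rem:collagepres}), this limit is computed as a filtered colimit in $\pt(\Ecal)$, which exists without any appeal to choice, and the whole construction is canonical precisely because each improvement is terminal. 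Being below every $p\cdot w$, the point $p_{(1)}$ resolves every $w \in p^*(\Omega)$; and because $p_{(1)}^*(Z)$ is the filtered colimit of the $(p\cdot\vec w)^*(Z)$, the distinction $x \neq y$ survives in the limit, exactly as at the limit stages of Theorem~\ref{thm:improvement}.

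To finish, I would iterate $p \mapsto p_{(1)}$ to obtain a sequence $p_{(1)}, p_{(2)}, \dotsc$ and take its filtered colimit $p_{(\omega)}$. Every element of $p_{(\omega)}^*(\Omega)$ is represented at some finite stage $p_{(n)}$ and is therefore resolved one stage later, so by Corollary~\ref{crly:injchar} the point $p_{(\omega)}$ lies in $\Fcal$ and still separates $f,g$. As $x,y$ were generic, $\Fcal$ has enough points.

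The main obstacle is establishing that the terminal improvements genuinely form a codirected (hence filtered) diagram whose limit both exists and remains an improvement of $p$ with respect to the original data. This is exactly where terminality is indispensable: with merely \emph{some} improvement available at each $w$ one would have to choose representatives (reintroducing choice) and would lose the uniqueness of the comparison maps that makes the system codirected. I would need to check carefully, using Lemmas~\ref{lem:improvementscompose} and~\ref{lem:fewercospans}, that the common refinements are again improvements of $p$ with respect to the initial $(x,y,m,-)$, and that the filtered colimit in $\pt(\Ecal)$ is computed pointwise on inverse images, so that both the projectivity condition and the separation of the pair are preserved in passing to the limit.
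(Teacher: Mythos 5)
Your proposal is correct and takes essentially the same route as the paper's proof: both replace the well-ordering of $p^*(\Omega)$ by a canonical codirected diagram of terminal improvements indexed by finite collections of elements of $p^*(\Omega)$ --- the paper forms the joint improvement in one step via the product cospan $J^k \rightarrowtail \Omega^k$ and the tupled element $\langle w_1,\dotsc,w_k\rangle$, whereas you build it by iterating terminal improvements through Lemma~\ref{lem:improvementscompose} --- and then substitute the filtered colimit in $\pt(\Ecal)$ for $p_{(1)}$ and finish as in Theorem~\ref{thm:improvement}. One small caveat: monomorphy of the structure maps $n_w$ is only guaranteed when the terminal improvement is literally a pullback in the collage, which terminality alone does not give, so cofilteredness should rest on the uniqueness of the comparison maps supplied by terminality (which you also invoke) rather than on the diagram being a poset of subobjects of $p$.
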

\begin{proof}
With the set-up of $x,y$ and $m$ as in the proof of Theorem \ref{thm:improvement}, consider the directed poset of finite subsets of $p^*(\Omega)$. Given such a subset $W = \{w_1,\dotsc,w_k\}$, consider the corresponding diagram,
% https://q.uiver.app/#q=WzAsNyxbMywyLCJcXE9tZWdhXmsiXSxbMywxLCJKXmsiXSxbMCwyLCJwIl0sWzEsMCwiXFxjZG90Il0sWzEsMywiXFxjZG90Il0sWzAsMSwicF9XIl0sWzMsMywial8qKFgpIl0sWzEsMCwibV5rIiwwLHsic3R5bGUiOnsidGFpbCI6eyJuYW1lIjoibW9ubyJ9fX1dLFsyLDAsIlxcbGFuZ2xlIHdfMSxcXGRvdHNjLHdfa1xccmFuZ2xlIiwxLHsibGFiZWxfcG9zaXRpb24iOjcwfV0sWzMsNCwiIiwxLHsic3R5bGUiOnsiYm9keSI6eyJuYW1lIjoiZG90dGVkIn0sImhlYWQiOnsibmFtZSI6Im5vbmUifX19XSxbNSwyLCJuX1ciLDIseyJzdHlsZSI6eyJib2R5Ijp7Im5hbWUiOiJkYXNoZWQifX19XSxbNSwxLCJ2X1ciLDEseyJsYWJlbF9wb3NpdGlvbiI6NzAsInN0eWxlIjp7ImJvZHkiOnsibmFtZSI6ImRhc2hlZCJ9fX1dLFsyLDYsInkiLDEseyJsYWJlbF9wb3NpdGlvbiI6NzAsIm9mZnNldCI6Mn1dLFsyLDYsIngiLDEseyJsYWJlbF9wb3NpdGlvbiI6NzAsIm9mZnNldCI6LTF9XV0=
\[\begin{tikzcd}[ampersand replacement=\&]
	\& \cdot \\
	{p_W} \&\&\& {J^k} \\
	p \&\&\& {\Omega^k} \\
	\& \cdot \&\& {j_*(X)}
	\arrow["{m^k}", tail, from=2-4, to=3-4]
	\arrow["{\langle w_1,\dotsc,w_k\rangle}"{description, pos=0.7}, from=3-1, to=3-4]
	\arrow[dotted, no head, from=1-2, to=4-2]
	\arrow["{n_W}"', dashed, from=2-1, to=3-1]
	\arrow["{v_W}"{description, pos=0.7}, dashed, from=2-1, to=2-4]
	\arrow["y"{description, pos=0.7}, shift right=2, from=3-1, to=4-4]
	\arrow["x"{description, pos=0.7}, shift left, from=3-1, to=4-4]
\end{tikzcd}\]
where $p_W$ is the terminal improvement of $p$ with respect to the indicated data. Composing with projection morphisms on the right, we deduce that $p_W$ is also an improvement of $w_i$ for each $i$ by Lemma \ref{lem:fewercospans}, and hence we get a canonical morphism $p_W \to p_{W'}$ whenever $W' \subseteq W$. Thus these subsets form a codirected diagram of improvements whose colimit in $\pt(\Ecal)$ we can substitute for $p_{(1)}$ in the proof of Theorem \ref{thm:improvement}. The remainder of the proof continues as before.
\end{proof}

\begin{rem}[Alternative orderings]
We selected the classifying $j$-dense monomorphism for convenience in the proofs of Theorem \ref{thm:improvement} and Corollary \ref{cor:terminalimprovement}, but it is worth noting that the set $p^*(\Omega)$ that we impose a well-ordering on in the proof of Theorem \ref{thm:improvement} already has a natural ordering inherited from the internal order relation on $\Omega$. However, there is no relation \textit{a priori} between the ordering on elements and their respective improvements. At best, it is possible to construct a diagram of improvements indexed by this ordering under the hypotheses of Corollary \ref{cor:terminalimprovement}: for each $w$, we consider the filtered diagram of finite subsets of elements below $w$ and proceed to construct a joint improvement with respect to all elements below $w$ via the construction in the proof of Corollary \ref{cor:terminalimprovement}. It may be that the ordering on $\Omega$ can be better exploited to obtain a constructive result under weaker hypotheses.
\end{rem}

Exploitation of \Cref{cor:terminalimprovement} will depend on a more systematic investigation of the features of collages than we have time or space for here.

\subsection{Making it easier to find improvements}

The existence of improvements with respect to arbitrary data seems demanding. Fortunately, we can relax it in a natural way.
\begin{lem}[Generators suffice]
\label{lem:generatorsuffices}
Let $j: \Fcal \rightarrowtail \Ecal$ be an inclusion of topoi, $p$ a point of $\Ecal$, $x\neq y \in p^*(j_*(X))$. Then $p$ admits an improvement with respect to $(x,y,m,w)$ for all $j$-dense $m:S \rightarrowtail T$ and elements $w \in p^*(T)$ if and only if this is true for $T$ in a generating set of objects of $\Ecal$.
\end{lem}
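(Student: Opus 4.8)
The forward implication is immediate and requires no work: a generating set is in particular a collection of objects, so if $p$ admits improvements with respect to all $j$-dense monomorphisms into an arbitrary target, then in particular it does so whenever the target lies in a generating set. The substance is the converse, and the plan is to reduce an arbitrary target to a generating object by factoring the datum $w$, and then to transport the resulting improvement back along a pullback using \Cref{lem:fewercospans}.

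Fix a generating set $\mathcal{G}$ and assume $p$ admits improvements with respect to $(x,y,m',w')$ whenever the target of $m'$ lies in $\mathcal{G}$. Given an arbitrary $j$-dense monomorphism $m: S \rightarrowtail T$ and $w \in p^*(T)$, I would first exploit that $\mathcal{G}$ generates $\Ecal$: the family $\{g: G \to T \mid G \in \mathcal{G}\}$ of all morphisms into $T$ out of objects of $\mathcal{G}$ is jointly epimorphic, so by the cocontinuity of $p^*$ (\Cref{lem:colimpres}) it is sent to a jointly epic cone, and $w$ factors in $\Coll(\ev_{\Ecal})$ through one of its members. That is, there are $g: G \to T$ with $G \in \mathcal{G}$ and $w_G \in p^*(G)$ with $w = w_G; g$.

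Next I would form the pullback of $m$ along $g$,
\[
\begin{tikzcd}[ampersand replacement=\&]
S' \& S \\
G \& T
\arrow["t", from=1-1, to=1-2]
\arrow["{m'}"', tail, from=1-1, to=2-1]
\arrow["m", tail, from=1-2, to=2-2]
\arrow["g"', from=2-1, to=2-2]
\end{tikzcd}
\]
in $\Ecal$. The comparison $m': S' \rightarrowtail G$ is again a $j$-dense monomorphism, since $j$-dense monomorphisms are stable under pullback, and its target $G$ lies in $\mathcal{G}$; the hypothesis therefore supplies an improvement $\impr{(p',n,v)}{p}{(x,y,m',w_G)}$. Reading the square above as the commuting right-hand square of \eqref{eq:fewercospans} --- with $m'$ in the position of the left-hand mono, $m$ in that of the right-hand mono, $g$ as the horizontal map between targets and $t$ as the horizontal map between sources --- the identity $w_G; g = w$ lets me invoke the first bullet of \Cref{lem:fewercospans}, turning this into $\impr{(p',n,v;t)}{p}{(x,y,m,w)}$, the desired improvement with respect to the original data.

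The step I expect to require the most care is the last one: \Cref{lem:fewercospans} must be oriented so that the generating object $G$, where the hypothesis hands us an improvement, is the source of the comparison and $T$ its target, so that the improvement is pushed forward along $g$ rather than pulled back. It is worth recording that this forward transfer uses only commutativity of the square; the pullback hypothesis enters solely to certify that $m'$ is $j$-dense, which is what makes the inductive hypothesis applicable.
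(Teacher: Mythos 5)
Your proof is correct and follows essentially the same route as the paper: factor $w$ through a leg $g:G\to T$ of the jointly epic family from generators using the cocontinuity of $p^*$ (\Cref{lem:colimpres}), obtain an improvement for the pulled-back $j$-dense monomorphism over $G$ from the hypothesis, and transfer it back to $(m,w)$ via the first bullet of \Cref{lem:fewercospans}. You merely spell out the pullback step and the orientation of \Cref{lem:fewercospans} more explicitly than the paper's two-line argument does.
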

\begin{proof}
This is an immediate consequence of Lemma \ref{lem:colimpres}: fixing a generating family, every object $T$ of $\Ecal$ admits a covering family of morphisms whose domains are in this family, and any element of $p^*(T)$ must factor through one of the legs of the covering family. An improvement with respect to the factored morphisms is an improvement with respect to the original morphisms by Lemma \ref{lem:fewercospans}.
\end{proof}

\begin{para}[Moving away from the generic $j$-dense morphism]
Examining the proof of Theorem \ref{thm:improvement} in light of Lemma \ref{lem:generatorsuffices}, we find that we can get away with fewer improvements in this way. For instance, suppose $\Ecal = \PSh(\Ccal)$ and $p$ is a representable point (this special case will be the focus of Section \ref{sec:recover}). Then elements of $\yoco(C)^*(\Omega)$ are in bijection with the set of sieves on $C$ in $\Ccal$; if $\Ccal$ is a category with countably many morphisms then there could be uncountably many such elements, but $\coprod_{C,D \in \Ccal} \yoco(C)^*(\yo(D))$ is countable by assumption, and these are sufficient for constructing improvements.
\end{para}

In fact, we can restrict not only the codomains of cospans featuring in improvements, but the class of cospans itself using Lemmas  \ref{lem:fewercospans} and the following definition and lemma which generalize the site-theoretic notions of presieve and multicomposite of sieves.

\begin{defn}[Generating presieve and multicomposites] \label{defn:multicomposite}
Suppose we are given an inclusion $j:\Fcal \to \Ecal$ and $\Tcal$ a generating family of objects for $\Ecal$. A \textbf{generating presieve} for a $j$-dense monomorphism $m:S \rightarrowtail T$ with $T \in \Tcal$ is a collection of morphisms $\{g_i: T_i \to T\}$ where $T_i \in \Tcal$ whose joint image is $m$. A monomorphism $m': S' \rightarrowtail T$ is a \textbf{multicomposite over $m$} if there exists a generating presieve for $m$ as above and $j$-dense monomorphisms $m_i:S_i \rightarrowtail T_i$ such that $m'$ is the joint image of $m_i;g_i$.
\end{defn}

\begin{lem}
For $j:\Fcal \to \Ecal$, $\Tcal$ and $m:S \rightarrowtail T$ as in Definition \ref{defn:multicomposite}, a multicomposite $m'$ over $m$ is $j$-dense.
\end{lem}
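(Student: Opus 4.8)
The statement to prove is that a multicomposite $m'$ over a $j$-dense monomorphism $m: S \rightarrowtail T$ (with $T$ in the generating family $\Tcal$) is itself $j$-dense. Recall the data: we have a generating presieve $\{g_i: T_i \to T\}$ whose joint image is $m$, we have $j$-dense monomorphisms $m_i: S_i \rightarrowtail T_i$, and $m'$ is the joint image of the composites $m_i; g_i$. Since $j$-density of a monomorphism $n$ is by definition the assertion that $j_*j^*(n)$ is an isomorphism — equivalently that the associated sheaf functor sends $n$ to an isomorphism, equivalently that $n$ is a local isomorphism for the topology corresponding to $j$ — the natural plan is to work with the local operator (Lawvere–Tierney topology) and verify density using its closure properties.

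**The approach.** The plan is to argue that $j$-dense monomorphisms are closed under the two operations out of which a multicomposite is built: (1) composition, and (2) taking joint images (unions of subobjects). First I would recall the standard closure properties of the class of dense monomorphisms for any local operator $j$ on a topos: this class contains all isomorphisms, is closed under composition, is stable under pullback, and — crucially here — is closed under taking joint images (equivalently, if $n \leq n'$ as subobjects of a common object and $n$ is dense, then $n'$ is dense, since density is upward-closed). These are all consequences of the fact that the closure operator $\overline{(-)}$ associated to $j$ is idempotent, order-preserving, and stable under pullback. Then the argument runs as follows. The joint image $m$ of $\{g_i\}$ is $j$-dense by hypothesis. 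For each $i$, consider the composite $S_i \xrightarrow{m_i} T_i \xrightarrow{g_i} T$; since $g_i$ need not be monic, I would factor it and instead work at the level of images. The joint image of the $m_i; g_i$ is $m'$, and I want to compare its closure with that of $m$.

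**The key computation.** The heart of the matter is to show $\overline{m'} = \overline{m}$ (closure in $T$), from which $j$-density of $m'$ follows because $\overline{m} = \id_T$ by hypothesis that $m$ is dense. On one hand $m' \leq m$ (the joint image of the $m_i;g_i$ is contained in the joint image of the $g_i$, since each $m_i;g_i$ factors through $g_i$), so $\overline{m'} \leq \overline{m}$. For the reverse inequality I would use pullback-stability of closure along each $g_i$: the pullback of $\overline{m'}$ along $g_i$ contains the image of $m_i; g_i$ pulled back, and since $m_i$ is $j$-dense its closure in $T_i$ is all of $T_i$; pulling the closure of $m'$ back along $g_i$ and using that closure commutes with pullback gives that $g_i$ factors through $\overline{m'}$ up to closure. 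Taking the joint image over all $i$ recovers that $\overline{m} \leq \overline{m'}$. Combining the two inequalities yields $\overline{m'} = \overline{m} = \id_T$, so $m'$ is $j$-dense.

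**Anticipated obstacle.** The main subtlety I expect is handling the fact that the $g_i$ are general morphisms rather than monomorphisms, so the interaction between ``joint image'' and closure must be treated carefully via the image factorization and pullback-stability of the closure operator, rather than naively intersecting subobjects. Concretely, the delicate step is verifying that pulling back the closure $\overline{m'}$ along $g_i$ dominates the closure of $m_i$ in $T_i$; this uses that closure commutes with pullback together with the observation that $\im(m_i; g_i) \leq m'$. Once pullback-stability and idempotency of the closure operator are in hand, the rest is a routine manipulation of subobjects in the Heyting algebra $\mathrm{Sub}(T)$, and no serious categorical machinery beyond the standard theory of local operators is required.
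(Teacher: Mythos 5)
Your proof is correct, but it takes a genuinely different (though closely related) route from the paper's. The paper applies the inverse image functor $j^*$ directly to the diagram defining the multicomposite: since $j^*$ preserves images and jointly epimorphic families and inverts $m$ and each $m_i$ (they are dense), $j^*(m')$ is exhibited as the image of a jointly epimorphic family onto $j^*(T)$ and is therefore an isomorphism --- a one-line argument. You instead work internally in $\mathrm{Sub}(T)$ with the closure operator of the local operator, proving $\overline{m'} = \overline{m} = \id_T$ via two inequalities. Your delicate step --- pulling $\overline{m'}$ back along the non-monic $g_i$, using that closure commutes with pullback and that $g_i^*(m') \geq m_i$ (from $\mathrm{im}(m_i;g_i) \leq m'$ and the unit of the image--pullback adjunction) to conclude $g_i^*(\overline{m'}) \geq \overline{m_i} = \id_{T_i}$, hence $\mathrm{im}(g_i) \leq \overline{m'}$ for all $i$ and so $\overline{m} \leq \overline{m'}$ by idempotency --- is handled correctly and is exactly where pullback-stability of the closure operator earns its keep. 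What each approach buys: the paper's version is shorter and makes the role of the preservation properties of sheafification transparent; yours is a self-contained subobject-lattice computation that requires only the standard axioms of a universal closure operator, and in particular makes explicit why the non-monicity of the $g_i$ is harmless. Both are sound proofs of the same statement.
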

\begin{proof}
If $m'$ is a multicomposite over $m$, by definition we have a diagram:
\begin{equation} \label{eqn:multicomposite}
% https://q.uiver.app/#q=WzAsOSxbMSwzLCJUIl0sWzEsMiwiUyJdLFswLDEsIlRfaSJdLFsyLDEsIlRfayJdLFsxLDEsIlxcY2RvdHMiXSxbMCwwLCJTX2kiXSxbMiwwLCJTX2siXSxbMSwwLCJcXGNkb3RzIl0sWzMsMCwiUyciXSxbMSwwLCJtIl0sWzIsMV0sWzMsMV0sWzUsMiwibV9pIl0sWzYsMywibV9rIl0sWzgsMCwibSciLDAseyJjdXJ2ZSI6LTN9XSxbNSw4LCIiLDEseyJjdXJ2ZSI6LTJ9XSxbNiw4XSxbMTAsMTEsIiIsMCx7ImxldmVsIjoxLCJzdHlsZSI6eyJoZWFkIjp7Im5hbWUiOiJub25lIn19fV1d
\begin{tikzcd}[ampersand replacement=\&]
	{S_i} \& \cdots \& {S_k} \& {S'} \\
	{T_i} \& \cdots \& {T_k} \\
	\& S \\
	\& T
	\arrow["m", from=3-2, to=4-2]
	\arrow[""{name=0, anchor=center, inner sep=0}, from=2-1, to=3-2]
	\arrow[""{name=1, anchor=center, inner sep=0}, from=2-3, to=3-2]
	\arrow["{m_i}", from=1-1, to=2-1]
	\arrow["{m_k}", from=1-3, to=2-3]
	\arrow["{m'}", curve={height=-18pt}, from=1-4, to=4-2]
	\arrow[curve={height=-12pt}, from=1-1, to=1-4]
	\arrow[from=1-3, to=1-4]
	\arrow[no head, from=0, to=1]
\end{tikzcd}
\end{equation}
where the horizontal line indicates a jointly epimorphic family; since $j^*$ preserves images and epimorphic families and sends $m$ and each $m_i$ to an isomorphism, $j^*(m')$ is the image of a jointly epimorphic family, so is an isomorphism. Thus $m'$ is $j$-dense as claimed.
\end{proof}

\begin{prop}[Minimizing the necessary number of improvements]
\label{prop:lowerboundssuffice}
Let $j: \Fcal \rightarrowtail \Ecal$ be an inclusion of topoi, $p$ a point of $\Ecal$, $x\neq y \in p^*(j_*(X))$. Consider the preorder where:
\begin{itemize}
    \item elements are cospans $(m,w)$ in $\Coll(\ev_{\Ecal})$ whose codomain $T$ is in a generating set of objects of $\Ecal$, where $m: S \rightarrowtail T$ is $j$-dense and $w \in p^*(T)$;
    \item we have $(m,w) \leq (m',w')$ if there exists a pair $(u,t)$ as in \eqref{eq:fewercospans} such that $m;u = t;m'$ and $w' = w;u$.
\end{itemize}
Then for fixed $n:p' \to p$ in $\Coll(\ev_{\Ecal})$, there exists $v$ with $\impr{(p',n,v)}{p}{(x,y,m,w)}$ for all $(m,w)$ if and only if holds for a subset of the $(m,w)$ whose closure under pullbacks and multicomposites provides a lower bound for every element with respect to this ordering.
\end{prop}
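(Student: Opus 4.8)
The plan is to fix the morphism $n\colon p' \to p$ once and for all and to regard the existence of an improvement $\impr{(p',n,v)}{p}{(x,y,m,w)}$ (with $v$ allowed to depend on the cospan) as a predicate $P$ on the cospans of the preorder. The forward implication is immediate: the distinguished subset sits inside the class of all cospans, so if $P$ holds everywhere it holds on the subset. The whole content is the reverse implication, and for it I would establish that $P$ is stable under the three manipulations implicit in the statement, namely passing upward in the preorder, taking pullbacks, and forming multicomposites.

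First I would record that $P$ is \emph{upward closed}: if $(m,w) \leq (m',w')$, witnessed by a pair $(u,t)$ with $m;u = t;m'$ and $w' = w;u$, then the first bullet of Lemma \ref{lem:fewercospans} turns an improvement $v$ of $(m,w)$ into the improvement $v;t$ of $(m',w')$, crucially with the \emph{same} $n$. Second, $P$ is stable under pullback: the second bullet of the same lemma says that when $m$ is the pullback of $m'$ along $u$ and $v$ is the induced comparison map, an improvement of $(m',w;u)$ restricts to one of $(m,w)$, again fixing $n$. Since a pullback $m$ of a mono $m' \in \mathcal{S}$ satisfies $(m,w) \leq (m',w;u)$, these pullbacks inherit $P$ from $\mathcal{S}$.

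The substantive step is closure under multicomposites. Here I would use that $p'^*$, being the inverse image of a geometric morphism, preserves images and jointly epimorphic families (Lemma \ref{lem:colimpres}). Given a generating presieve $\{g_i \colon T_i \to T\}$ for $m$ and $j$-dense monos $m_i \colon S_i \rightarrowtail T_i$ carrying improvements, the image $n;w \in p'^*(T)$ factors through some leg $p'^*(g_i)$; transporting the corresponding leg-improvement $v_i \in p'^*(S_i)$ along $m_i;g_i$ and into the joint image $S' \rightarrowtail T$ of diagram \eqref{eqn:multicomposite} produces an element $v'$, and preservation of the image factorization guarantees that $v';m' = n;w$ while the separation $n;x \neq n;y$ is untouched. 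This yields $P$ for the multicomposite $m'$, using compositionality (Lemma \ref{lem:improvementscompose}) to chain the leg-improvements when several legs are involved. Combining the three stability properties, $P$ holds on the entire closure $\overline{\mathcal{S}}$ of the distinguished subset.

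To conclude, I would invoke the lower-bound hypothesis: any cospan $(m,w)$ admits some $(m_0,w_0) \in \overline{\mathcal{S}}$ with $(m_0,w_0) \leq (m,w)$; since $P(m_0,w_0)$ holds and $P$ is upward closed, $P(m,w)$ holds. I expect the multicomposite step to be the main obstacle, the delicate point being the bookkeeping of the element $w$: one must factor its image $n;w$ through a single leg of the presieve \emph{after} applying the fixed $n$, match this with an improvement of the corresponding piece, and then verify that the reassembled element genuinely lifts $n;w$ through the joint image without perturbing $n$ or the distinctness of $n;x$ and $n;y$. The pullback and upward-closure steps, by contrast, are essentially direct citations of Lemma \ref{lem:fewercospans}.
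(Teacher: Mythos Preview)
Your three-step decomposition (upward closure, pullback stability, multicomposite closure) matches the paper's exactly, and your invocations of the two parts of Lemma~\ref{lem:fewercospans} for the first two steps are identical to the paper's.

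The multicomposite step, however, contains a gap that the paper's phrasing avoids. You perform the factoring at the level of $p'^*$: after lifting $n;w$ through $m$ to obtain $v \in p'^*(S)$, you factor $v$ through some leg $\bar g_k$ to produce an element $v' \in p'^*(T_k)$, and then reach for the leg-improvement for $(m_k,\cdot)$. But the improvement hypothesis for $(m_k,w_k)$ only supplies a lift of elements of the form $n;w_k$ with $w_k \in p^*(T_k)$; your factored element $v'$ need not lie in the image of $n^*_{T_k}\colon p^*(T_k) \to p'^*(T_k)$, so you cannot invoke it. The paper's proof instead speaks of ``a multicomposite of morphisms for which $p$ is projective'': this is the crucial manoeuvre, because doing the lifting and factoring at the level of $p^*$ (lift $w$ along $m$ to $v_0 \in p^*(S)$, then factor $v_0$ through a leg to obtain $w_k \in p^*(T_k)$) keeps the element inside $p^*$, where the improvement hypothesis for $(m_k,w_k)$ applies directly, and only the final lift along $m_k$ is carried out in $p'^*$.

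You correctly identify the bookkeeping of $w$ across legs as the delicate point; the missing idea is that one must delay the passage from $p$ to $p'$ until the very last step, rather than transporting through $n$ at the outset.
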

\begin{proof}
By the first point of Lemma \ref{lem:fewercospans}, if $(m,w)$ is a lower bound for $(m',w')$ in the preorder, then $\impr{(p',n,v)}{p}{(x,y,m,w)}$ entails $\impr{(p',n,v;t)}{p}{(x,y,m',w')}$, whence a collection of lower bounds is sufficient. By the second point of Lemma \ref{lem:fewercospans} it is sufficient for a pullback of $(m,w)$ to be a lower bound. Finally, given a multicomposite $m'$ of morphisms for which $p$ is projective as in \eqref{eqn:multicomposite} and an element $w \in p^*(T)$ we can construct the required lift along $m'$ by working up the diagram. It is worth noting that we only actually use projectivity with respect to \textit{one} of the $m_i$ in this construction; we will exploit this fact later on.
\end{proof}

Our main use case for Proposition \ref{prop:lowerboundssuffice} will be for minimizing the number of iterations needed in the induction of Theorem \ref{thm:improvement}. The following lemma will (under suitable hypotheses) enable us to avoid the secondary induction procedure in the proof.

\begin{lem}[Folding improvements into a colimit] \label{lem:limitpoint}
Let $\Ecal$ be a topos. Suppose that we are given a point $p \in \pt(\Ecal)$, elements $x \neq y \in p^*(Z)$ and a morphism $m:S \to T$ in $\Ecal$. Suppose that we have constructed a cofiltered diagram $D:\Ibb \to \Coll(\ev_{\Ecal})/p$ in the slice category on objects $n:p' \to p$ such that $n;x \neq n;y$ (prospective improvements of $p$). Suppose that for each $n:p' \to p$ in the diagram and element $w \in {p'}^*(T)$ there is a morphism $n':p'' \to p'$ in the diagram and an element $v \in {p''}^*(S)$ such that $\impr{(p'',n',v)}{p'}{(n;x,n;y,m,w)}$. Then the limit $q$ of this diagram (the colimit in $\pt(\Ecal)$) is projective with respect to $m$.
\end{lem}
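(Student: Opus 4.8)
The plan is to unwind the definition of projectivity and then exploit the fact that $q$ is a filtered colimit of points, computed pointwise on inverse images. Recall from \Cref{crly:injchar} that $q$ being projective with respect to $m:S\to T$ means exactly that every $w\in q^*(T)$, viewed as a morphism $q\to T$ in $\Coll(\ev_{\Ecal})$, factors as $w=w';m$ for some $w'\in q^*(S)$. Since $\Ibb$ is cofiltered, $q$ is the filtered colimit of the $p'$ in $\pt(\Ecal)$; write $\pi_{p'}:q\to p'$ for the limit projections in the collage. As in the limit step of the proof of \Cref{thm:improvement}, this colimit is computed pointwise on inverse images, so for each object $X$ we have a filtered colimit $q^*(X)=\colim_{\Ibb}{p'}^*(X)$ whose coprojections send an element $w_0\in {p'}^*(X)$ (a morphism $p'\to X$) to the composite $\pi_{p'};w_0\in q^*(X)$.

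With this in place the argument is a short diagram chase in the collage. First I would fix an arbitrary $w\in q^*(T)$. Because $q^*(T)=\colim_{\Ibb}{p'}^*(T)$ is filtered, $w$ is represented at a finite stage: there is an object $n:p'_0\to p$ of the diagram and an element $w_0\in {p'_0}^*(T)$ with $w=\pi_{p'_0};w_0$. Applying the hypothesis to $p'_0$ and $w_0$ produces a transition morphism $n':p''\to p'_0$ of the diagram and an element $v\in {p''}^*(S)$ with $\impr{(p'',n',v)}{p'_0}{(n;x,n;y,m,w_0)}$; by \Cref{def:improves} the commutativity clause gives $v;m=n';w_0$. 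Now transport $v$ to $q$ by setting $w':=\pi_{p''};v\in q^*(S)$. Using associativity of composition in the collage, the improvement identity $v;m=n';w_0$, and the limit-cone identity $\pi_{p''};n'=\pi_{p'_0}$ (valid because $n'$ is a transition map of the diagram over which $q$ is a cone), we compute
\[
w';m=\pi_{p''};v;m=\pi_{p''};(n';w_0)=(\pi_{p''};n');w_0=\pi_{p'_0};w_0=w.
\]
Thus $w$ factors through $m$ via $w'$, and since $w$ was arbitrary, $q$ is projective with respect to $m$.

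I expect the only genuinely non-formal ingredient to be the pointwise computation $q^*=\colim_{\Ibb}{p'}^*$, which simultaneously guarantees that every $w\in q^*(T)$ appears at a finite stage and that the coprojections cohere with the diagram's transition maps; I would simply invoke this as in the proof of \Cref{thm:improvement} rather than reprove it. Everything else is composition of morphisms in $\Coll(\ev_{\Ecal})$. It is worth noting what the argument does \emph{not} use: $m$ need not be a monomorphism, and the separation condition $n;x\neq n;y$ plays no role in the factorization — it serves only to keep each $p'$ a prospective improvement inside the slice, so no additional hypotheses on the data are required.
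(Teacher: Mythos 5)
Your proof is correct and follows the same route as the paper's own (much terser) argument: represent an element of $q^*(T)$ at a stage $p'_0$ of the cofiltered diagram, use the hypothesis to find an improvement $p''$ in the diagram supplying the lift over $m$, and push that lift back along the colimit coprojection to $q$. The paper states this in one sentence; you have simply made explicit the pointwise computation $q^* \cong \colim_{\Ibb} {p'}^*$ and the cone identity $\pi_{p''};n' = \pi_{p'_0}$ that make the diagram chase work, which is exactly what is implicitly being used.
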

\begin{proof}
Every element of $q^*(T)$ is represented by some element $v \in {p'}^*(T)$, and the existence of the improvement $p''$ in the diagram provides the morphism required for projectivity. % in Corollary \ref{crly:injchar}.
\end{proof}

\section{Recovering completeness results}
\label{sec:recover}

To find improvements in specific cases, we finally need to start exploiting presentations of topoi as sheaves on sites. We develop some auxiliary results for this setting in Section \ref{ssec:presheafprelims} before applying them in Section \ref{ssec:makkairecovery}. We progressively strengthen hypotheses on the site in the remainder of the section, assuming that the site has further limits in Section \ref{ssec:EspKan} and then that it is a frame in \ref{ssec:locales}.

\subsection{Presheaf case preliminaries}
\label{ssec:presheafprelims}

The simplest situation, which will be sufficient for recovering all known completeness results, is to consider an inclusion of topoi of the form $\Sh(\Ccal,J) \to \PSh(\Ccal)$. This situation is particularly convenient: not only does any presheaf topos have enough points, but the most natural class of enough points are the representables, whose inverse image functors preserve all limits. Indeed, all objects of $\pt(\PSh(\Ccal)) \simeq \Ind(\Ccal\op)$ are filtered colimits of representable points, so that properties of $\pt(\PSh(\Ccal))$ can be deduced from those of $\Ccal$, as we demonstrate now.

\begin{prop} \label{indindind}
Let $\Ccal$ be an idempotent-complete small category. The following are equivalent:
\begin{enumerate}
    \item $\Ccal$ has pushouts,
    \item $\Ind(\Ccal)$ has pushouts.
\end{enumerate}
\end{prop}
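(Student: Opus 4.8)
The plan is to prove the two implications separately; the common engine is that pushouts, being finite colimits, commute with the filtered colimits that generate $\Ind(\Ccal)$. The key preliminary observation I would isolate first is that the canonical fully faithful embedding $\Ccal \hookrightarrow \Ind(\Ccal)$ \emph{preserves} any pushout that exists in $\Ccal$. To see this, recall that an object $Z$ of $\Ind(\Ccal) \subseteq [\Ccal\op,\Set]$ is a flat (filtered colimit of representable) presheaf, and that for a representable $\Ccal(-,D)$ the pushout property of $P = B +_A C$ in $\Ccal$ reads exactly as the pullback formula $\Ccal(P,D) \cong \Ccal(B,D)\times_{\Ccal(A,D)}\Ccal(C,D)$. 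Writing $Z = \colim_i \Ccal(-,D_i)$ as a filtered colimit and using that filtered colimits commute with pullbacks in $\Set$, one obtains $Z(P) \cong Z(B)\times_{Z(A)}Z(C)$ for every $Z$. Since mapping out of a representable is evaluation, $\Ind(\Ccal)(\Ccal(-,P),Z) \cong Z(P)$, and this identity says precisely that $P$ has the universal property of the pushout in $\Ind(\Ccal)$.

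For (1) $\Rightarrow$ (2), I would reduce an arbitrary span $B \leftarrow A \to C$ in $\Ind(\Ccal)$ to the $\Ccal$-case by the standard reindexing lemma for ind-objects: a finite diagram in $\Ind(\Ccal)$ is the (pointwise) filtered colimit of a filtered family of finite diagrams in $\Ccal$. Thus I may present the span as $\colim_{i \in \Ibb}(B_i \leftarrow A_i \to C_i)$ with $\Ibb$ filtered and each leg in $\Ccal$. Using hypothesis (1) I form the pushouts $P_i := B_i +_{A_i} C_i$ in $\Ccal$, which by the preliminary observation are also pushouts in $\Ind(\Ccal)$ and assemble into a diagram $\Ibb \to \Ind(\Ccal)$. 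Since pushouts in the functor category $[\Ibb,\Ind(\Ccal)]$ are computed pointwise and the colimit functor $\colim_\Ibb$ is a left adjoint, hence cocontinuous, I conclude
\[
\colim_{i}P_i \;=\; \colim_i\bigl(B_i +_{A_i} C_i\bigr) \;\cong\; \bigl(\colim_i B_i\bigr) +_{(\colim_i A_i)} \bigl(\colim_i C_i\bigr) \;=\; B +_A C,
\]
so the pushout exists in $\Ind(\Ccal)$.

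For (2) $\Rightarrow$ (1), I would start from a span in $\Ccal$, take its pushout $P$ in $\Ind(\Ccal)$ (which exists by hypothesis), and observe that $P$ is finitely presentable, being a finite colimit of the finitely presentable objects $A,B,C$. It then remains to recognise $P$ as an object of $\Ccal$: restricting the universal property of $P$ to test objects in $\Ccal$ and using full faithfulness shows that a pushout in $\Ccal$ exists if and only if $P$ lies in the essential image of $\Ccal$, in which case it agrees with $P$ by the preliminary observation.

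I expect this last identification to be the main obstacle, since in general the finitely presentable objects of $\Ind(\Ccal)$ form the idempotent completion of $\Ccal$ rather than $\Ccal$ itself; the argument goes through cleanly once $\Ccal$ is identified with the full subcategory of finitely presentable objects of $\Ind(\Ccal)$ (i.e.\ once $\Ccal$ is Cauchy complete), and I would either invoke that identification here or restrict to the categories of interest. The other delicate point, in (1) $\Rightarrow$ (2), is the reindexing lemma itself; I would either cite the standard statement for ind-categories or prove it by hand, presenting the finitely many maps of the span over a single cofinal filtered index after finitely many factorisations.
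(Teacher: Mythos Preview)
Your argument for $(1)\Rightarrow(2)$ is correct and essentially equivalent to the paper's, but packaged differently. The paper expresses ``$\Ccal$ has pushouts'' as the existence of a left adjoint $\colim_{\lefthalfcap}\dashv\Delta:\Ccal\to\Ccal^{\lefthalfcap}$, applies the $2$-functor $\Ind$ to this adjunction, and then invokes the equivalence $\Ind(\Ccal^{\lefthalfcap})\simeq\Ind(\Ccal)^{\lefthalfcap}$ (citing Henry's result that $\Ind$ commutes with exponentiation by well-founded categories). Your ``reindexing lemma'' is precisely the content of that equivalence in this special case, and your use of cocontinuity of $\colim_{\Ibb}$ plays the role of $\Ind$ preserving the adjunction. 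The paper's route is slicker once one has the cited machinery; yours is more self-contained and makes the commutation of pushouts with filtered colimits explicit.

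For $(2)\Rightarrow(1)$ you are more careful than the paper. The paper simply asserts that the embedding $\Ccal\hookrightarrow\Ind(\Ccal)$ \emph{creates} finite colimits, whereas you observe that the pushout $P$ computed in $\Ind(\Ccal)$ is only guaranteed to be finitely presentable, hence a retract of a representable, and that identifying it with an object of $\Ccal$ requires $\Ccal$ to be Cauchy complete. Your caution is warranted: without idempotent-splitting the creation claim is not obvious, and the paper's one-line justification does not address it. In the paper's applications only the direction $(1)\Rightarrow(2)$ is actually used, so this does not affect anything downstream, but as a standalone proposition your reservation is the honest one.
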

\begin{proof}
The implication $(2) \Rightarrow (1)$ follows from the fact that when $\Ccal$ is idempotent complete, the Yoneda embedding $\Ccal \to \Ind(\Ccal)$ creates any finite colimits that exist in $\Ind(\Ccal)$, since $\Ccal$ can be recovered as the full subcategory on the finitely presentable objects, which are closed under finite colimits.

To prove $(1) \Rightarrow (2)$, let $\lefthalfcap$ be the walking span. We can present the fact that $\Ccal$ has pushouts via the existence of a left adjoint
\[\colim{}_{\lefthalfcap}: \Ccal^{\lefthalfcap} \to \Ccal\]
to the diagonal functor $\Delta: \Ccal \to \Ccal^{\lefthalfcap}$.

Applying the $\Ind$-completion to this adjunction, we obtain an adjunction between $\Ind(\Ccal)$ and $\Ind(\Ccal^{\lefthalfcap})$. Because the walking span is a well founded category in the sense of \cite[3.1]{henry2023does}, it follows from \cite[Thm. 1.3]{henry2023does} that we have an equivalence of categories, $E: \Ind(\Ccal^{\lefthalfcap}) \xrightarrow{\simeq} \Ind(\Ccal)^{\lefthalfcap}$. Moreover, in the diagram below, it is straightforward to see that the solid triangle commutes. Thus, it follows that $\Delta_{\Ind(\Ccal)}$ has a left adjoint, as required.
% https://q.uiver.app/#q=WzAsMyxbMiwyLCJcXG1hdGhzZntJbmR9KEMpIl0sWzAsMiwiXFxtYXRoc2Z7SW5kKENee1xcbGVmdGhhbGZjYXB9KX0iXSxbMCwwLCJcXG1hdGhzZntJbmQoQyl9XntcXGxlZnRoYWxmY2FwfSJdLFswLDEsIlxcbWF0aHNme0luZChcXERlbHRhX3tDfSl9Il0sWzEsMCwiXFxtYXRoc2Z7SW5kKFxcbWF0aHNme2NvbGltfV97Q30pfSIsMix7Im9mZnNldCI6MSwiY3VydmUiOjMsInN0eWxlIjp7ImJvZHkiOnsibmFtZSI6ImRhc2hlZCJ9fX1dLFswLDIsIlxcRGVsdGFfe1xcbWF0aHNme0luZH0oQyl9IiwyXSxbMSwyLCJFIl0sWzEsMCwiXFx0b3AiLDMseyJjdXJ2ZSI6Mywic3R5bGUiOnsiYm9keSI6eyJuYW1lIjoibm9uZSJ9LCJoZWFkIjp7Im5hbWUiOiJub25lIn19fV1d
\[\begin{tikzcd}[ampersand replacement=\&]
	{\mathsf{Ind(C)}^{\lefthalfcap}} \\
	\\
	{\mathsf{Ind(C^{\lefthalfcap})}} \&\& {\mathsf{Ind}(C)}
	\arrow["{\mathsf{Ind(\Delta_{C})}}", from=3-3, to=3-1]
	\arrow["{\mathsf{Ind(\mathsf{colim}_{C})}}"', shift right, curve={height=18pt}, dashed, from=3-1, to=3-3]
	\arrow["{\Delta_{\mathsf{Ind}(C)}}"', from=3-3, to=1-1]
	\arrow["E", from=3-1, to=1-1]
	\arrow["\top"{marking, allow upside down}, curve={height=18pt}, draw=none, from=3-1, to=3-3]
\end{tikzcd}\]
\end{proof}

\begin{rem}
The forward implication of $\Ccal$ having pushouts implies $\Ind(\Ccal)$ has pushouts does not depend on $\Ccal$ being idempotent complete, and this is the direction we will use in the following; more specifically, we deduce that if $\Ccal$ has pullbacks, then $\pt(\PSh(\Ccal))$ has pushouts. The presence of pullbacks is imposed in the hypotheses of all well-known completeness results; we examine how it can be weakened %(and explain how this does not improve the generality of the results)
in \Cref{ssec:pb}.

While the idempotent-completeness hypothesis is very mild, in that $\PSh(\Ccal)$ is equivalent to the topos of presheaves on the idempotent-completion of $\Ccal$, we can drop even this condition in the pursuit of sufficient structure to perform the construction of improvements: see \Cref{cor:indfuntamalg}.
\end{rem}

\subsection{Through the looking glass}

The other convenient feature of $\Coll(\ev_{\PSh(\Ccal)})$ is that we can unconditionally slide representables (presheaves and points) across the gluing line. To take advantage of this, when considering a $J$-covering sieve $S$ on $C$, rather than presenting it as a dense monomorphism $S \rightarrowtail \yo(C)$, we instead consider its image under the Yoneda embedding as a collection of morphisms between representables in $\PSh(\Ccal)$. Indeed, this allows us to restrict to a weakly final subdiagram or `generating presieve' for $S$ when needed.

\begin{defn}
\label{def:generatingpresieve}
Let $\Psi$ be a class of diagrams and $\Ccal$ a category. We say a sieve $S$ over $C$ in $\Ccal$ is \textbf{generated by a presieve in $\Psi$} if, when $S$ is viewed as a subcategory of $\Ccal/C$, there exists a weakly final subcategory in $\Psi$. That is, there is a collection of morphisms $\{f_i:C_i \to C\}$ generating $S$ and compatible morphisms between these forming a diagram in $\Psi$.
\end{defn}

The value of this definition is that for any presheaf $F:\Ccal\op \to \Set$ we have,
\[\lim_{(f:C' \to C) \in S} F(C') \hookrightarrow
\lim_{f_i:C_i \to C} F(C_i),\]
where on the right-hand side we compute a limit whose shape is in $\Psi$.

The last ingredient for constructing improvements from these ideas is to focus on points whose inverse image functors preserve limits indexed by these generating presieves.

\begin{prop}[Preserving limits provides improvements]
\label{prop:someimprovement}
Let $\Phi$ be a class of (limit) diagrams. Consider the inclusion $j: \Sh(\Ccal,J) \to \PSh(\Ccal)$ where $\Ccal$ has pullbacks and $J$-covering sieves are generated by presieves in $\Phi\op$. Let $p$ be a point of $\PSh(\Ccal)$ such that $p^*$ preserves $\Phi$-limits and $x \neq y \in p^*(j_*(X))$. Then $p$ has an improvement $p'$ with respect to $(x,y,m,w)$ for all $j$-dense $m:S \rightarrowtail T$ and $w \in p^*(T)$ such that ${p'}^*$ also preserves $\Phi$-limits.
\end{prop}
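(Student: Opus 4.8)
The plan is to produce the improvement by pulling the covering presieve back along $w$ on the point side of the collage, taking advantage of the fact that in $\Coll(\ev_{\PSh(\Ccal)})$ representable sheaves and representable points slide freely across the gluing line. I would first cut the data down to a manageable form. By \Cref{lem:generatorsuffices} it suffices to treat $T = \yo(C)$ representable, since the representables generate $\PSh(\Ccal)$. A $j$-dense monomorphism $m: S \rightarrowtail \yo(C)$ is then exactly the subobject presented by a $J$-covering sieve $S$ on $C$, and by hypothesis $S$ admits a final subdiagram $\{g_i: C_i \to C\}_{i \in I}$ with $I \in \Phi\op$; this is a generating presieve, and finality lets me present the sieve itself as a colimit $S \cong \colim_{i \in I} \yo(C_i)$ of shape $I$. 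Invoking \Cref{prop:lowerboundssuffice}, together with the standing assumption that $\Ccal$ has pullbacks, I can replace the single cospan $(m,w)$ by the family of cospans coming from this presieve: an improvement with respect to the presieve, closed under the pullbacks and multicomposites that $\Ccal$ supplies, dominates $(m,w)$ in the relevant preorder.

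Next I would construct $p'$. Sliding $\yo(C)$ and the representables $\yo(C_i)$ to the left turns $w$ into a morphism $w: p \to \yoco(C)$ and the presieve into a family $\{\yoco(g_i): \yoco(C_i) \to \yoco(C)\}_{i \in I}$ in $\pt(\PSh(\Ccal))\op = \Pro(\Ccal)$. Since $\Ccal$ has pullbacks, \Cref{indindind} shows $\pt(\PSh(\Ccal))$ has pushouts, so $\Pro(\Ccal)$ has pullbacks; I form the base changes $q_i := p \times_{\yoco(C)} \yoco(C_i)$ with projections $n_i: q_i \to p$ and $v_i: q_i \to \yoco(C_i)$, and then assemble them over $I$ into a single point $p'$, realized as a limit of points of shape in $\Phi$ (dually, a colimit over the final subdiagram in $\Pro(\Ccal)$). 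The universal property supplies the comparison $n: p' \to p$, while the legs $v_i$ together with the colimit presentation $S \cong \colim_{i \in I} \yo(C_i)$ produce the element $v \in {p'}^*(S)$ witnessing that $n;w$ now factors through the cover; the base-change squares give exactly $v;m = n;w$. One then checks $n;x \neq n;y$: because $n$ is obtained from base change along a genuine covering family it is epic in $\pt(\PSh(\Ccal))$ (monic in $\Pro(\Ccal)$), and this is enough to keep the images of $x$ and $y$ in ${p'}^*(j_*(X))$ apart, exactly as in the classical Deligne step.

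The main obstacle is the final clause, that ${p'}^*$ again preserves $\Phi$-limits, since this is precisely what makes the improvement reusable inside the induction of \Cref{thm:improvement}. This is where all the hypotheses are spent simultaneously: the representable points $\yoco(C_i)$ preserve every limit, $p^*$ preserves $\Phi$-limits by assumption, and $p'$ is assembled from these by a limit in $\pt(\PSh(\Ccal))$ whose shape lies in $\Phi$ precisely because the chosen subdiagram was final in $\Phi\op$. I expect the bulk of the work to be an interchange-of-limits argument showing that the $\Phi$-limits which $p'^*$ must preserve commute past the colimit presenting $S$ and reduce to $\Phi$-limits already preserved by $p^*$ and by the representables. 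I would pay particular attention to verifying that finality of the subdiagram is genuinely used, so that the reindexing over $I$ does not disturb the $\Phi$-limits in question, and that the pullbacks built from $\Ccal$ suffice to realize the required limits in $\Pro(\Ccal)$ without assuming limits that the site may lack.
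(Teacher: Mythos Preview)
Your reduction to a representable $T = \yo(C)$ and your construction of the pulled-back points $q_i = p \times_{\yoco(C)} \yoco(C_i)$ via \Cref{indindind} match the paper's opening moves. The divergence, and the gap, is in what you do next.

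You assemble the $q_i$ over $I$ into a single point $p'$ and then argue that the comparison $n: p' \to p$ is epic in $\pt(\PSh(\Ccal))$, claiming this suffices for $n;x \neq n;y$. This step is not valid: an epimorphism of points is not componentwise injective on inverse images, so there is no reason the induced map $p^*(j_*(X)) \to p'^*(j_*(X))$ should separate $x$ and $y$. Moreover, you never invoke the \emph{sheaf} condition on $j_*(X)$, and without it there is no link between the covering presieve and the values $p^*(j_*(X))$, $q_i^*(j_*(X))$ at all. (Your argument would, as written, apply equally to an arbitrary presheaf $X$, which cannot be right.) A secondary issue is that your assembled $p'$, being a colimit of shape $I \in \Phi\op$ of the $q_i$, need not preserve $\Phi$-limits: colimits of $\Phi$-limit-preserving functors over a shape in $\Phi\op$ do not generally preserve $\Phi$-limits.

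The paper does not assemble the $q_i$ at all; it instead \emph{selects one of them}. The key computation, absent from your outline, is the isomorphism
\[
p^*(j_*(X)) \;\cong\; \lim_{i} q_i^*(j_*(X)),
\]
obtained by writing $p$ as a filtered colimit over the category $\Exp(w)$ of factorizations of $w$ through representables, using the sheaf condition to rewrite each $\ev(\yoco(D),j_*(X))$ as a limit over the pulled-back sieve, and then commuting the filtered colimit past that $\Phi$-shaped limit (this is exactly where the hypothesis that $p^*$ preserves $\Phi$-limits is spent). From this limit formula it is immediate that if $x \neq y$ in $p^*(j_*(X))$ then some projection $g_i$ already separates them, so the improvement is simply $p' := q_i$ for that index. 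The preservation of $\Phi$-limits for $q_i^*$ then follows because $q_i$ is again presented by a diagram of shape $\Exp(w)$, hence by the same filtered colimit argument as for $p$.
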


\begin{rem}[Deligne's blueprint]
While it is more general, the proof of Proposition \ref{prop:someimprovement} essentially follows the strategy of part of Deligne's original proof \cite[Lemma 7.43]{johnstone1977topos} translated into our collage diagram set-up and exploiting the presence of pullbacks more efficiently through Proposition \ref{indindind}. We believe this presentation makes it much easier to follow. From a pedagogical perspective, we consider this to be an important contribution of our paper.
\end{rem}

\begin{proof}
By Lemma \ref{lem:generatorsuffices}, we need only construct improvements with respect to the dense monomorphisms corresponding to $J$-covering sieves. Therefore, suppose we are given a $J$-covering sieve $S = \{f_i:C_i \to C\}$ and an element $w \in p^*(\yo(C))$ (we omit $x,y$ from the diagram):
% https://q.uiver.app/#q=WzAsNyxbMywyLCJcXHlvKEMpIl0sWzAsMiwicCJdLFsxLDAsIlxcY2RvdCJdLFsxLDMsIlxcY2RvdCJdLFsyLDEsIlxceW8oQ19qKSJdLFszLDEsIlxcY2RvdHMiXSxbNCwxLCJcXHlvKENfaykiXSxbMSwwLCJ2IiwyLHsibGFiZWxfcG9zaXRpb24iOjcwfV0sWzIsMywiIiwxLHsic3R5bGUiOnsiYm9keSI6eyJuYW1lIjoiZG90dGVkIn0sImhlYWQiOnsibmFtZSI6Im5vbmUifX19XSxbNCwwLCJcXHlvKGZfaikiLDFdLFs2LDAsIlxceW8oZl9rKSIsMV1d
\[\begin{tikzcd}[ampersand replacement=\&]
	\& \cdot \\
	\&\& {\yo(C_i)} \& \cdots \& {\yo(C_k)} \\
	p \&\&\& {\yo(C)} \\
	\& \cdot
	\arrow["w"'{pos=0.7}, from=3-1, to=3-4]
	\arrow[dotted, no head, from=1-2, to=4-2]
	\arrow["{\yo(f_i)}"{description}, from=2-3, to=3-4]
	\arrow["{\yo(f_k)}"{description}, from=2-5, to=3-4]
\end{tikzcd}\]
Since we are working in $\Coll(\ev_{\PSh(\Ccal)})$, we can translate the representable presheaves across the diagram.
% https://q.uiver.app/#q=WzAsNyxbMywyLCJcXHlvY28oQykiXSxbMCwyLCJwIl0sWzUsMCwiXFxjZG90Il0sWzUsMywiXFxjZG90Il0sWzIsMSwiXFx5b2NvKENfaikiXSxbMywxLCJcXGNkb3RzIl0sWzQsMSwiXFx5b2NvKENfaykiXSxbMSwwLCJ6IiwyXSxbMiwzLCIiLDEseyJzdHlsZSI6eyJib2R5Ijp7Im5hbWUiOiJkb3R0ZWQifSwiaGVhZCI6eyJuYW1lIjoibm9uZSJ9fX1dLFs0LDAsIlxceW9jbyhmX2opIiwxXSxbNiwwLCJcXHlvY28oZl9rKSIsMV1d
\[\begin{tikzcd}[ampersand replacement=\&]
	\&\&\&\&\& \cdot \\
	\&\& {\yoco(C_i)} \& \cdots \& {\yoco(C_k)} \\
	p \&\&\& {\yoco(C)} \\
	\&\&\&\&\& \cdot
	\arrow["w"', from=3-1, to=3-4]
	\arrow[dotted, no head, from=1-6, to=4-6]
	\arrow["{\yoco(f_i)}"{description}, from=2-3, to=3-4]
	\arrow["{\yoco(f_k)}"{description}, from=2-5, to=3-4]
\end{tikzcd}\]
By \Cref{indindind}, $\pt(\PSh(\Ccal))\op \simeq \mathsf{Ind}(\Ccal\op)\op$ inherits pullbacks from $\Ccal$, so we can construct the pullbacks,
% https://q.uiver.app/#q=WzAsMTAsWzQsMiwiXFx5b2NvKEMpIl0sWzEsMiwicCJdLFs2LDAsIlxcY2RvdCJdLFs2LDMsIlxcY2RvdCJdLFszLDEsIlxceW9jbyhDX2kpIl0sWzQsMSwiXFxjZG90cyJdLFs1LDEsIlxceW9jbyhDX2spIl0sWzAsMSwicF9pIl0sWzIsMSwicF9rIl0sWzEsMSwiXFxjZG90cyJdLFsxLDAsInciLDJdLFsyLDMsIiIsMSx7InN0eWxlIjp7ImJvZHkiOnsibmFtZSI6ImRvdHRlZCJ9LCJoZWFkIjp7Im5hbWUiOiJub25lIn19fV0sWzQsMCwiXFx5b2NvKGZfaSkiLDFdLFs2LDAsIlxceW9jbyhmX2spIiwxXSxbNywxLCJnX2kiLDFdLFs4LDEsImdfayIsMV0sWzcsNCwiIiwwLHsiY3VydmUiOi0yfV0sWzgsNiwiIiwwLHsiY3VydmUiOi0yfV0sWzcsMCwiIiwwLHsic3R5bGUiOnsibmFtZSI6ImNvcm5lciJ9fV0sWzgsMCwiIiwwLHsic3R5bGUiOnsibmFtZSI6ImNvcm5lciJ9fV1d
\begin{equation}
\label{eq:coreppullback}
\begin{tikzcd}[ampersand replacement=\&]
	\&\&\&\&\&\& \cdot \\
	{p_i} \& \cdots \& {p_k} \& {\yoco(C_i)} \& \cdots \& {\yoco(C_k)} \\
	\& p \&\&\& {\yoco(C)} \\
	\&\&\&\&\&\& \cdot
	\arrow["w"', from=3-2, to=3-5]
	\arrow[dotted, no head, from=1-7, to=4-7]
	\arrow["{\yoco(f_i)}"{description}, from=2-4, to=3-5]
	\arrow["{\yoco(f_k)}"{description}, from=2-6, to=3-5]
	\arrow["{g_i}"{description}, from=2-1, to=3-2]
	\arrow["{g_k}"{description}, from=2-3, to=3-2]
	\arrow[curve={height=-12pt}, from=2-1, to=2-4]
	\arrow[curve={height=-12pt}, from=2-3, to=2-6]
	\arrow["\lrcorner"{anchor=center, pos=0.125, rotate=45}, draw=none, from=2-1, to=3-5]
	\arrow["\lrcorner"{anchor=center, pos=0.125, rotate=45}, draw=none, from=2-3, to=3-5]
\end{tikzcd}
\end{equation}

Since $j_*(X)$ is a sheaf, for any morphism $v:D \to C$ in $\Ccal$ we have,
\begin{align}
\label{eq:sheafexpand}
\ev(\yoco(D),j_*(X)) &\cong \lim_{f_i \in S\op} \ev(\yoco(\dom(v^*(f_i))),j_*(X)) \\
\nonumber
& \hookrightarrow \lim_{f_i \in {S'}\op} \ev(\yoco(\dom(v^*(f_i))),j_*(X)),
\end{align}
where $S'$ is a generating sieve for $S$ of shape in $\Phi\op$.

Let $\Exp(w)$ be the category of factorizations of $w$ through representables. That is, the category whose objects are pairs of composable morphisms
\[p \xrightarrow[u]{} \yoco(D) \xrightarrow[v]{} \yoco(C)\]
such that $u;v = w$ (we denote this object $(u,\yoco(D),v)$), and whose morphisms are morphisms between the middle objects making the resulting triangles commute. The dual of $\Exp(w)$ is an initial subcategory of the filtered diagram defining $p$, so colimits over it still commute with limits in $\Phi$. By the hypothesis on $S$. Thus we have:
\begin{align}
\nonumber
p^*(j_*(X)) &\cong \colim_{(u,\yoco(D),v) \in \Exp(w)\op} \hspace{3pt} \ev(\yoco(D),j_*(X)) \\
\label{eq:sheaf}
&\cong \colim_{(u,\yoco(D),v) \in \Exp(w)\op} \hspace{3pt} \lim_{f_i \in S\op} \ev(\yoco(\dom(v^*(f_i))),j_*(X))
& \text{ by \eqref{eq:sheafexpand}}\\
\label{eq:inject}
&\hookrightarrow \colim_{(u,\yoco(D),v) \in \Exp(w)\op} \hspace{3pt} \lim_{f_i \in {S'}\op} \ev(\yoco(\dom(v^*(f_i))),j_*(X))\\
\label{eq:commute}
&\cong \lim_{f_i \in {S'}\op} \hspace{3pt} \colim_{(u,\yoco(D),v) \in \Exp(w)\op} \ev(\yoco(\dom(v^*(f_i))),j_*(X)) \\
\nonumber
&\cong  \lim_{f_i \in S} p_i^*(j_*(X)).
\end{align}
where the injection \eqref{eq:inject} is constructed as a colimit of the injections in \eqref{eq:sheafexpand}, and remains injective to the fact that $\Exp(w)$ is filtered, so this colimit preserves monomorphisms.

In particular, there must exist some index $i$ for which $g_i(x) \neq g_i(y)$ in $p_i^*(j_*(X))$, and this $p_i$ is thus the desired improvement. Note that $p_i^*$ preserves $\Phi$-limits because it too is presented by a diagram of shape $\Exp(w)$.
\end{proof}

\begin{rem}[On the sharpness of the assumptions]
\label{rem:relax}
We don't need either of the intermediate steps in \eqref{eq:sheaf} and \eqref{eq:commute} to be bijections; injections would be enough. As such, this argument would still work with:
\begin{itemize}
    \item A $J$-separated presheaf in place of the sheaf $j_*(X)$; as such, when we construct enough points of $\Sh(\Ccal,J)$ using this result we will actually get a set of points which are faithful on the wider class of $J$-separated presheaves. This is not surprising: the sheafification functor is always faithful on separated presheaves, so this observation cannot lead to a stronger result.
    \item $p$ only weakly preserving $\Phi$-limits in the sense that the comparison map is injective. Little enough is known (for the time being) about the sufficient conditions for colimits to weakly commute with limits in this way that it is unclear whether this expands our base of examples.
\end{itemize}
%In spite of these observations, we have the hypotheses of Proposition \ref{prop:someimprovement} are surely stronger than one should strictly need to find improvements.
\end{rem}

\begin{rem}[The main special case]
The principal special case of \Cref{prop:someimprovement} is when $\Phi$ is a class of discrete diagrams, so that \eqref{eq:sheafexpand} becomes the injection of the limit appearing in the sheaf condition into a product (the special case of finite products is used in \cite[Lemma 7.43]{johnstone1977topos}). Complementary examples are challenging to come by, since the inverse image functor $p^*$ always preserves finite limits, and for simple candidates of $\Phi$ (such as chains, cofiltered diagrams or connected diagrams of a bounded cardinality), preserving finite limits and $\Phi$-limits is equivalent to preserving finite limits and sufficiently large products.
\end{rem}

\subsection{Deligne and Makkai-Reyes}
\label{ssec:makkairecovery}

By either directly applying the results derived so far or manipulating their proofs, we can recover all known completeness results for topoi. The most straightforward is Deligne's original completeness result, exploiting the special case of Proposition \ref{prop:someimprovement} where $\Phi$ is the class of finite diagrams.

\begin{cor}[Deligne, {\cite[Exposé VI, 9.0]{bourbaki2006theorie}}]
\label{cor:deligne}
Let $(\Ccal,J)$ be a site where $\Ccal$ has pullbacks and $J$-covering sieves are finitely generated (in other words, $\Sh(\Ccal,J)$ is a locally coherent topos). Then assuming Zorn's Lemma, $\Sh(\Ccal,J)$ has enough points.
\end{cor}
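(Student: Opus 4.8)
The plan is to apply the core theorem (\Cref{thm:improvement}) to the canonical inclusion $j: \Sh(\Ccal,J) \rightarrowtail \PSh(\Ccal)$. The ambient presheaf topos $\PSh(\Ccal)$ has enough points, for instance the representable points $\yoco(C)$, so by \Cref{thm:improvement} it suffices to verify the improvement hypothesis: that every point $p$ of $\PSh(\Ccal)$ admits an improvement with respect to every datum $(x,y,m,w)$. These improvements will come from \Cref{prop:someimprovement}, taking $\Phi$ to be the class of \emph{finite} limit diagrams.

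To invoke \Cref{prop:someimprovement} I must check its two standing hypotheses on the site. First, $\Ccal$ has pullbacks by assumption. Second, $J$-covering sieves must have final subdiagrams in $\Phi\op$, and this is exactly the content of the finite-generation hypothesis: a covering sieve on $C$ generated by finitely many arrows $\{f_1,\dots,f_n\}$, viewed as a diagram, admits a finite final subdiagram, namely the one spanned by the generators together with their pairwise pullbacks (available since $\Ccal$ has pullbacks); and since the class of finite diagrams is self-dual, this subdiagram lies in $\Phi\op$. The decisive simplification in this case is that the final hypothesis of \Cref{prop:someimprovement} — that $p^*$ preserve $\Phi$-limits — is free: by definition every point preserves finite limits. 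Consequently \Cref{prop:someimprovement} produces, for an arbitrary point $p$ and arbitrary data $(x,y,m,w)$, an improvement $p'$, and the side condition that $(p')^*$ again preserve finite limits is likewise automatic.

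Having established the improvement hypothesis for all points and all data, I would conclude directly by \Cref{thm:improvement}: since $\PSh(\Ccal)$ has enough points, so does $\Sh(\Ccal,J)$, invoking Zorn's Lemma as the statement permits. The transfinite construction inside the proof of \Cref{thm:improvement} forms filtered colimits of points at limit stages, but with $\Phi$ the class of finite diagrams this is harmless, since every object of $\pt(\PSh(\Ccal)) \simeq \Ind(\Ccal\op)$ is already a genuine point and hence automatically preserves finite limits; the finite-limit-preservation that must persist through the induction is therefore never in doubt at any stage. The only genuinely substantive step is the identification of the finite final subdiagram of a finitely generated covering sieve, which is where the coherent, that is to say finitary, character of the hypothesis enters and what pins down $\Phi$ as the class of finite diagrams; everything else is a routine instance of the general machinery, precisely because preservation of finite limits is free for points. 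In this sense Deligne's theorem is the "free" special case of \Cref{prop:someimprovement}.
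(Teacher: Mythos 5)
Your proposal is correct and follows exactly the paper's own route: the paper's proof of \Cref{cor:deligne} is precisely the observation that with $\Phi$ the class of finite diagrams, every point of $\PSh(\Ccal)$ preserves $\Phi$-limits for free, so \Cref{prop:someimprovement} supplies all improvements and \Cref{thm:improvement} applies to $\Sh(\Ccal,J) \rightarrowtail \PSh(\Ccal)$. Your additional verification that a finitely generated covering sieve admits a finite final subdiagram (generators together with their pairwise pullbacks) is a detail the paper leaves implicit, and it is correctly handled.
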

\begin{proof}
By Proposition \ref{prop:someimprovement}, since all points preserve finite limits, all improvements exist, so we may apply Theorem \ref{thm:improvement} to the inclusion $\Sh(\Ccal,J) \rightarrowtail \PSh(\Ccal)$ to deduce the existence of enough points of $\Sh(\Ccal,J)$.
\end{proof}

\begin{para}[Not all points are representable!]
\label{para:notallpoints}
As we observed in Section \ref{ssec:presheafprelims}, we can always take the starting collection of enough points of $\PSh(\Ccal)$ to be the representables, whose inverse image functors preserve all small limits. If improvements are constructed using Proposition \ref{prop:someimprovement}, this remains true for the points $p_1,p_2,\dotsc$ constructed in the induction procedure in the proof of Theorem \ref{thm:improvement} (the points $p_i$ constructed in \eqref{eq:coreppullback} will also be representables). The obstacle to greater generality, therefore, is that the point $p_{\omega}$ constructed at the first limit ordinal need not preserve the same limits that the $p_n$ do. There are two solutions to this problem. The first is to constrain the site so that $p_{\omega}$ is the end of the procedure. To do so efficiently, we can take advantage of Proposition \ref{prop:lowerboundssuffice}.
\end{para}

\begin{para}[From topologies to bases]
Consider the special case of Proposition \ref{prop:lowerboundssuffice} where $\Ecal = \PSh(\Ccal)$, $\Fcal = \Sh(\Ccal,J)$ and $p = \yoco(C)$ is a representable point. Then for any $w:C \to D$ and $J$-covering sieve $m:S' \rightarrowtail \yo(D)$, we can construct the diagram:
% https://q.uiver.app/#q=WzAsOSxbMywyLCJcXHlvKEMpIl0sWzMsMSwiUyJdLFswLDIsIlxceW9jbyhDKSJdLFsxLDAsIlxcY2RvdCJdLFsxLDMsIlxcY2RvdCJdLFswLDEsInAnIl0sWzMsMywial8qKFgpIl0sWzQsMiwiXFx5byhEKSJdLFs0LDEsIlMnIl0sWzEsMCwid14qKG0pIiwyLHsic3R5bGUiOnsidGFpbCI6eyJuYW1lIjoibW9ubyJ9fX1dLFsyLDAsIiIsMSx7ImxhYmVsX3Bvc2l0aW9uIjo3MCwibGV2ZWwiOjIsInN0eWxlIjp7ImhlYWQiOnsibmFtZSI6Im5vbmUifX19XSxbMyw0LCIiLDEseyJzdHlsZSI6eyJib2R5Ijp7Im5hbWUiOiJkb3R0ZWQifSwiaGVhZCI6eyJuYW1lIjoibm9uZSJ9fX1dLFs1LDIsIm4iLDIseyJzdHlsZSI6eyJib2R5Ijp7Im5hbWUiOiJkYXNoZWQifX19XSxbNSwxLCJ2IiwxLHsibGFiZWxfcG9zaXRpb24iOjcwLCJzdHlsZSI6eyJib2R5Ijp7Im5hbWUiOiJkYXNoZWQifX19XSxbMiw2LCJ5IiwxLHsibGFiZWxfcG9zaXRpb24iOjcwLCJvZmZzZXQiOjJ9XSxbMiw2LCJ4IiwxLHsibGFiZWxfcG9zaXRpb24iOjcwLCJvZmZzZXQiOi0xfV0sWzAsNywidyIsMV0sWzgsNywibSIsMCx7InN0eWxlIjp7InRhaWwiOnsibmFtZSI6Im1vbm8ifX19XSxbMSw4XSxbMSw3LCIiLDEseyJzdHlsZSI6eyJuYW1lIjoiY29ybmVyIn19XV0=
\[\begin{tikzcd}[ampersand replacement=\&]
	\& \cdot \\
	{p'} \&\&\& S \& {S'} \\
	{\yoco(C)} \&\&\& {\yo(C)} \& {\yo(D)} \\
	\& \cdot \&\& {j_*(X)}
	\arrow["{w^*(m)}"', tail, from=2-4, to=3-4]
	\arrow[Rightarrow, no head, from=3-1, to=3-4]
	\arrow[dotted, no head, from=1-2, to=4-2]
	\arrow["n"', dashed, from=2-1, to=3-1]
	\arrow["v"{description, pos=0.7}, dashed, from=2-1, to=2-4]
	\arrow["y"{description, pos=0.7}, shift right=2, from=3-1, to=4-4]
	\arrow["x"{description, pos=0.7}, shift left, from=3-1, to=4-4]
	\arrow["w"{description}, from=3-4, to=3-5]
	\arrow["m", tail, from=2-5, to=3-5]
	\arrow[from=2-4, to=2-5]
	\arrow["\lrcorner"{anchor=center, pos=0.125}, draw=none, from=2-4, to=3-5]
\end{tikzcd}\]
whence $(\id_C,w^*(m))$ is a lower bound for $(w,m)$ in the preorder of Proposition \ref{prop:lowerboundssuffice}. As such, we can always construct a lower set determined by $J$-covering sieves over $C$, whence the following definition.
\end{para}

\begin{defn}[Cotree and multicomposites]\label{defn:cotree}
For our purposes, a \textbf{cotree}\footnote{We call this a cotree so that our later terminology will be compatible with that of Espindola and Kanalas in \cite{espindola2023every}.} is a poset with a maximal element in which every interval $[x,y] = \{z \mid x \leq z \leq y\}$ is (empty or) isomorphic to a finite total order, as illustrated in the following Hasse diagram:
% https://q.uiver.app/#q=WzAsMTEsWzMsMywiQyJdLFsyLDIsIlxcYnVsbGV0Il0sWzMsMiwiXFxidWxsZXQiXSxbNCwyLCJcXGJ1bGxldCJdLFsxLDEsIlxcYnVsbGV0Il0sWzIsMSwiXFxidWxsZXQiXSxbNCwxLCJcXGJ1bGxldCJdLFs1LDEsIlxcYnVsbGV0Il0sWzAsMCwiXFxidWxsZXQiXSxbMSwwLCJcXGJ1bGxldCJdLFsyLDAsIlxcYnVsbGV0Il0sWzEsMF0sWzIsMF0sWzMsMF0sWzQsMV0sWzUsMV0sWzYsM10sWzcsM10sWzgsNF0sWzksNF0sWzEwLDRdXQ==
\[\begin{tikzcd}[ampersand replacement=\&]
	\bullet \& \bullet \& \bullet \\
	\& \bullet \& \bullet \&\& \bullet \& \bullet \\
	\&\& \bullet \& \bullet \& \bullet \\
	\&\&\& \bullet
	\arrow[from=3-3, to=4-4]
	\arrow[from=3-4, to=4-4]
	\arrow[from=3-5, to=4-4]
	\arrow[from=2-2, to=3-3]
	\arrow[from=2-3, to=3-3]
	\arrow[from=2-5, to=3-5]
	\arrow[from=2-6, to=3-5]
	\arrow[from=1-1, to=2-2]
	\arrow[from=1-2, to=2-2]
	\arrow[from=1-3, to=2-2]
\end{tikzcd}\]
Note that there is no restriction on the width of the cotree. A \textbf{cotree in a category $\Ccal$} is a diagram indexed by a cotree. For each object in such a diagram, we call the edges entering it its \textit{predecessors}. The \textbf{multicomposite} of a cotree is the presieve obtained by taking the composites of the maximal branches in the cotree, or if the cotree is trivial (of height $0$), consists of the identity morphism on the root.
\end{defn}

\begin{defn}[Warp] \label{defn:basis}
Let $(\Ccal,J)$ be a site. A \textbf{warp} for $J$ is a collection $\Bcal(C)$ of $J$-covering presieves over $C$ for each $C$ such that:
\begin{itemize}
    \item the pullback of a sieve generated by a presieve in $\Bcal$ contains a presieve in $\Bcal$, and
    \item every $J$-covering sieve $S'$ over $C'$ contains a presieve constructed as the multicomposite of a cotree in which the predecessors of each node form a presieve in $\Bcal$.
\end{itemize}
\end{defn}

Note that we can identify the multicomposites appearing in \Cref{defn:basis} with those defined earlier in Definition \ref{defn:multicomposite} by identifying a presieve with the sieve it generates (as a subobject in $\PSh(\Ccal)$).

\begin{rem}[Comparison with pretopologies]
Our Definition \ref{defn:basis} has weaker conditions than usual notions of basis or pretopology (see \cite[Definition III.2.2]{sheavesingeometry}): we do not demand that elements of a warp be closed under pullbacks or multicomposition since closing under these operations recovers $J$. It is hard to claim novelty here. The important thing is that this reduces the number of sieves needed to verify projectivity, which was the goal!
\end{rem}

\begin{defn}[$\omega$-woven sites] \label{defn:woven}
A site $(\Ccal,J)$ is \textbf{$\omega$-woven} if $J$ has a warp $\Bcal$ such that $\Bcal(C)$ is countable at every object $C$.
\end{defn}

\begin{para}[Weaving nomenclature]
The choice of terminology comes from textile production, where the \textit{warp} constitutes the longitudinal threads around which the \textit{weft} is \textit{woven}. We have deliberately selected terms diverging from existing terminology (from topology, for example) to avoid potential confusion.
\end{para}

\begin{thm}
\label{thm:supermakkai}
Let $(\Ccal,J)$ be an $\omega$-woven site where $\Ccal$ has pullbacks. Then assuming the axiom of \textit{dependent countable} choice, $\Sh(\Ccal,J)$ has enough points.
\end{thm}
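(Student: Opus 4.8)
The plan is to run the iterative-improvement machinery of \Cref{thm:improvement} on the inclusion $j\colon\Sh(\Ccal,J)\rightarrowtail\PSh(\Ccal)$, but to exploit the countability packaged in the warp in order to replace the transfinite double induction of that proof with a single $\omega$-indexed sequence, thereby trading Zorn's Lemma for dependent countable choice. The guiding observation is that, because $\Ccal$ has pullbacks, \Cref{indindind} makes $\pt(\PSh(\Ccal))\op\simeq\Ind(\Ccal\op)\op$ into a category with pullbacks computed objectwise from those of $\Ccal$; hence the improvements supplied by \Cref{prop:someimprovement} out of a representable point are themselves representable (being pullbacks of cospans of representables), and so every point arising in the construction preserves \emph{all} limits. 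Since limit preservation is exactly what breaks at limit stages in the general argument (cf.\ \Cref{para:notallpoints}) and forces the secondary induction there, remaining within representables is what will let us dispense with it.

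First I would fix the parallel pair of $\Fcal$ to be separated and, as the representables are enough points of $\PSh(\Ccal)$, choose a representable point $p_0=\yoco(C_0)$ with $x\neq y\in p_0^*(j_*(X))$ separating its image under $j_*$, via \Cref{lem:manifestenoughpoints}. Next I would cut the projectivity requirements down to size. By \Cref{crly:injchar} the point we construct must be projective against all $J$-covering sieves; by \Cref{prop:lowerboundssuffice} together with the two warp axioms this reduces to projectivity against the warp presieves alone. Indeed, warp axiom~(2) exhibits every covering sieve above a multicomposite of a cotree of warp presieves, which \Cref{prop:lowerboundssuffice} absorbs by working up the cotree, while warp axiom~(1) combined with the base-change reduction of the ``From topologies to bases'' discussion lets us pull any $w\in\yoco(C)^*(\yo(D))$ back to a warp presieve belonging to the \emph{countable} set $\Bcal(C)$. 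Consequently the whole task list attached to a representable point $\yoco(C)$ is countable.

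The construction then runs as a single dovetailed $\omega$-sequence. I would build a chain $p_0\leftarrow p_1\leftarrow p_2\leftarrow\cdots$ of representable points in $\Coll(\ev_{\PSh(\Ccal)})/p_0$, each a prospective improvement so that $n;x\neq n;y$ persists at every stage, carrying alongside it a countable, dynamically growing queue of requirements of the form ``make the current point projective against the warp presieve $R\in\Bcal(C_m)$''. At step $n$ I would discharge the next pending requirement by invoking \Cref{prop:someimprovement} (with $\Phi$ the class of all small limits, which representable points preserve) to manufacture an improvement $p_{n+1}=\yoco(C_{n+1})$, and then enqueue the countably many fresh requirements contributed by $\Bcal(C_{n+1})$. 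Fixing an enumeration of each $\Bcal(C)$ and a pairing function yields a fair schedule in which every requirement is eventually served; the successive choices of improvement (and of the separating index inside \Cref{prop:someimprovement}) are precisely what dependent countable choice furnishes, and as we never proceed past $\omega$ no appeal to Zorn is made. Finally I would set $q:=\colim_n p_n$ in $\pt(\PSh(\Ccal))$ and invoke \Cref{lem:limitpoint}: for each warp presieve (over any $\yo(D)$) and each element of $q^*(\yo(D))$, represented at some finite stage, fairness provides a later stage supplying the lift, so $q$ is projective against every warp presieve, hence against every covering sieve by the reduction above, hence a point of $\Sh(\Ccal,J)$ by \Cref{crly:injchar}. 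It still separates the chosen pair because $x\neq y$ survives the filtered colimit, no transition map identifying them as each stage is a genuine improvement.

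The step I expect to be the main obstacle is verifying that this single fair schedule really satisfies the hypothesis of \Cref{lem:limitpoint}, namely that for every $n\colon p'\to p$ in the diagram and every $w\in{p'}^*(T)$ there is a further improvement in the diagram, given that the objects $C_n$, and hence the requirements, are only revealed as the sequence is assembled. The crux is to confirm that the base-change reduction via warp axiom~(1) genuinely collapses the universally quantified $w$ down to the countable set $\Bcal(C_n)$ at each stage, so that the queue stays countable and a fair schedule exists; once this is nailed down, the formation of the colimit and the survival of the separating pair are routine.
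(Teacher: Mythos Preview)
Your proposal is correct and follows essentially the same route as the paper's proof: start from a representable point, use \Cref{indindind} to keep the pullback-computed improvements of \Cref{prop:someimprovement} representable, dovetail the countably many warp requirements at each successive $C_n$ via a pairing function (the paper uses an explicit $\xi:\omega\times\omega\to\omega$ with $\xi(a,b)\geq a$), build the $\omega$-chain with dependent countable choice, and then verify projectivity of the colimit against arbitrary $J$-covers by pulling back to a warp cotree and inducting on its (finite) height before appealing to \Cref{lem:limitpoint}. Your identification of the cotree-height induction as the point needing care matches exactly where the paper spends its effort.
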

\begin{proof}
Let $\xi:\omega \times \omega \to \omega$ be a bijection such that $\xi(a,b) \geq a$ for every $b$.

For a given representable point $\yoco(C)$ and $x \neq y \in \yoco(C)(j_*(X))$, we can proceed with the inductive construction in the proof of Theorem \ref{thm:improvement} where improvements are obtained using Proposition \ref{prop:someimprovement}, as follows. Set $\yoco(C_0) := \yoco(C)$ and $n_{0,0} := \id_{\yoco(C)}$. At the $(k+1)$th step, let $(a,b) = \xi^{-1}(k)$ and define $\yoco(C_{k+1})$ and $n_{k+1,k}$ via the improvement diagram:
% https://q.uiver.app/#q=WzAsNyxbMCwyLCJcXHlvY28oQ19rKSJdLFsxLDAsIlxcY2RvdCJdLFsxLDMsIlxcY2RvdCJdLFszLDMsImpfKihYKSJdLFszLDIsIlxceW9jbyhDX2EpIl0sWzMsMSwiU19iIl0sWzAsMSwiXFx5b2NvKENfe2srMX0pIl0sWzEsMiwiIiwxLHsic3R5bGUiOnsiYm9keSI6eyJuYW1lIjoiZG90dGVkIn0sImhlYWQiOnsibmFtZSI6Im5vbmUifX19XSxbMCwzLCJuX3trLDB9IDt5IiwxLHsibGFiZWxfcG9zaXRpb24iOjcwLCJvZmZzZXQiOjJ9XSxbMCwzLCJuX3trLDB9IDt4IiwxLHsibGFiZWxfcG9zaXRpb24iOjcwLCJvZmZzZXQiOi0xfV0sWzAsNCwibl97ayxhfSIsMV0sWzUsNCwibV9iIiwwLHsic3R5bGUiOnsidGFpbCI6eyJuYW1lIjoibW9ubyJ9fX1dLFs2LDUsInYiLDEseyJzdHlsZSI6eyJib2R5Ijp7Im5hbWUiOiJkYXNoZWQifX19XSxbNiwwLCJuX3trKzEsa30iLDIseyJzdHlsZSI6eyJib2R5Ijp7Im5hbWUiOiJkYXNoZWQifX19XV0=
\[\begin{tikzcd}[ampersand replacement=\&]
	\& \cdot \\
	{\yoco(C_{k+1})} \&\&\& {S_b} \\
	{\yoco(C_k)} \&\&\& {\yo(C_a)} \\
	\& \cdot \&\& {j_*(X)}
	\arrow[dotted, no head, from=1-2, to=4-2]
	\arrow["{n_{k,0} ;y}"{description, pos=0.7}, shift right=2, from=3-1, to=4-4]
	\arrow["{n_{k,0} ;x}"{description, pos=0.7}, shift left, from=3-1, to=4-4]
	\arrow["{n_{k,a}}"{description}, from=3-1, to=3-4]
	\arrow["{m_b}", tail, from=2-4, to=3-4]
	\arrow["v"{description}, dashed, from=2-1, to=2-4]
	\arrow["{n_{k+1,k}}"', dashed, from=2-1, to=3-1]
\end{tikzcd}\]
where $n_{k,a}$ is the composite of $n_{k,k-1};n_{k-1,k-2};\cdots; n_{a+1,a}$ and $m_b$ is the $b$th sieve in the warp of $J$ at $\yo(C_a)$. We need dependent countable choice to complete the construction, since Proposition \ref{prop:someimprovement} requires us to make a choice of improvement at each step.

We now show that this indexing acrobatics ensures that the colimit $p_{\omega}$ of the resulting inductively constructed diagram is projective with respect to all $J$-covering sieves. Suppose we are given a $J$-covering sieve $m':S' \rightarrowtail\yo(C')$ and an element of $p_{\omega}^*(\yo(C'))$; the latter is represented by a morphism $w:\yoco(C_a) \to \yo(C')$. By assumption, the pullback $w^*(m')$ contains a sieve $m$ which can be presented as the multicomposite of a cotree of presieves in $\Bcal$. We proceed by induction on the height of the multicomposite as follows: if the cotree has height $0$, $m$ is the maximal sieve, so projectivity is trivial. Otherwise, suppose that we have proved the result for cotrees of height $h$ and we are given a cotree of height $h+1$. the first layer of the cotree is the $b$th sieve in $\Bcal(C_a)$, whence the diagram contains an improvement ($C_{\gamma}$i say) with respect to $(n_{a,0};x,n_{a,0};y,m_b,\id_{C_a})$. To continue the induction, we take the pullback layer by layer of the subcotree corresponding to the predecessor chosen by this improvement to produce a cotree over $C_{\gamma}$, substituting the first pulled back sieve for one in $\Bcal(C_{\gamma})$ before proceeding to the next layer; this yields a suitable cotree of smaller height over $C_{\gamma}$, so we are done by the induction hypothesis. This is all that is required by Lemma \ref{lem:limitpoint}.
\end{proof}

As a special case, we recover the theorem of Makkai and Reyes regarding so-called `separable topoi'.

\begin{cor}[{\cite[Theorem 6.2.4, page 180]{makkai2006first}}]
\label{cor:makkai}
Let $(\Ccal,J)$ where $\Ccal$ is a countable category with pullbacks and $J$ is generated by a countable family of sieves. Then $\Sh(\Ccal,J)$ has enough points.
\end{cor}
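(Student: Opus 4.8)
The plan is to exhibit $(\Ccal,J)$ as an $\omega$-woven site and then invoke \Cref{thm:supermakkai}. Write the countable generating family as $\{R_\lambda\}_{\lambda\in\Lambda}$, where each $R_\lambda$ is a sieve on an object $C_\lambda$ and $\Lambda$ is countable. I would define a candidate warp by setting
\[ \Bcal(C) \;:=\; \{\, f^\ast R_\lambda \;:\; \lambda\in\Lambda,\ f\in\Ccal(C,C_\lambda)\,\}, \]
each $f^\ast R_\lambda$ being regarded as a presieve over $C$. Every such presieve is $J$-covering, since $R_\lambda\in J(C_\lambda)$ and $J$ is stable under pullback. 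As $\Lambda$ is countable and each hom-set $\Ccal(C,C_\lambda)$ is countable (the category $\Ccal$ being countable), the set $\Bcal(C)$ is countable at every $C$, which is the defining requirement of $\omega$-wovenness (\Cref{defn:woven}) once we confirm that $\Bcal$ is a warp.

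The first warp axiom is essentially free. The pullback of $f^\ast R_\lambda$ along a morphism $g\colon D\to C$ is $g^\ast f^\ast R_\lambda = (f g)^\ast R_\lambda$, which is itself a member of $\Bcal(D)$ and so a fortiori contains a presieve in $\Bcal$. In other words, $\Bcal$ is closed under pullback of the generated sieves on the nose, the identity $g^\ast f^\ast = (fg)^\ast$ being the only computation involved.

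The substance of the argument is the second warp axiom: every $J$-covering sieve must contain the multicomposite of a cotree whose node-predecessors form presieves in $\Bcal$ (\Cref{defn:cotree}). I would prove this by introducing the class $\mathcal{K}$ of sieves that contain such a $\Bcal$-cotree multicomposite, and showing that $\mathcal{K}$ is a Grothendieck topology containing each generator $R_\lambda$; since $J$ is the least topology containing the $R_\lambda$, this yields $J\subseteq\mathcal{K}$, which is exactly the axiom (and since conversely every member of $\mathcal{K}$ contains a $J$-covering multicomposite, in fact $\mathcal{K}=J$, reconfirming that the $\Bcal$-presieves are genuinely $J$-covering). The generators lie in $\mathcal{K}$ via the height-one cotree whose single layer is $R_\lambda=\id_{C_\lambda}^\ast R_\lambda\in\Bcal(C_\lambda)$, and maximal sieves lie in $\mathcal{K}$ via the trivial height-zero cotree; monotonicity is immediate, since enlarging a sieve preserves the containment of a multicomposite. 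Stability of $\mathcal{K}$ under pullback is where the first warp axiom does its work: given a cotree over $C$ witnessing $S\in\mathcal{K}(C)$ and a morphism $g\colon D\to C$, I would pull the cotree back layer by layer, at each node replacing the pulled-back presieve by a contained presieve of $\Bcal$ (available by the first axiom), exactly as in the induction in the proof of \Cref{thm:supermakkai}; the resulting cotree over $D$ has multicomposite inside $g^\ast S$.

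The delicate point, and the step I expect to be the main obstacle, is the transitivity (local character) axiom for $\mathcal{K}$: given $S\in\mathcal{K}(C)$ with witnessing cotree of multicomposite $P\subseteq S$, and a sieve $T$ with $f^\ast T\in\mathcal{K}$ for every $f\in P$, one must assemble a single cotree over $C$ with multicomposite contained in $T$. The natural move is to graft, onto each maximal leaf carrying the composite $f\in P$, the cotree over $\dom(f)$ witnessing $f^\ast T\in\mathcal{K}$; a maximal branch of the grafted tree then composes to $f\circ g$ with $g$ ranging over a multicomposite of $f^\ast T$, so that $f\circ g\in T$. What needs care is verifying that this concatenation is still a cotree in the sense of \Cref{defn:cotree} -- every interval remains a finite total order, being a concatenation of two such intervals across the shared grafting node, and every node still sits at finite height above the root -- and that the predecessor-presieves are unchanged and hence still lie in $\Bcal$. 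Granting this, $\mathcal{K}$ is a Grothendieck topology, $J\subseteq\mathcal{K}$, and $(\Ccal,J)$ is $\omega$-woven, so that \Cref{thm:supermakkai} furnishes enough points for $\Sh(\Ccal,J)$. (The countable choices needed to run \Cref{thm:supermakkai} are precisely the dependent countable choices already built into its hypotheses.)
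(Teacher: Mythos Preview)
Your proposal is correct and follows the same route as the paper: define $\Bcal(C)$ to be all pullbacks $f^{\ast}R_{\lambda}$ of the countable generating family, observe this is countable at each object, and invoke \Cref{thm:supermakkai}. The paper's proof is a single sentence asserting that this collection forms the required countable warp; you have additionally spelled out the verification of the two warp axioms (in particular the minimality argument via the auxiliary topology $\mathcal{K}$ and cotree grafting), which the paper leaves implicit.
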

\begin{proof}
It suffices to observe that the collection of all pullbacks of the generating sieves at each object is countable, so forms the countable warp required to apply Theorem \ref{thm:supermakkai}.
\end{proof}

%The ordering of the sieves in the warp did not particularly matter for the purposes of Theorem \ref{thm:supermakkai}. %However, we could have assumed without loss of generality that $S_{n} \subseteq S_{m}$ for $n > m$, since finite intersections of covering sieves are covering sieves. We will generalize this in the next section.

\subsection{Espíndola-Kanalas} \label{ssec:EspKan}
The second of the options alluded to in \ref{para:notallpoints} for finding points which preserve sufficient limits is to construct the diagram in the proof of Theorem \ref{thm:improvement} using limits in $\Ccal$ rather than colimits in $\pt(\Ccal)$. As long as $\Ccal$ has the required limits, we can do this recursively until the diagram of points produced is sufficiently filtered for their colimit in $\pt(\PSh(\Ccal))$ to have inverse image functors which preserve the desired limits. For reasons that we will explain in Example \ref{exa:binaryfan}, this extension requires us to abandon the linear induction, instead constructing a cotree of potential improvements from which we only select an overall improvement after the fact. First we need to generalize the ingredients encountered in the last section.

\begin{defn}[Regular cardinal]
Recall that a cardinal $\kappa$ is \textbf{regular} if it is equal to its own cofinality, which is to say that any chain of ordinals below $\kappa$ whose colimit is $\kappa$ has cardinality $\kappa$.
\end{defn}

For our purposes, the relevant consequences are that:
\begin{itemize}
    \item a $\kappa$-indexed diagram is $\kappa$-filtered, meaning that $\kappa$-indexed colimits commute with limits of size $<\!\kappa$, and
    \item we can construct a bijection $\xi:\kappa \times \kappa \to \kappa$ such that $\xi(\alpha,\beta) \geq \alpha$ for all $\beta$, as we did for the case $\kappa = \omega$.
\end{itemize}

\begin{defn}[$\kappa$-cotree and multicomposites] \label{defn:kappacotree}
For any cardinal $\kappa$, a \textbf{${\kappa}$-cotree} is a poset with a maximal element $1$ in which every interval $[x,y] = \{z \mid x \leq z \leq y\}$ is either empty or isomorphic to the dual of an ordinal $\lambda < \kappa$. The cotrees of Definition \ref{defn:cotree} are $\omega$-cotrees with this convention.
%Old diagram with implicit long branches and lines across successors:
%https://q.uiver.app/#q=WzAsMTMsWzMsMywiQyJdLFsyLDIsIlxcYnVsbGV0Il0sWzMsMiwiXFxidWxsZXQiXSxbNCwyLCJcXGJ1bGxldCJdLFsxLDEsIlxcYnVsbGV0Il0sWzIsMSwiXFxidWxsZXQiXSxbNCwxLCJcXGJ1bGxldCJdLFs1LDEsIlxcYnVsbGV0Il0sWzAsMCwiXFx2ZG90cyJdLFsxLDAsIlxcdmRvdHMiXSxbMiwwLCJcXHZkb3RzIl0sWzQsMCwiXFx2ZG90cyJdLFszLDAsIlxcdmRvdHMiXSxbMSwwXSxbMiwwXSxbMywwXSxbNCwxXSxbNSwxXSxbNiwzXSxbNywzXSxbOCw0XSxbOSw0XSxbMTAsNF0sWzExLDZdLFsxMiw2XSxbMTMsMTUsIiIsMCx7ImxldmVsIjoxLCJzdHlsZSI6eyJoZWFkIjp7Im5hbWUiOiJub25lIn19fV0sWzE4LDE5LCIiLDIseyJsZXZlbCI6MSwic3R5bGUiOnsiaGVhZCI6eyJuYW1lIjoibm9uZSJ9fX1dLFsyNCwyMywiIiwyLHsibGV2ZWwiOjEsInN0eWxlIjp7ImhlYWQiOnsibmFtZSI6Im5vbmUifX19XSxbMjAsMjIsIiIsMCx7ImxldmVsIjoxLCJzdHlsZSI6eyJoZWFkIjp7Im5hbWUiOiJub25lIn19fV0sWzE2LDE3LCIiLDAseyJsZXZlbCI6MSwic3R5bGUiOnsiaGVhZCI6eyJuYW1lIjoibm9uZSJ9fX1dXQ==

A $\kappa$-cotree in a category $\Ccal$ is a diagram indexed by a $\kappa$-cotree such that the image of a limit ordinal in any branch is the limit of the images of objects below it\footnote{Diagrams satisfying this limit condition are usually called \textit{continuous} diagrams but we drop the adjective since we will not consider more general cotree-indexed diagrams.}.

The \textbf{multicomposite} of a $\kappa$-cotree in $\Ccal$ having limits of $<\!\kappa$-cochains is the sieve over the root constructed from the limits of maximal branches in the cotree.
\end{defn}

\begin{defn}[$\kappa$-warp]
A \textbf{$\kappa$-warp} for a site $(\Ccal,J)$ where $\Ccal$ has limits of $<\!\kappa$-indexed cochains is a collection $\Bcal(C)$ of $J$-covering presieves over $C$ for each $C$ such that:
\begin{itemize}
    \item the pullback of a sieve generated by a presieve in $\Bcal$ contains a presieve in $\Bcal$, and
    \item every $J$-covering sieve $S'$ over $C'$ contains a presieve constructed as the multicomposite of a $\kappa$-cotree in which the predecessors of each node form a presieve in $\Bcal$, and conversely multicomposites of such $\kappa$-cotrees are $J$-covering.
\end{itemize}
\end{defn}

\begin{defn}[$\kappa$-woven sites] \label{defn:kwoven}
A site $(\Ccal,J)$ is \textbf{$\kappa$-woven} if $J$ has a $\kappa$-warp such that $\Bcal(C)$ contains at most $\kappa$ presieves for every $C$.
\end{defn}

The following is a strict improvement of Theorem \ref{thm:supermakkai}, reducing to it in the case $\kappa = \omega$. We take inspiration from the approach of Espíndola and Kanalas in \cite[Thms 2.4, 2.5]{espindola2023every}.

\begin{thm} \label{thm:superespindola}
Let $\kappa$ be a regular cardinal. Let $(\Ccal,J)$ be a $\kappa$-woven site where $\Ccal$ is a category with $\kappa$-small connected limits\footnote{Equivalently with pullbacks and limits of cochains of size $<\!\kappa$.}. Then assuming $\kappa$-dependent choice and the law of excluded middle, $\Sh(\Ccal,J)$ has enough $\kappa$-points (points preserving $\kappa$-small limits).
\end{thm}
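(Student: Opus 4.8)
The plan is to run the argument of Theorem \ref{thm:supermakkai} with $\omega$ replaced by the regular cardinal $\kappa$ and with $\Phi$ taken to be the class of limit diagrams of size $<\!\kappa$. By Lemma \ref{lem:manifestenoughpoints} it suffices to separate an arbitrary parallel pair $f,g : j_*(Y) \rightrightarrows j_*(X)$, so I fix a representable point $\yoco(C)$ together with $x \neq y \in \yoco(C)^*(j_*(X))$ witnessing the separation. The goal is to assemble a $\kappa$-filtered diagram of \emph{representable} points of $\PSh(\Ccal)$ whose colimit in $\pt(\PSh(\Ccal))$ is projective with respect to every $J$-covering sieve. The payoff of staying among representables is automatic: representables preserve all limits, and a $\kappa$-filtered colimit commutes with limits of size $<\!\kappa$, so the resulting point preserves $<\!\kappa$-limits and is therefore a $\kappa$-point. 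The improvements populating the diagram are produced by Proposition \ref{prop:someimprovement}; since the pullbacks it uses are computed in $\pt(\PSh(\Ccal))\op \simeq \Pro(\Ccal)$ via the extension of Proposition \ref{indindind} to $\kappa$-small connected limits, and $\Ccal$ has these limits by hypothesis, each such pullback of representables is again representable and the whole diagram stays inside the representables.

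Unlike the $\omega$-case, I would not build a linear chain. Because Proposition \ref{prop:someimprovement} selects a single leg of each covering family, I instead construct a \emph{cotree of prospective improvements}, branching over the legs of the covering presieves in the $\kappa$-warp $\Bcal$, and only extract an overall improvement after the fact. Fixing a bijection $\xi : \kappa \times \kappa \to \kappa$ with $\xi(\alpha,\beta) \geq \alpha$ to interleave the bookkeeping of objects reached and presieves of $\Bcal$ to be addressed, I define this cotree by transfinite recursion of length $\kappa$, appealing to $\kappa$-dependent choice to pick improvements at successor stages. The essential new ingredient occurs at limit ordinals $\lambda < \kappa$ along a branch: there I take the limit in $\Ccal$ of the $<\!\kappa$-cochain constructed so far, which exists by hypothesis and renders each branch continuous in the sense of Definition \ref{defn:kappacotree}. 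This continuity is exactly what will allow the later induction to pass through limit ordinals.

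The heart of the proof is then the projectivity verification, organised through Lemma \ref{lem:limitpoint}: I would show that for every prospective improvement $p'$ in the cotree and every $w \in {p'}^*(\yo(C'))$ there is a further improvement in the diagram, so that the colimit point is projective with respect to all $J$-covering sieves. Given such a $w : \yoco(C_\alpha) \to \yo(C')$ and a covering sieve $m' : S' \rightarrowtail \yo(C')$, the defining property of a $\kappa$-warp produces a $\kappa$-cotree of presieves in $\Bcal$ whose multicomposite is contained in $w^*(m')$, and I induct on the height of this $\kappa$-cotree. At successor heights I use that the $\xi$-indexing guaranteed an improvement against the first layer and then pull the remaining subcotree back layer by layer, replacing the first pulled-back sieve by one in $\Bcal$, exactly as in Theorem \ref{thm:supermakkai}; at limit heights I match the continuity of the $\kappa$-cotree with the continuity of the constructed branch and pass to the limit. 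Finally, regularity of $\kappa$ guarantees both the existence of $\xi$ and that the colimit is $\kappa$-filtered, so the point it computes is a $\kappa$-point separating $f$ and $g$.

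I expect the principal obstacle to be the coordination of the two continuity conditions at limit stages: the continuity clause on $\kappa$-cotrees and the continuity of the recursively built diagram of points must be aligned so that the transfinite induction on cotree height has no gap at limit ordinals, and simultaneously the cotree must remain $\kappa$-cofiltered enough that its colimit preserves $<\!\kappa$-limits while still containing every improvement demanded by Lemma \ref{lem:limitpoint}. It is in navigating this --- the stagewise selection of improvements and the case analysis resolving the transfinite induction --- that $\kappa$-dependent choice and the law of excluded middle are needed, the latter to decide at each node which leg carries the separation and to treat the limit ordinals classically, neither of which was necessary when all branches were finite in the $\omega$-case.
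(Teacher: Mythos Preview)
Your overall architecture matches the paper's: build a height-$\kappa$ cotree of representables in $\Ccal$ via the $\xi$-indexing, take limits in $\Ccal$ at limit heights so that each branch is continuous, check that every maximal branch yields a $\kappa$-point of $\Sh(\Ccal,J)$, and then select an improving branch after the fact. The projectivity verification you sketch is also essentially the paper's.

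The gap is in the final extraction step, and your description of the role of excluded middle exposes it. You write that LEM is used ``to decide at each node which leg carries the separation''. But this is exactly what \emph{cannot} be done, and Example \ref{exa:binaryfan} is designed to show why: the limit in $\Ccal$ of a cochain of improvements need not itself be an improvement, so a branch assembled by choosing an improving leg at each successor stage (via Proposition \ref{prop:someimprovement}) may cease to separate $x$ and $y$ at a limit height. Node-by-node selection does not survive limit ordinals, which is precisely why the paper abandons the linear induction of Theorem \ref{thm:improvement} here.

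What the paper does instead is build the \emph{entire} cotree blindly, with no selection whatsoever during the construction ($\kappa$-dependent choice is used only to pick, at each node, a $\Bcal$-presieve inside the relevant pullback, not to pick an improving leg). Every maximal branch is then a point of $\Sh(\Ccal,J)$, and the existence of one that still separates $x,y$ is argued by contradiction: if no branch separated them, then along every branch there would be a least height $\lambda<\kappa$ at which $x,y$ are equalized, and these nodes assemble into a $<\!\kappa$-cotree of $\Bcal$-presieves whose multicomposite equalizes $x$ and $y$. Here you need the \emph{converse} clause in the definition of $\kappa$-warp, which you never invoke: multicomposites of such $\kappa$-cotrees are $J$-covering. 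Since $j_*(X)$ is a sheaf, a $J$-covering sieve cannot equalize $x\neq y$, a contradiction. Excluded middle enters only at this single global step, to pass from ``not every branch fails'' to ``some branch succeeds''.
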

\begin{proof}
Let $C_0 \in \Ccal$. We construct a tree of height $\kappa$ in $\Ccal$ with root $C_0$ inductively as follows. Let $\xi: \kappa \times \kappa \to \kappa$ as usual. The predecessors of a node $C$ at height $\gamma^+$ (with $\xi^{-1}(\gamma) = (\alpha,\beta)$) form the $\Bcal(C)$-presieve\footnote{Should we care to complete this construction within the constraints of $\kappa$-dependent choice, we must assume that assignment from the pullback of a $\Bcal$-presieve to a $\Bcal$-presieve contained within it is functorial, hence ``\textit{the} $\Bcal(C)$-presieve''.} contained in $n_{\gamma,\alpha}^*(S_{\beta})$, where $n_{\gamma,\alpha}$ is the (possibly transfinite) composite of the morphisms in the diagram from $C$ to its successor $C'$ at height $\alpha$ and $S_{\beta}$ is the $\beta$th presieve in $\Bcal(C')$. The nodes at a limiting height $\lambda<\kappa$ are the limits in $\Ccal$ of the branches in the tree below them.

The colimit in $\pt(\PSh(\Ccal))$ of any branch of this cotree constitutes a $\kappa$-point of $\Sh(\Ccal,J)$. Indeed, any element of $p^*(\yo(C'))$ is represented by an element $w \in \yoco(C)^*(\yo(C'))$, and by assumption any $J$-covering sieve on $C'$ can be presented as the multicomposite of a ${<\!\kappa}$-cotree in $\Bcal$. We can verify projectivity by inducting up the ${<\!\kappa}$-cotree: supposing that we have shown that there is $\yoco(C_{\beta})$ in the branch which is projective with respect to the subcotree truncated to height $\beta$. This selects a branch in the truncated cotree:
% https://q.uiver.app/#q=WzAsMTIsWzAsMSwiQ197XFxiZXRhfSJdLFswLDMsIkMiXSxbMiwzLCJDJyJdLFsxLDIsIlxcYnVsbGV0Il0sWzMsMiwiXFxidWxsZXQiXSxbMiwyLCJcXGNkb3RzIl0sWzIsMSwiXFxidWxsZXQiXSxbNCwxLCJcXGJ1bGxldCJdLFszLDEsIlxcY2RvdHMiXSxbMSwwLCJcXGJ1bGxldCJdLFszLDAsIlxcYnVsbGV0Il0sWzIsMCwiXFxjZG90cyJdLFswLDFdLFszLDJdLFs0LDJdLFsxLDIsInciLDJdLFs2LDRdLFs3LDRdLFswLDZdLFs5LDZdLFsxMCw2XV0=
\[\begin{tikzcd}[ampersand replacement=\&]
	\& \bullet \& \cdots \& \bullet \\
	{C_{\beta}} \&\& \bullet \& \cdots \& \bullet \\
	\& \bullet \& \cdots \& \bullet \\
	C \&\& {C'}
	\arrow[from=2-1, to=4-1]
	\arrow[from=3-2, to=4-3]
	\arrow[from=3-4, to=4-3]
	\arrow["w"', from=4-1, to=4-3]
	\arrow[from=2-3, to=3-4]
	\arrow[from=2-5, to=3-4]
	\arrow[from=2-1, to=2-3]
	\arrow[from=1-2, to=2-3]
	\arrow[from=1-4, to=2-3]
\end{tikzcd}\]
We can pull back the $\Bcal$-presieve of predecessors of this node; by construction of the original diagram, there exists $C_{\beta^+}$ in the branch which completes a square with one of the members of this presieve. At a limit ordinal $\lambda$, the limit of the objects $\{C_{\beta}\}_{\beta<\lambda}$ must complete a square with the limit of the corresponding branch of the cotree on the right-hand side, by the universal property of the latter. With this, we are done by Lemma \ref{lem:limitpoint}.

Now given $x \neq y \in \yoco(C_0)(j_*(X))$, by the assumption that the multicomposites of $<\!\kappa$-cotrees are $J$-covering, we conclude that at least one of the branches in the diagram must consist of an improvement of $\yoco(C_0)$ for $(x,y,m,w)$ (for \textit{any} $J$-covering sieve $m$ and element $w$). Otherwise, in each branch of length $\kappa$ there is some $\lambda<\kappa$ such that the composite from height $\lambda$ down to $C_0$ equalizes $x,y$, and these assemble into a $<\!\kappa$-cotree that fails to distinguish $x$ from $y$, contradicting the hypothesis that the multicomposite of such a tree in $\Bcal(C_0)$ must be $J$-covering.
\end{proof}

\begin{exa}[An application of Theorem \ref{thm:superespindola} where Theorem \ref{thm:improvement} fails] \label{exa:binaryfan}
Consider a category $\Ccal$ constructed as follows:
\begin{itemize}
    \item Objects are of the form $C_w$ where $w$ is a finite or $\omega$-indexed binary word, plus a further object $D$;
    \item In addition to identity morphisms, there is a morphism $C_w \to C_{w'}$ whenever $w'$ is a finite prefix of $w$, so the $C_w$ form an $\omega^+$ cotree with branches of length $\omega^+$;
    \item There is a pair of morphisms $C_w \rightrightarrows D$ whenever $w$ is finite, or infinite and eventually constant at $1$, respected by the previous class of morphisms;
    \item There is a single morphism $C_w \to D$ for all other infinite $w$.
\end{itemize}
Part of the structure of this category is sketched here:
% https://q.uiver.app/#q=WzAsMTQsWzMsNCwiQ197XFxlcHNpbG9ufSJdLFsyLDMsIkNfezB9Il0sWzQsMywiQ197MX0iXSxbMSwyLCJDX3swMH0iXSxbMiwyLCJDX3swMX0iXSxbNCwyLCJDX3sxMH0iXSxbNSwyLCJDX3sxMX0iXSxbMyw1LCJEIl0sWzAsMCwiQ197MDAwXFxkb3RzY30iXSxbMSwxLCJcXHZkb3RzIl0sWzIsMSwiXFx2ZG90cyJdLFs0LDEsIlxcdmRvdHMiXSxbNSwxLCJcXHZkb3RzIl0sWzYsMCwiQ197MTExXFxkb3RzY30iXSxbMSwwXSxbMiwwXSxbMywxXSxbNCwxXSxbNSwyXSxbNiwyXSxbMCw3LCIiLDAseyJvZmZzZXQiOi0xfV0sWzAsNywiIiwyLHsib2Zmc2V0IjoxfV0sWzgsM10sWzEsN10sWzEsNywiIiwwLHsib2Zmc2V0IjoyfV0sWzgsNywiIiwwLHsiY3VydmUiOjV9XSxbMTMsNywiIiwwLHsiY3VydmUiOi0zfV0sWzEzLDcsIiIsMCx7Im9mZnNldCI6LTIsImN1cnZlIjotM31dLFsxMyw2XSxbMiw3LCIiLDAseyJvZmZzZXQiOi0xfV0sWzIsNywiIiwwLHsib2Zmc2V0IjoxfV1d
\[\begin{tikzcd}[row sep = small, ampersand replacement=\&]
	{C_{000\dotsc}} \&\&\&\&\&\& {C_{111\dotsc}} \\
	\& \vdots \& \vdots \&\& \vdots \& \vdots \\
	\& {C_{00}} \& {C_{01}} \&\& {C_{10}} \& {C_{11}} \\
	\&\& {C_{0}} \&\& {C_{1}} \\
	\&\&\& {C_{\epsilon}} \\
	\&\&\& D
	\arrow[from=4-3, to=5-4]
	\arrow[from=4-5, to=5-4]
	\arrow[from=3-2, to=4-3]
	\arrow[from=3-3, to=4-3]
	\arrow[from=3-5, to=4-5]
	\arrow[from=3-6, to=4-5]
	\arrow[shift left, from=5-4, to=6-4]
	\arrow[shift right, from=5-4, to=6-4]
	\arrow[from=1-1, to=3-2]
	\arrow[from=4-3, to=6-4]
	\arrow[shift right=2, from=4-3, to=6-4]
	\arrow[curve={height=30pt}, from=1-1, to=6-4]
	\arrow[curve={height=-18pt}, from=1-7, to=6-4]
	\arrow[shift left=2, curve={height=-18pt}, from=1-7, to=6-4]
	\arrow[from=1-7, to=3-6]
	\arrow[shift left, from=4-5, to=6-4]
	\arrow[shift right, from=4-5, to=6-4]
\end{tikzcd}\]
We can equip $\Ccal$ with the Grothendieck topology $J$ having as $\omega_1$-warp $\Bcal$ the pairs $C_{w0} \rightarrow C_w \leftarrow C_{w1}$. This makes the `minimal' sieve of morphisms into $C_{w}$ from objects indexed by infinite words $J$-covering. Moreover, the parallel pair of morphisms $C_{\epsilon} \rightrightarrows D$ remains distinct in $\Sh(\Ccal,J)$, since composition with the morphism from any `eventually 1' objects distinguishes them. If we attempt to construct a point of $\Sh(\Ccal,J)$ starting at a representable point $\yoco(C_w)$ of $\PSh(\Ccal)$ using linear induction as in the proof of Theorem \ref{thm:improvement}, we get stuck at the first limit ordinal, since there is no guarantee that the limit (taken in $\Ccal$) of a cochain of improvements will be an improvement, as witnessed by the leftmost branch starting from any $C_w$. Yet there does exist an improvement in the minimal sieve, namely the morphism coming from $C_{w111\dotsc}$; we know how to find it explicitly here by construction of $\Ccal$, but we relied on the law of excluded middle to construct it in the proof of Theorem \ref{thm:superespindola} so it may not be possible to construct it in general.
\end{exa}

\begin{para}[Returning to induction]
A stronger hypothesis on the category $\Ccal$ (independent of the choice of Grothendieck topology $J$) that allows us to recover the inductive construction is to ask that the canonical map
\begin{equation} \label{eq:cosmall}
\colim_{\alpha<\lambda} \Ccal(C_{\alpha},C) \to \Ccal(\lim_{\alpha<\lambda} C_{\alpha},C)
\end{equation}
is injective for all $C$ and cochains indexed by $\lambda<\kappa$. \Cref{exa:binaryfan} does not satisfy that hypothesis; we leave an elaboration of the proof of the corresponding weaker result to the dedicated reader. However, there is a further convenient consequence of imposing this property.
\end{para}

\begin{cor}[Presheaf collapse] \label{presheaftype}
In the assumptions of Theorem \ref{thm:superespindola}, if $\Ccal$ has limits of chains of length \textit{equal} to $\kappa$ satisfying \eqref{eq:cosmall} then $\Sh(\Ccal,J)$ is equivalent to a presheaf topos.
\end{cor}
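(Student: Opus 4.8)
The plan is to exhibit $\Sh(\Ccal,J)$ as a presheaf topos by producing a small, $J$-dense full subcategory of $\Ccal$ on which the induced Grothendieck topology is trivial, and then invoking the Comparison Lemma (see \cite[C2.2.3]{elephant2}). Concretely, I would let $\Dcal \subseteq \Ccal$ be the full subcategory spanned by the \emph{branch limits} $C_\infty := \lim_{\alpha<\kappa} C_\alpha$ of the height-$\kappa$ trees constructed in the proof of Theorem \ref{thm:superespindola}; these now exist in $\Ccal$ precisely because of the new hypothesis that $\Ccal$ has limits of chains of length $\kappa$, and since $\Ccal$ is essentially small so is $\Dcal$. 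If I can show (i) that every object of $\Ccal$ admits a $J$-covering family by objects of $\Dcal$, and (ii) that each $C_\infty \in \Dcal$ is \emph{$J$-irreducible} (its only $J$-covering sieve is the maximal one), then the Comparison Lemma gives $\Sh(\Ccal,J) \simeq \Sh(\Dcal, J|_{\Dcal})$, while (ii) forces $J|_{\Dcal}$ to be trivial, so that $\Sh(\Dcal,J|_{\Dcal}) = \PSh(\Dcal)$, as required.

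For the density statement (i), I would run the tree construction of Theorem \ref{thm:superespindola} to full height $\kappa$: starting from any $C \in \Ccal$, the predecessor presieves are drawn from the $\kappa$-warp $\Bcal$ exactly as there, and at every limiting height---now including $\kappa$ itself---the node is the limit in $\Ccal$ of the branch below it, using the limit clause of Definition \ref{defn:kappacotree}. The $\kappa$-warp axiom that multicomposites of $\kappa$-cotrees of $\Bcal$-presieves are $J$-covering, applied to this full-height tree, then says precisely that the family $\{C_\infty \to C\}$ indexed by the branches is $J$-covering. Hence $\Dcal$ is $J$-dense and the Comparison Lemma is applicable; this part is essentially a repackaging of the projectivity argument already carried out via Lemma \ref{lem:limitpoint} in Theorem \ref{thm:superespindola}.

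The substance of the proof is step (ii). By Corollary \ref{cor:coneinj}, $J$-irreducibility of $C_\infty$ is equivalent to the representable $\yoco(C_\infty)$ being a point of $\Sh(\Ccal,J)$, i.e.\ to its sending $J$-covering sieves to jointly epimorphic families. I would compare $\yoco(C_\infty)$ with the colimit point $p := \colim_{\alpha<\kappa}\yoco(C_\alpha)$: the genericity built into the chain (the bijection $\xi$ ensures that every pair consisting of a $\Bcal$-presieve and an element is eventually addressed) makes $p$ a $\kappa$-point of $\Sh(\Ccal,J)$ via Lemma \ref{lem:limitpoint}, and the limit cone $C_\infty \to C_\alpha$ induces a comparison $p \to \yoco(C_\infty)$ whose component at each $\yo(D)$ is exactly the map \eqref{eq:cosmall}. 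Injectivity of that map at $\lambda=\kappa$ says precisely that this comparison is a monomorphism, so that $C_\infty$ \emph{retains} all the separating data of its stages---in particular it still distinguishes the fixed parallel pair. This is exactly the property that fails in Example \ref{exa:binaryfan} along the branches that are not eventually constant, where two stage-morphisms collapse at the limit and the limit point ceases to separate.

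The main obstacle is to upgrade ``$\yoco(C_\infty)$ separates'' to ``$\yoco(C_\infty)$ is $J$-flat'', equivalently to show that no \emph{new} covering data appears at height $\kappa$. Given a morphism $C_\infty \to D$ and a $J$-cover of $D$, one must produce a lift through one of its legs: I would pull the cover back along $C_\infty \to D$, use the $\kappa$-woven hypothesis to present it as the multicomposite of a $<\!\kappa$-cotree of $\Bcal$-presieves, and induct up this cotree as in the projectivity argument of Theorem \ref{thm:superespindola}, invoking the limit clause of Definition \ref{defn:kappacotree} at limit stages and regularity of $\kappa$ to stay within the tree. The delicate point---and the reason the strengthened hypothesis \eqref{eq:cosmall} at $\lambda=\kappa$ is needed rather than only its $<\!\kappa$ version---is that the morphism $C_\infty \to D$ and the covers pulled back along it live at the limit level; injectivity of \eqref{eq:cosmall} is what ties these back to the generically resolved stages $C_\alpha$ and rules out the degeneracy of Example \ref{exa:binaryfan}. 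Once $J$-irreducibility of the branch limits is secured, step (i) supplies $J$-density of $\Dcal$ and the Comparison Lemma closes the argument, giving $\Sh(\Ccal,J) \simeq \Sh(\Dcal, J|_{\Dcal}) = \PSh(\Dcal)$.
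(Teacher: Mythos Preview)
Your overall strategy coincides with the paper's: both identify the branch limits $C_\infty$ as the objects whose representable presheaves are $J$-sheaves (equivalently, whose corepresentable points land in $\Sh(\Ccal,J)$), and conclude that $\Sh(\Ccal,J)$ is presheaves on these. Your Comparison Lemma formulation is equivalent to the paper's ``tiny generating family'' formulation, and you correctly isolate step (ii) as the substance of the argument, with the monomorphic comparison coming from \eqref{eq:cosmall} as the link between $\yoco(C_\infty)$ and the point $p = \colim_{\alpha<\kappa}\yoco(C_\alpha)$ already produced in Theorem \ref{thm:superespindola}.

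There is, however, a genuine gap in your density argument (i). The $\kappa$-warp axiom only asserts that multicomposites of $\kappa$-cotrees are $J$-covering, and by Definition \ref{defn:kappacotree} a $\kappa$-cotree has intervals isomorphic to duals of ordinals strictly less than $\kappa$; the tree you run to full height $\kappa$ is not a $\kappa$-cotree (it is a $\kappa^+$-cotree), so the axiom does not apply and you cannot conclude that $\{C_\infty \to C_0\}$ is $J$-covering in this way. The paper avoids this issue by not arguing density directly. Instead, once (ii) is granted, it uses the embedding $p^* \hookrightarrow \yoco(C_\infty)^*$ furnished by injectivity of \eqref{eq:cosmall} to transfer the \emph{separation} property: if $p$ distinguishes a parallel pair then naturality plus injectivity force $\yoco(C_\infty)$ to distinguish it as well. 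Since the points $p$ are already jointly conservative by Theorem \ref{thm:superespindola}, so are the representable points $\yoco(C_\infty)$; this is precisely what makes the corresponding tiny objects a generating family for $\Sh(\Ccal,J)$. Replacing your density argument with this transfer-of-separation step brings your proof in line with the paper's.
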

\begin{proof}
Under these stronger hypotheses, the point constructed in the proof of Theorem \ref{thm:superespindola} embeds into the representable point obtained by instead taking the limit in $\Ccal$, so we have a representable point which restricts to $\Sh(\Ccal,J)$. This amounts to saying that the corresponding representable presheaves are $J$-sheaves, and hence tiny objects in $\Sh(\Ccal,J)$. That there are \textit{enough} points of this form precisely means that these tiny objects form a generating family for $\Sh(\Ccal,J)$, and thus the topos is of presheaf type. In fact, we can deduce that $\Sh(\Ccal,J) \simeq \PSh(\Ccal')$, where $\Ccal'$ is the full subcategory of $\Ccal$ on objects $C$ such that $\yo(C)$ is a $J$-sheaf.
\end{proof}

\begin{cor}[Espíndola and Kanalas, {\cite[Thm. 5.4]{espindola2023every}}]
\label{cor:espindola}
Let $\kappa$ be a regular cardinal. Let $(\Ccal,J)$ be a site where $\Ccal(C,C')$ has cardinality $\leq\kappa$ for all $C,C'$, $\Ccal$ has $\kappa$-small limits and $J$ is generated under pullbacks and multicomposites of ${<\!\kappa}$-cotrees by at most $\kappa$ sieves. Then $\Sh(\Ccal,J)$ has enough $\kappa$-points.
\end{cor}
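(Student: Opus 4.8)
The plan is to exhibit the given site as a $\kappa$-woven site in the sense of Definition \ref{defn:kwoven} and then invoke Theorem \ref{thm:superespindola} directly, in the same spirit as the proof of Corollary \ref{cor:makkai} reduced the separable case to Theorem \ref{thm:supermakkai}. Concretely, I would fix a set $G$ of at most $\kappa$ sieves witnessing the hypothesis that $J$ is generated under pullbacks and multicomposites of $<\!\kappa$-cotrees, and define the candidate $\kappa$-warp by letting $\Bcal(C)$ be the collection of all pullbacks along morphisms $w: C \to C'$ of those sieves in $G$ whose codomain is $C'$. Each such pullback is $J$-covering because $J$ is stable under pullback and the members of $G$ are $J$-covering, so $\Bcal(C)$ is indeed a collection of $J$-covering presieves over $C$.

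The first thing to verify is the cardinality bound required by Definition \ref{defn:kwoven}. For a fixed sieve $S \in G$ with codomain $C'$, the pullbacks of $S$ to $C$ are indexed by the hom-set $\Ccal(C,C')$, which has cardinality $\leq \kappa$ by hypothesis; ranging over the $\leq \kappa$ members of $G$ and using that $\kappa$ is regular (so that $\kappa \cdot \kappa = \kappa$), I obtain $|\Bcal(C)| \leq \kappa$. Note that the total number of objects of $\Ccal$ is irrelevant here, since $\Bcal(C)$ only sees morphisms out of the single object $C$.

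Next I would check the two warp axioms. The first — that the pullback of a sieve generated by a $\Bcal$-presieve contains a $\Bcal$-presieve — is essentially immediate, since the pullback of a pullback of a member of $G$ is again a pullback of a member of $G$, hence already lies in $\Bcal$. The second axiom carries the real content: every $J$-covering sieve must contain the multicomposite of a $<\!\kappa$-cotree whose nodes have predecessors in $\Bcal$, and conversely such multicomposites must be $J$-covering. The converse direction follows from $J$ being a Grothendieck topology, multicomposition being an iterated combination of pullback-stability and the transitivity (local character) axiom, with the limit-of-a-branch condition on a $<\!\kappa$-cotree handled by the assumption that $\Ccal$ has limits of $<\!\kappa$-cochains. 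The forward direction is precisely the unwinding of the phrase ``generated under pullbacks and multicomposites of $<\!\kappa$-cotrees'': every covering sieve is reachable from $G$ by iterated pullbacks and multicomposites, and the technical lemma I expect to need is that an iterated multicomposite can be flattened into a single $<\!\kappa$-cotree multicomposite by grafting subcotrees onto leaves, invoking regularity of $\kappa$ to keep all branch lengths strictly below $\kappa$. Finally, the limit hypothesis of Theorem \ref{thm:superespindola} (pullbacks together with limits of $<\!\kappa$-cochains) is subsumed by the assumption that $\Ccal$ has all limits of diagrams of size $<\!\kappa$, so with every hypothesis verified the theorem delivers enough $\kappa$-points.

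I expect the main obstacle to be exactly the forward direction of the second warp axiom: reconciling the closure operation ``generated under pullbacks and multicomposites'' with the \emph{single}-cotree formulation demanded by the definition of a $\kappa$-warp. Morally this is a bookkeeping argument about composing cotrees, but the transfinite heights and the continuity (limit) condition imposed on branches in a $\kappa$-cotree require genuine care to guarantee that the flattened, grafted cotree still has every branch of length $<\!\kappa$ and remains a legitimate diagram in $\Ccal$.
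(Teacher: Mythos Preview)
Your proposal is correct and follows exactly the approach the paper takes: form the $\kappa$-warp by pulling back the generating sieves and invoke Theorem~\ref{thm:superespindola}, in direct analogy with Corollary~\ref{cor:makkai}. The paper's proof is in fact just a two-sentence sketch of precisely this argument, so your elaboration of the cardinality count and your identification of the cotree-flattening step as the one place needing genuine care simply make explicit what the paper leaves to the reader.
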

\begin{proof}
Just as in Corollary \ref{cor:makkai}, the given data is enough to deduce that we have a $\kappa$-warp of $J$ obtained by taking pullbacks of the generating sieves. Thus we conclude the result by applying Theorem \ref{thm:superespindola}.
%We can again use the bijection $\xi: \kappa \times \kappa \to \kappa$ with $\xi(\alpha,\beta) \geq \alpha$ for all $\beta$. For each object $C$, construct a $\kappa$-warp for $J$ at $C$ by, for each $\gamma$ with $\xi^{-1}(\gamma) = (\alpha,\beta)$, taking the $\gamma$th covering sieve to be the pullback of the $\beta$th generating sieve (over $C_{\beta}$, say) along the $\alpha$th morphism $C \to C_{\beta}$. This produces the $\kappa$-warp at $C$.
%To check the condition on $J$-covering sieves, suppose we are given a tree in which every branch has length $<\!\kappa$ in which the successors of each node form a $J$-covering sieve. 
\end{proof}

\begin{rem}[Connection to forcing]
As noted by Espíndola and Kanalas in \cite{espindola2023every} and independently pointed out to us by Dianthe Basak, the hypotheses of Theorem \ref{thm:supermakkai} and its proof resemble those of the Rasiowa–Sikorski Lemma. There is a similar resemblance between Theorem \ref{thm:superespindola} and Martin's Axiom, although the hypotheses ensure that we need only appeal to choice in the construction and no such stronger (or logically independent) axiom need be invoked. This suggests a relationship between constructibility of points and extensions of ZFC that we do not have space to explore in depth here.
\end{rem}

\begin{para}[Large cardinals and posetal collapse]
In \cite{espindola2020infinitary}, Espíndola invokes the condition $\kappa^{<\kappa} = \kappa$ on cardinals, which is shorthand for the existence of a bijection $\kappa^{\lambda} \cong \kappa$ for any $\lambda < \kappa$. This condition is implied by inaccessibility. While extensive use is made of this condition, its negation actually has strong consequences if we impose constraints on the size of hom-sets in $\Ccal$. This illustrates how removing size constraints in Theorem \ref{thm:superespindola} may represent a significant improvement of the result of Espíndola and Kanalas, although compelling concrete examples are harder to come by.
\end{para}

\begin{prop}[Posetal collapse for cardinals $\kappa < \kappa^{<\!\kappa}$] \label{prop:posetalcollapse}
Suppose $(\Ccal,J)$ is a site satisfying the conditions of Corollary \ref{cor:espindola} but that there exists $\lambda < \kappa$ such that $\kappa^\lambda > \kappa$. Then $\Ccal$ is a poset.
\end{prop}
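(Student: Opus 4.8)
The plan is to argue by contradiction, using the available $<\!\kappa$-limits to manufacture a hom-set of cardinality exceeding $\kappa$, which contradicts the size bound imposed in Corollary~\ref{cor:espindola}. So suppose $\Ccal$ is not a poset, i.e.\ there are two distinct parallel morphisms $f \neq g : C \to C'$. For each infinite cardinal $\lambda < \kappa$, the $\lambda$-fold power $C'^{\lambda} = \prod_{i < \lambda} C'$ exists, since it is a limit of a (discrete) diagram of size $\lambda < \kappa$ and $\Ccal$ has all such limits by hypothesis. By the universal property of the product we have $\Ccal(C, C'^{\lambda}) \cong \Ccal(C,C')^{\lambda}$, and since $\Ccal(C,C')$ contains the two distinct elements $f,g$ this set has cardinality at least $2^{\lambda}$. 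The standing bound $|\Ccal(C, C'^{\lambda})| \le \kappa$ therefore forces $2^{\lambda} \le \kappa$; as this holds for every $\lambda < \kappa$ (finite $\lambda$ being trivial), I obtain $2^{\lambda} \le \kappa$ for all $\lambda < \kappa$.

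The crux is then a purely cardinal-arithmetic observation, and this is where regularity of $\kappa$ (inherited from Corollary~\ref{cor:espindola}) is essential: if $2^{\lambda} \le \kappa$ for all $\lambda < \kappa$ and $\kappa$ is regular, then $\kappa^{\lambda} = \kappa$ for all $\lambda < \kappa$. To see this, fix $\lambda < \kappa$; since $\lambda < \kappa = \mathrm{cf}(\kappa)$, every function $\lambda \to \kappa$ has image bounded below $\kappa$, so $\kappa^{\lambda} \le \sum_{\alpha < \kappa} |\alpha|^{\lambda}$. For each $\mu = |\alpha| < \kappa$ we estimate $\mu^{\lambda} \le (2^{\mu})^{\lambda} = 2^{\max(\mu,\lambda)} \le \kappa$, using the assumption on $2^{(-)}$ together with $\max(\mu,\lambda) < \kappa$. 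Hence $\kappa^{\lambda} \le \kappa \cdot \kappa = \kappa$, the reverse inequality being trivial.

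This directly contradicts the hypothesis of the proposition, namely that $\kappa^{\lambda} > \kappa$ for some $\lambda < \kappa$ (equivalently $\kappa < \kappa^{<\!\kappa}$). I therefore conclude that no two distinct parallel morphisms can exist, i.e.\ $\Ccal$ is a poset. I expect the main obstacle to be precisely the cardinal-arithmetic lemma of the second paragraph: the product construction and the reduction to ``$2^\lambda \le \kappa$ for all $\lambda<\kappa$'' are routine, but that lemma is the one point where regularity cannot be dispensed with — for singular $\kappa$ one could have $\kappa^{\lambda} > \kappa$ driven by $\mathrm{cf}(\kappa) \le \lambda$ rather than by any blow-up of $2^{\lambda}$, and the argument would break down.
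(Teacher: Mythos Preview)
Your proof is correct and follows essentially the same Freyd-style product argument as the paper: both exploit the $<\!\kappa$-products in $\Ccal$ together with the hom-set bound to derive a contradiction from a hypothetical pair of distinct parallel arrows. The only cosmetic difference is that the paper does a two-stage product (first $\prod_{\lambda'} B$ to reach $\geq\kappa$ morphisms, then $\prod_{\lambda}$ of that to exceed $\kappa$), whereas you take a single product and make the underlying cardinal-arithmetic lemma (that regularity plus $2^{\lambda}\le\kappa$ for all $\lambda<\kappa$ forces $\kappa^{<\kappa}=\kappa$) explicit; the paper effectively hides that same lemma inside the phrase ``since $\kappa$ is not inaccessible''.
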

\begin{proof}
This is a version of the classic argument of Freyd\footnote{Citation is not endorsement.}, \cite[Chapter 3.D]{freyd1966abelian}. First, since $\kappa$ is not inaccessible, there exists $\lambda'$ with $2^{\lambda'} \geq \kappa$. Given a parallel pair of arrows $f,g:A \rightrightarrows B$ in $\Ccal$, if $f \neq g$ then there must be $2^{\lambda'} \geq \kappa$ morphisms from $A$ to $B' := \prod_{\lambda'} B$ in $\Ccal$. Now taking $\lambda$ as in the statement, we conclude that there are at least $\kappa^\lambda > \kappa$ morphisms from $A$ to $\prod_{\lambda} B'$, a contradiction. Thus we must have had $f=g$, as required.
\end{proof}

%We devote the next section to posetal sites.
%We will explore posetal sites in Section \ref{ssec:locales}.

\subsection{Spatial locales} \label{ssec:locales}

In the previous sections, we have discussed a number of assumptions on a site $(\Ccal, J)$ ensuring that the topos of sheaves on this site will have enough points. Some of these results have a logical interpretation: we know that coherence is linked to first order logic, and that \textit{separability} is somehow connected to Fourman-Grayson's completeness theorem. \textit{What about their geometric meaning?} This subsection aims to shed some light on this topic, both to provide a more geometric intuition on this theory, and to bring some clarity where the literature seems to have created some confusion. Let us start with this last point.

In this subsection, when we mention a notion from general topology, we shall refer to \cite{munkrestopology,sierpinski2020general, steen1978counterexamples}.

\begin{defn}[Gradations of separability {\cite[Chap. III]{sierpinski2020general}}]
According to the classics, a topological space is:
\begin{enumerate}
    \item separable if it has a countable dense subset,
    \item first countable,  if each point has a countable neighbourhood basis,
    \item second countable, or completely separable, if it has a countable case.
\end{enumerate}

It is straightforward to prove that $3 \Rightarrow 1 \wedge 2$.
\end{defn}

% \begin{prop}
%     Let $(X,d)$ be a metric space. Then $\text{Sh}(X)$ is separable in the sense of Makkai and Reyes.
% \end{prop}
% \begin{proof}
% {\color{magenta} Let's focus on the case of the real line, the proof is the same in general. For every rational $q \in \mathbb{Q}$, $B_{1/n}(q) = \{x \in \mathbb{R} : d(x,q) < 1/n\}$. This defines a countable (in the sense of Makkai) which a dense subcategory of $\mathcal{O}(X)$ and on which the topology generated by the comparison lemma provides a structure of separable site. Thus $\text{Sh}(X)$ is separable.} \mr{Falso! Consider the cover $\{B_1(0)\} \cup \{B_{1/q}(q) : |q| \geq 1 \}$. None of your covers refine this one!}
% \end{proof}

\begin{prop}[A site description of second countability]
Let $X$ be a topological space. The following are equivalent:
\begin{itemize}
    \item $X$ is second countable.
    \item $\Sh(X)$ has a countable (posetal) site with finite limits.
    \item $\Sh(X)$ has a site with countably many objects.
\end{itemize}
\end{prop}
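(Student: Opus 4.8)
The plan is to prove the three conditions equivalent via the cycle (second countable) $\Rightarrow$ (countable posetal site with finite limits) $\Rightarrow$ (site with countably many objects) $\Rightarrow$ (second countable). The guiding principle is that for a topological space $X$, the locale $\Sh(X)$ is presented by the frame $\Ocal(X)$ of open sets, viewed as a posetal site where covering sieves are the open covers; second countability is precisely the statement that this frame has a countable join-dense subset (a countable base $\Bcal$).

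For (second countable) $\Rightarrow$ (countable posetal site with finite limits), I would take a countable base $\Bcal$ closed under finite intersections; since $X$ itself and $\emptyset$ can be adjoined and finite intersections of basic opens remain expressible as finite unions of basic opens only after closure, the cleanest move is to take the meet-closure of a countable base, which remains countable (finite subsets of a countable set form a countable set). This gives a countable sub-poset $\Ccal \subseteq \Ocal(X)$ closed under finite meets, hence a countable posetal category with finite limits (finite meets give finite products/pullbacks, and the top element is terminal). Equipping $\Ccal$ with the topology $J$ induced from the open-cover topology on $\Ocal(X)$, the comparison $\Sh(\Ccal,J) \simeq \Sh(\Ocal(X)) = \Sh(X)$ follows from the Comparison Lemma, since a meet-dense subframe is a dense sub-site. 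The implication (countable posetal site with finite limits) $\Rightarrow$ (site with countably many objects) is immediate, as a countable poset has countably many objects.

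The substantive implication is (site with countably many objects) $\Rightarrow$ (second countable). Here I would argue that if $\Sh(X) \simeq \Sh(\Ccal,J)$ with $\ob(\Ccal)$ countable, then the subobjects of the terminal object of $\Sh(X)$ — which, for a localic topos, are exactly the opens of $X$ — are generated under joins by the images of the representables $\yo(C)$ sheafified, i.e.\ by a countable family. More precisely, the representable sheaves $\{a(\yo(C)) \mid C \in \Ccal\}$ form a generating set, and pulling back to subterminal objects yields a countable collection of opens that is join-dense in $\Ocal(X)$; any open is a union of such basic opens because the representables generate. This produces a countable base, giving second countability. The one subtlety is passing from an arbitrary (possibly non-posetal) countable site back to opens of $X$: I would use that $\Sh(X)$ is localic, so its subterminal objects form the frame $\Ocal(X)$, and that the countably many representables generate the topos and hence their supports generate $\Ocal(X)$ under joins.

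The main obstacle I anticipate is the last implication, specifically controlling what happens when $\Ccal$ is \emph{not} posetal: a countable non-posetal site could in principle present $\Sh(X)$ while the relationship between objects of $\Ccal$ and opens of $X$ is mediated by sheafification in a way that is not obviously ``countably join-dense.'' The resolution is to observe that the supports (images in the terminal object) of a generating family always join-generate the subterminal frame, and a countable generating family therefore yields a countable join-dense subset of $\Ocal(X)$, which is exactly a countable base. I would make sure to invoke that localic reflection identifies subterminals with opens, so that no information is lost in translating between the topos-theoretic generating family and a topological base.
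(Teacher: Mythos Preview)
Your proposal is correct and follows essentially the same cycle of implications as the paper: a countable base closed under finite meets gives the dense lex sub-site for $(1)\Rightarrow(2)$, the implication $(2)\Rightarrow(3)$ is trivial, and for $(3)\Rightarrow(1)$ both you and the paper pass to the supports of the (sheafified) representables. The only minor difference is that the paper phrases the last step as obtaining a countable \emph{prebasis} and then closing under finite intersections, whereas you argue directly that the supports are join-dense in $\Ocal(X)$; your version is in fact slightly sharper, since an epimorphism $\coprod a(\yo(C_j)) \twoheadrightarrow U$ already exhibits $U$ as the union of the corresponding supports.
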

\begin{proof}
If $X$ is second-countable, the \textit{basis} $B$ -- in the classical sense of general topology -- is exactly a lex subposet $B \hookrightarrow \mathcal{O}(X)$ that is dense.

$2\Rightarrow 3$ is immediate. Finally, if $\Sh(X)$ has a countable site, then the supports of representable sheaves form a generating family for $\Sh(X)$, which is a \textit{prebasis} for $X$ (a subposet of $\mathcal{O}(X)$ whose closure under finite intersections is a basis). Since the closure of such a prebasis under finite intersections remains countable, $X$ is second countable.
\end{proof}

\begin{para}[Comparing naming conventions] \label{para:separable?}
Makkai and Reyes justified naming their class of topoi `separable' because of a very special case, namely that the topos of sheaves over a Hausdorff space $X$ is separable if and only if $X$ is a complete separable metric space \cite[Proposition 6.2.3]{makkai2006first}; we find this justification wanting. The classical notion of \textit{separable} or \textit{completely separable} space from general topology are much more general than the (very) special case they are inspired by and this choice creates confusion on how such a geometric object may behave (especially given the hypothesis of Hausdorffness).

In the diagram below we collected all the notions that appeared in this paper or that seem relevant to the discussion.
% https://q.uiver.app/#q=WzAsNyxbMSwxLCJcXHRleHR7c2Vjb25kIGNvdW50YWJsZX0iXSxbMiwxLCJcXHRleHR7Zmlyc3QgY291bnRhYmxlfSJdLFsyLDAsIlxcdGV4dHtzZXBhcmFibGV9Il0sWzEsMiwiXFx0ZXh0e01ha2thaS1SZXllcyBzZXBhcmFibGV9Il0sWzMsMSwiXFxvbWVnYXtcXHRleHR7LXdvdmVufX0iXSxbMCwyLCJcXHRleHR7Y29tcGFjdH0iXSxbMCwwLCJcXG9tZWdhe1xcdGV4dHstY29tcGFjdH19Il0sWzMsMCwiIiwwLHsibGV2ZWwiOjIsInN0eWxlIjp7ImJvZHkiOnsibmFtZSI6ImRhc2hlZCJ9fX1dLFswLDEsIiIsMCx7ImxldmVsIjoyfV0sWzMsNCwiIiwyLHsiY3VydmUiOjIsImxldmVsIjoyLCJzdHlsZSI6eyJib2R5Ijp7Im5hbWUiOiJkYXNoZWQifX19XSxbMCw2LCIiLDAseyJsZXZlbCI6Mn1dLFs1LDYsIiIsMix7ImxldmVsIjoyfV0sWzAsMiwiIiwwLHsibGV2ZWwiOjJ9XV0=
\[\begin{tikzcd}[ampersand replacement=\&]
	{\omega{\text{-compact}}} \&\& {\text{separable}} \\
	\& {\text{second countable}} \& {\text{first countable}} \& {\omega{\text{-woven}}} \\
	{\text{compact}} \& {\text{Makkai-Reyes separable}}
	\arrow[Rightarrow, dashed, from=3-2, to=2-2]
	\arrow[Rightarrow, from=2-2, to=2-3]
	\arrow[curve={height=12pt}, Rightarrow, dashed, from=3-2, to=2-4]
	\arrow[Rightarrow, from=2-2, to=1-1]
	\arrow[Rightarrow, from=3-1, to=1-1]
	\arrow[Rightarrow, from=2-2, to=1-3]
\end{tikzcd}\]

We chose to represent them in different columns because they ostensibly axiomatize different features of the space. Let us comment on them from a site-theoretic perspective. 

\begin{itemize}
    \item compactness and $\omega$-compactness (often called Lindelöf) tell us that covering families have subfamilies of a certain size.
    \item second countability is a bound on the size of the site itself.
    \item woven-ness puts a bound on the complexity of the class of covering families. Notice that this is \textit{a priori} very different from any form of compactness, even though it may seem to have a similar flavour.
    \item Makkai-Reyes separability is a mix of woven-ness and second countability.
    \item separability and first countability are strange notions from a locale-theoretic perspective, because they refer to points.
\end{itemize}

Of course, it happens that these columns do get mixed. For example, second countability is such a strong requirement that it implies $\omega$-compactness, precisely because it cuts down the number of opens quite drastically, and thus forces some compactness-like behaviour into the space. 
\end{para}

We finish this subsection by recalling a result due to Heckmann which is a further consequence of \ref{thm:supermakkai}. Heckmann's proof provides a new insight into the topic via a mild generalization of the Baire category theorem to prove his main theorem \cite[Corollary 3.15]{Heckman}.

\begin{cor}[Heckmann]
A locale $L$ whose frame of opens $\Ocal(L)$ can be presented with countably many relations (inequalities between upward-closed subsets of $\Pcal_{\mathrm{fin}}(G)$) is spatial.
\end{cor}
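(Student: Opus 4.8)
The plan is to derive this corollary from \Cref{thm:supermakkai} by exhibiting an $\omega$-woven site that presents the given locale $L$. The frame $\Ocal(L)$ is presented by generators $G$ together with countably many relations; since a frame is a lex poset with arbitrary joins, the natural ambient site is the category $\Ccal = \Pcal_{\mathrm{fin}}(G)$ of finite subsets of generators (ordered by inclusion, so that finite meets are computed as unions of generator-sets), equipped with the Grothendieck topology $J$ whose covering relations encode the imposed inequalities. First I would make precise how a relation between upward-closed subsets of $\Pcal_{\mathrm{fin}}(G)$ translates into a covering presieve: an inequality asserting that one finite meet of generators is below a join of others becomes the declaration that a certain family of inclusions is $J$-covering. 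The poset $\Pcal_{\mathrm{fin}}(G)$ has all finite meets (unions of finite sets), so in particular it has pullbacks, satisfying the hypothesis of \Cref{thm:supermakkai}.

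The key step is to produce an $\omega$-warp $\Bcal$ for $J$ with $\Bcal(C)$ countable at each object $C \in \Pcal_{\mathrm{fin}}(G)$. The countably many defining relations give a countable set of generating covering presieves; following the recipe of \Cref{cor:makkai}, I would take $\Bcal(C)$ to consist of all pullbacks of these generating presieves along morphisms into $C$. Since $\Pcal_{\mathrm{fin}}(G)$ is a poset, pullback along an inclusion $C \to C'$ is just intersection, so the set of pullbacks of a fixed generating presieve is a quotient of the generating data and the countable family of relations yields a countable $\Bcal(C)$ at each object. The first warp axiom (pullbacks of $\Bcal$-presieves contain $\Bcal$-presieves) holds by construction, and the second (every $J$-covering sieve contains a multicomposite of a cotree of $\Bcal$-presieves) holds because $J$ is by definition generated from these relations under pullback and transitive composition, exactly the closure operations that cotree multicomposites implement.

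With the $\omega$-woven site in hand, \Cref{thm:supermakkai} yields that $\Sh(\Ccal,J)$ has enough points, assuming dependent countable choice. It then remains to identify $\Sh(\Ccal,J)$ with $\Sh(L)$: since $(\Pcal_{\mathrm{fin}}(G), J)$ is a posetal site presenting the frame $\Ocal(L)$ (its poset of $J$-closed sieves recovers $\Ocal(L)$), the sheaf topos is the localic topos of $L$. Finally I would invoke the standard fact that a locale is spatial precisely when its topos of sheaves has enough points (each point of $\Sh(L)$ corresponds to a point of the locale, and faithfulness of enough points is exactly spatiality for the posetal case). Concluding that $L$ is spatial.

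The main obstacle I anticipate is the careful bookkeeping in the translation step: verifying that the relations ``between upward-closed subsets of $\Pcal_{\mathrm{fin}}(G)$'' correspond exactly to $J$-covering presieves, and that the topology $J$ generated by these relations genuinely satisfies the second warp axiom, i.e.\ that every $J$-cover decomposes as a multicomposite of a cotree built from the countably many generating covers. This is essentially a normal-form or confluence argument for the closure of the relations under pullback and composition, and getting the cotree structure to match the inductive generation of $J$ is where the real work lies; the size estimates and the appeal to \Cref{thm:supermakkai} are then routine.
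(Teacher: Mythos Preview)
Your proposal is correct and follows essentially the same route as the paper: take $\Ccal = \Pcal_{\mathrm{fin}}(G)$ as a posetal site with pullbacks, observe that the countably many relations generate $J$, and note that because the site is a poset, pulling back these generating covers to any object yields only countably many sieves there, giving the required $\omega$-warp so that \Cref{thm:supermakkai} applies. The obstacle you anticipate regarding the second warp axiom is less serious than you suggest: since $J$ is by definition the topology \emph{generated} by the given relations, every $J$-covering sieve already contains a multicomposite of a cotree of pullbacks of generators, exactly as in the proof of \Cref{cor:makkai}; no separate confluence argument is needed.
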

\begin{proof}
Being presented by countably many relations means that there is an inclusion $j:\Sh(L) \to \PSh(\Pcal_{\mathrm{fin}}(G))$. The relations corresponds to inclusions of subterminal subobjects $\PSh(\Pcal_{\mathrm{fin}}(G))$ which generate the Grothendieck topology on $\Pcal_{\mathrm{fin}}(G)$ corresponding to $j$, and by hypothesis this generating family is countable. For each finite subset $U \in \Pcal_{\mathrm{fin}}(G)$, we obtain a countable warp of covers of $U$ by pulling back these covers; this produces only countably many sieves over $U$ because the subterminal objects form poset. Thus we may apply Theorem \ref{thm:supermakkai}.
\end{proof}

\section{Extending the class of topoi with enough points}
\label{sec:extend}

\subsection{Eliminating pullbacks} 
\label{ssec:pb}

Depending on one's disposition, the requirement that the categories featuring in the sites of the last section should have pullbacks may seem rather demanding. We can eliminate this requirement from Theorem \ref{thm:superespindola} (and hence from Theorem \ref{thm:supermakkai}) by extending a site without pullbacks to an equivalent one having them.

\begin{thm} \label{thm:pbfree}
Let $\kappa$ be a regular cardinal. Let $(\Ccal,J)$ be a $\kappa$-woven site where $\Ccal$ has limits of cochains of size $<\!\kappa$ (but need \emph{not} have pullbacks). Then assuming $\kappa$-dependent choice and the law of excluded middle, $\Sh(\Ccal,J)$ has enough $\kappa$-points.
\end{thm}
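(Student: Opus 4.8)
The goal is to remove the pullback hypothesis from Theorem~\ref{thm:superespindola}. The strategy I would take is exactly the one announced in the section heading: rather than re-proving the theorem from scratch, I would \emph{formally adjoin} the missing pullbacks to $\Ccal$ to obtain an auxiliary category $\widehat{\Ccal}$ which does have pullbacks (and still has the $<\!\kappa$-cochain limits we need), transport the topology $J$ along the embedding $\Ccal \hookrightarrow \widehat{\Ccal}$ to a topology $\widehat{J}$, and then invoke Theorem~\ref{thm:superespindola} on the enlarged site. The whole enterprise rests on producing an equivalence $\Sh(\Ccal,J) \simeq \Sh(\widehat{\Ccal},\widehat{J})$ so that ``enough $\kappa$-points'' transfers back, together with the verification that $(\widehat{\Ccal},\widehat{J})$ is again $\kappa$-woven.

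\textbf{Step 1: build the pullback-completion.}
I would take $\widehat{\Ccal}$ to be a closure of (the image of) $\Ccal$ inside $\PSh(\Ccal)$, or equivalently the full subcategory of $\PSh(\Ccal)$ generated from the representables by closing under pullbacks (and $<\!\kappa$-cochain limits) computed in $\PSh(\Ccal)$. Since $\PSh(\Ccal)$ has all limits, such a closure exists; the Yoneda embedding $\Ccal \hookrightarrow \widehat{\Ccal}$ is fully faithful, preserves the limits that exist in $\Ccal$ by Remark~\ref{rem:collagepres}, and by construction $\widehat{\Ccal}$ now has pullbacks. Care is needed to keep $\widehat{\Ccal}$ small (or at least essentially small of controlled size) so that the cardinality bookkeeping of wovenness survives: closing a category with hom-sets of size $\leq\kappa$ under $<\!\kappa$-limits produces hom-sets of size at most $\kappa^{<\kappa}$, so I would either absorb this into the warp count directly or work with a skeleton.

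\textbf{Step 2: transport the site and match the sheaf topoi.}
Let $\widehat{J}$ be the Grothendieck topology on $\widehat{\Ccal}$ generated by the images of $J$-covering presieves. The representable presheaves lie in $\widehat{\Ccal}$, and the new objects are limits of old ones, so the comparison morphism of sites is both \emph{cover-reflecting} and dense; by the Comparison Lemma this yields an equivalence $\Sh(\widehat{\Ccal},\widehat{J}) \simeq \Sh(\Ccal,J)$. Crucially, under this equivalence the inclusion into $\PSh(\widehat{\Ccal})$ and the inclusion into $\PSh(\Ccal)$ present \emph{the same subtopos}, and a $\kappa$-point of one restricts to a $\kappa$-point of the other (a point of $\PSh(\widehat{\Ccal})$ preserving $<\!\kappa$-limits is precisely a $<\!\kappa$-continuous flat functor on $\widehat{\Ccal}$, which restricts to the same datum on $\Ccal$). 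Hence enough $\kappa$-points for $\Sh(\widehat{\Ccal},\widehat{J})$ delivers enough $\kappa$-points for $\Sh(\Ccal,J)$.

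\textbf{Step 3: check wovenness is preserved, then apply Theorem~\ref{thm:superespindola}.}
The remaining task is to exhibit a $\kappa$-warp $\widehat{\Bcal}$ for $(\widehat{\Ccal},\widehat{J})$ from the given warp $\Bcal$ on $\Ccal$. Over an old object the existing presieves still work after reindexing; over a new object $L = \lim_{\alpha} C_\alpha$ one pulls the $\Bcal$-presieves back along the projections $L \to C_\alpha$, and the first warp axiom (pullback-stability up to containment) is exactly what guarantees these pullbacks again contain $\Bcal$-presieves. The multicomposite/$\kappa$-cotree axiom transfers because $<\!\kappa$-cotree multicomposites are computed using precisely the $<\!\kappa$-cochain limits that both categories possess and that the embedding preserves. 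With $(\widehat{\Ccal},\widehat{J})$ now a $\kappa$-woven site on a category with $\kappa$-small connected limits, Theorem~\ref{thm:superespindola} applies verbatim (under $\kappa$-dependent choice and excluded middle), and Step~2 pushes the conclusion back to $\Sh(\Ccal,J)$.

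\textbf{The main obstacle.}
The delicate point is \emph{not} the formal adjunction of pullbacks but the \emph{simultaneous} control of cardinality and of the warp under the enlargement: I expect the genuine work to be verifying that $\widehat{\Bcal}(L)$ over the freshly-added limit objects still has size $\leq\kappa$ and that every $\widehat{J}$-cover over a new object is dominated by a multicomposite of a $\kappa$-cotree built from $\widehat{\Bcal}$-presieves. Establishing the Comparison-Lemma equivalence is standard, but confirming that the \emph{woven} structure — rather than merely the topology — descends through the pullback-completion is where the cardinal arithmetic $\kappa^{<\kappa}$ threatens to intrude, and handling it cleanly (e.g.\ by only adjoining the limits actually demanded by the warp, rather than all pullbacks) is the crux of the argument.
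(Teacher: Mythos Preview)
Your overall strategy matches the paper's: close $\Ccal$ under pullbacks inside $\PSh(\Ccal)$, transport the topology so that the sheaf topoi agree, check the enlarged site is still $\kappa$-woven, and invoke Theorem~\ref{thm:superespindola}. Where you diverge is in Step~3, and the obstacle you flag there is genuine \emph{for the warp you propose} but is completely avoidable.

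Your plan is to build $\widehat{\Bcal}(L)$ over a new object $L$ by pulling back $\Bcal$-presieves along projections. This is both over-engineered and, as you note, threatens to blow up the count to $\kappa^{<\kappa}$. The paper's move is far simpler: at every object of $\overline{\Ccal}$ one adjoins a \emph{single} extra presieve, namely the presieve of \emph{all} morphisms whose domain lies in $\Ccal$. On an object already in $\Ccal$ this presieve generates the maximal sieve and is harmless; on a genuinely new object it is the canonical cover by representables. Adding one presieve per object keeps $\lvert\widehat{\Bcal}(L)\rvert \leq \kappa$ trivially, so the cardinal arithmetic never arises. The two warp axioms are then immediate: the pullback of this presieve along any map contains the corresponding presieve at the domain, and every $\overline{J}$-covering sieve is the multicomposite of a $\kappa$-cotree whose first layer is this presieve (bringing you back into $\Ccal$) followed by the original $\Bcal$-presieves thereafter.

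A minor further point: the new objects in the pullback closure are iterated pullbacks of representables, not $<\!\kappa$-cochain limits as your Step~3 suggests, so your proposed indexing of $\widehat{\Bcal}(L)$ by ``projections $L \to C_\alpha$'' does not quite parse. The single-presieve trick makes this irrelevant.
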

\begin{proof}
Let $\overline{\Ccal}$ be the closure of $\Ccal$ in $\PSh(\Ccal)$ under pullbacks. The Grothendieck topology $\overline{J}$ on $\overline{\Ccal}$ such that $\Sh(\overline{\Ccal},\overline{J}) \simeq \Sh(\Ccal,J)$ has a $\kappa$-warp extending the one for $J$ by adding to each of the objects in $\overline{\Ccal}$ the presieve of all morphisms from objects of $\Ccal$ (this generates the maximal sieve for objects in $\Ccal$). It is easily checked that the pullback of such a presieve contains the corresponding presieve and that every $\overline{J}$-covering sieve is the multicomposite of a $\kappa$-cotree in which the first layer is this presieve. As such, we may apply Theorem \ref{thm:superespindola} to deduce the result.
\end{proof}

\begin{para}[Pullbacks resist being eliminated altogether]
It is not possible to eliminate pullbacks quite as directly in Proposition \ref{prop:someimprovement} due to the same obstacle that led us to Theorem \ref{thm:supermakkai}. Namely, as soon as we reach the limit ordinal $\omega$ in the induction of Theorem \ref{thm:improvement}, we lose the ability to improve points with respect to infinite sieves. With some more work, we will be able to improve Deligne's original result, though.
\end{para}

Careful analysis of Deligne's proof leads one to conclude that we only ever use the universal property of pushouts in $\pt(\Ecal)$ to deduce `functorial amalgamation'.

\begin{defn}[Functorial amalgamation]
\label{defn:funtamalg}
We say that $\Ccal$ has \textbf{functorial amalgamation} if there merely exists a functor $\mathsf{amalg}: \Ccal^\lefthalfcap \to \Ccal$ together with a natural transformation $\id_{\Ccal^\lefthalfcap} \Rightarrow  \Delta_{\Ccal} \circ \mathsf{amalg}$.
\end{defn}

Functorial amalgamation is a significant weakening of the requirement of pushouts, where even more than the universal property of the pushout is lost: for example, we do not impose that the amalgamation of a cospan where both maps are identities at an object $C$ returns $C$ itself. Nonetheless, the structure of the proof of Proposition \ref{indindind} can still be applied to deduce the following corollary.    

\begin{cor}[Functorial amalgamation in $\Ind(\Ccal)$]
\label{cor:indfuntamalg}
Let $\Ccal$ be any small category. The following are equivalent:
\begin{enumerate}
    \item $\Ccal$ has functorial amalgamation.
    \item $\Ind(\Ccal)$ has a functorial amalgamation such that $\mathsf{amalg}$ preserves finitely presentable objects.
\end{enumerate}
\end{cor}
\begin{proof}
The proof that $(1) \Rightarrow (2)$ proceeds exactly as it did in Proposition \ref{indindind}.

To deduce $(2) \Rightarrow (1)$, observe that the hypothesis provides functorial amalgamation in the idempotent completion of $\Ccal$ (which is equivalent to the category of finitely presentable objects of $\Ind(\Ccal)$). Every object in the idempotent completion is canonically a retract of one in $\Ccal$, so given the amalgam of a cospan in $\Ccal$ taken in the idempotent completion, we can extend by the section of the retraction map to obtain an amalgam in $\Ccal$.
\end{proof}

\begin{rem}[Eliminating idempotent completeness]
While $\Ind(\Ccal)$ having pushouts only implies that the idempotent completion of $\Ccal$ has pushouts by \Cref{indindind}, we can deduce from \Cref{cor:indfuntamalg} that this still forces $\Ccal$ to have functorial amalgamation.
\end{rem}

\begin{exa}[Adhesiveness provides functorial amalgamation]
\label{exa:CM}
An interesting example of functorial amalgamation is the following. Suppose $\Ccal$ has pushouts and let $\Ccal_{\Mcal} \to \Ccal$ be a wide subcategory (identified with its class of maps $\Mcal \subset \Ccal^\to$) such that $\Mcal$ is stable under pushouts. This means precisely that in the diagram below, we can construct the dashed functor, which will automatically be a functorial amalgamation. Yet, in full generality this construction will not provide pushouts for $\Ccal_{\Mcal}$, since we do not demand that the pushout comparison map belongs to $\Mcal$.
% https://q.uiver.app/#q=WzAsNCxbMCwxLCJcXENjYWxfe1xcbWF0aGNhbHtNfX0iXSxbMSwxLCJcXENjYWwiXSxbMSwwLCJcXENjYWxee1xcbGVmdGhhbGZjYXB9Il0sWzAsMCwiXFxDY2FsX3tcXG1hdGhjYWx7TX19XntcXGxlZnRoYWxmY2FwfSJdLFswLDEsImkiLDJdLFsyLDFdLFszLDIsImlee1xcbGVmdGhhbGZjYXB9Il0sWzMsMCwiIiwwLHsic3R5bGUiOnsiYm9keSI6eyJuYW1lIjoiZGFzaGVkIn19fV1d
\[\begin{tikzcd}[ampersand replacement=\&]
	{\Ccal_{\Mcal}^{\lefthalfcap}} \& {\Ccal^{\lefthalfcap}} \\
	{\Ccal_{\Mcal}} \& \Ccal
	\arrow["i"', from=2-1, to=2-2]
	\arrow[from=1-2, to=2-2]
	\arrow["{i^{\lefthalfcap}}", from=1-1, to=1-2]
	\arrow[dashed, from=1-1, to=2-1]
\end{tikzcd}\]

Some concrete instances of this construction are as follows:
\begin{enumerate}
    \item The simplest is given the case where $\Ccal$ is $\mathsf{Fin}$, the category of finite sets, and $\Ccal_\Mcal$ is its wide subcategory of monomorphisms. The induced amalgamation is not a pushout, since (using $n$ to denote an $n$-element set), the dashed arrow in the following diagram in $\mathsf{Fin}$ is not a monomorphism.
% https://q.uiver.app/#q=WzAsNSxbMCwxLCIyIl0sWzEsMSwiMyJdLFsxLDAsIjIiXSxbMCwwLCIxIl0sWzIsMiwiMiJdLFswLDEsIiIsMix7InN0eWxlIjp7InRhaWwiOnsibmFtZSI6Im1vbm8ifX19XSxbMiwxLCIiLDAseyJzdHlsZSI6eyJ0YWlsIjp7Im5hbWUiOiJtb25vIn19fV0sWzMsMiwiIiwwLHsic3R5bGUiOnsidGFpbCI6eyJuYW1lIjoibW9ubyJ9fX1dLFszLDAsIiIsMCx7InN0eWxlIjp7InRhaWwiOnsibmFtZSI6Im1vbm8ifX19XSxbMSwzLCIiLDIseyJzdHlsZSI6eyJuYW1lIjoiY29ybmVyIn19XSxbMiw0LCIiLDIseyJjdXJ2ZSI6LTIsImxldmVsIjoyLCJzdHlsZSI6eyJoZWFkIjp7Im5hbWUiOiJub25lIn19fV0sWzAsNCwiIiwwLHsiY3VydmUiOjIsImxldmVsIjoyLCJzdHlsZSI6eyJoZWFkIjp7Im5hbWUiOiJub25lIn19fV0sWzEsNCwiIiwwLHsic3R5bGUiOnsiYm9keSI6eyJuYW1lIjoiZGFzaGVkIn19fV1d
\[\begin{tikzcd}[ampersand replacement=\&]
	1 \& 2 \\
	2 \& 3 \\
	\&\& 2
	\arrow[tail, from=2-1, to=2-2]
	\arrow[tail, from=1-2, to=2-2]
	\arrow[tail, from=1-1, to=1-2]
	\arrow[tail, from=1-1, to=2-1]
	\arrow["\lrcorner"{anchor=center, pos=0.125, rotate=180}, draw=none, from=2-2, to=1-1]
	\arrow[curve={height=-12pt}, Rightarrow, no head, from=1-2, to=3-3]
	\arrow[curve={height=12pt}, Rightarrow, no head, from=2-1, to=3-3]
	\arrow[dashed, from=2-2, to=3-3]
\end{tikzcd}\]
    
    \item More generally, monomorphisms are stable under pushout in any \textit{adhesive category} by definition, so we can perform this construction. %Let $\Ccal$ to be the opposite of any regular category and $\Mcal$ the dual of its class of regular epimorphisms.
\end{enumerate}
\end{exa}

% \begin{rem}[A semantically inspired definition]

%     Construction \ref{constr:CM} is particularly interesting because for its relationship to locally decidable topoi. Let $\Ccal$ be a category and $\Mcal$ its set of monomorphisms. Then the inclusion $\Ccal_{\mathsf{mono}} \to \Ccal$ can be seen as a comorphism of trivial sites inducing a geometric embedding,

%     \[j : \Set^{\Ccal_{\mathsf{mono}}} \to \Set^{\Ccal}.\]

%      $\Set^{\Ccal_{\mathsf{mono}}}$ is a locally decidable topos because it has a site made of epimorphisms
% \end{rem}

\begin{defn}[Functorial coamalgamation] \label{defn:functcoamalgam}
We say that a category $\Ccal$ has \textbf{functorial coamalgamation} if $\Ccal\op$ has functorial amalgamation. We say that a site $(\Ccal,J)$ has \textbf{functorial coamalgamation} if this is true for $\Ccal$ \textit{and} for a $J$-covering presieve $\{f_i:C_i \to C\}_{i \in I}$ and any morphism $h:D \to C$, the coamalgamation squares:
\begin{equation}  \label{eq:coamalgams}  
% https://q.uiver.app/#q=WzAsNCxbMCwxLCJEIl0sWzEsMSwiQyJdLFsxLDAsIkNfaSJdLFswLDAsIkRfaSJdLFswLDEsImgiLDJdLFsyLDEsImZfaSJdLFszLDIsImhfaSJdLFszLDAsImdfaSIsMl1d
\begin{tikzcd}[ampersand replacement=\&]
	{D_i} \& {C_i} \\
	D \& C
	\arrow["h"', from=2-1, to=2-2]
	\arrow["{f_i}", from=1-2, to=2-2]
	\arrow["{h_i}", from=1-1, to=1-2]
	\arrow["{g_i}"', from=1-1, to=2-1]
\end{tikzcd}
\end{equation}
are such that $\{g_i:D_i \to D\}_{i \in I}$ forms a $J$-covering presieve. (If the coamalgamation squares are pullbacks, this is automatic).
\end{defn}

\begin{prop}[Constructing improvements without pullbacks]
\label{prop:functcoamalgimprove}
Let $\Phi$ be a class of limit diagrams. Consider the inclusion $j: \Sh(\Ccal,J) \to \PSh(\Ccal)$ where $(\Ccal,J)$ is a site with functorial coamalgamation in the sense of Definition \ref{defn:functcoamalgam} and $J$-covering sieves are generated by presieves in $\Phi\op$. Let $p$ be a point of $\PSh(\Ccal)$ such that $p^*$ preserves $\Phi$-limits and $x \neq y \in p^*(j_*(X))$. Then $p$ has an improvement $p'$ with respect to $(x,y,m,w)$ for all $j$-dense $m:S \rightarrowtail T$ and $w \in p^*(T)$ such that ${p'}^*$ also preserves $\Phi$-limits.
\end{prop}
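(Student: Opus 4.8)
The plan is to rerun the argument of Proposition \ref{prop:someimprovement} essentially verbatim, substituting functorial coamalgams for the pullbacks that are no longer available and weakening the sheaf isomorphism to a mere injection. By Lemma \ref{lem:generatorsuffices} it suffices to produce improvements against the dense monomorphisms arising from $J$-covering sieves $S = \{f_i : C_i \to C\}$, so I fix such an $S$ together with an element $w \in p^*(\yo(C))$ and, exactly as before, slide the representables across the gluing line so that $w$ becomes a morphism $w : p \to \yoco(C)$ in $\Pro(\Ccal) = \pt(\PSh(\Ccal))\op$.

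First I would construct the candidate improvement points. Applying to $\Ccal\op$ the corollary that functorial amalgamation is inherited by ind-completions, we learn that $\Pro(\Ccal)$ has functorial coamalgamation and that the coamalgamation functor preserves the relevant cofiltered limits of coamalgams. For each $i$ I form the coamalgam $p_i := \mathsf{coamalg}\bigl(p \xrightarrow{w} \yoco(C) \xleftarrow{\yoco(f_i)} \yoco(C_i)\bigr)$ with its structural comparison map $g_i : p_i \to p$. Since $p$ is presented over $\Exp(w)\op$ by the representables $\yoco(D)$ factoring $w$, preservation of cofiltered limits shows that each $p_i$ is likewise presented over $\Exp(w)$ by the representable coamalgams $\yoco(D_i)$, where $D_i = \mathsf{coamalg}(D \xrightarrow{v} C \xleftarrow{f_i} C_i)$ in $\Ccal$. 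In particular $p_i^*$ is again a filtered colimit indexed by $\Exp(w)\op$, so — just as for $p$ in Proposition \ref{prop:someimprovement} — it preserves $\Phi$-limits.

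The heart of the argument, and the place where the absence of pullbacks bites, is the analogue of the sheaf computation \eqref{eq:sheafexpand}--\eqref{eq:colimcommute}. Here I cannot expect the exact isomorphism $\ev(\yoco(D),j_*(X)) \cong \lim_{f_i} \ev(\yoco(D_i),j_*(X))$, because the coamalgamation squares are only weak pullbacks. Instead I would invoke Remark \ref{rem:relax}: an injection is all that is needed. By the second clause of Definition \ref{defn:functcoamalgam} the coamalgamation presieve $\{g_i : D_i \to D\}$ is $J$-covering, and functoriality of coamalgamation turns a final subdiagram $S' \subseteq S$ of shape in $\Phi\op$ into a diagram $\{D_i\}_{i \in S'}$ of the same shape whose legs still generate a covering sieve (finality ensures each $g_i$ factors through some $g_j$ with $j \in S'$). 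Since $j_*(X)$ is a sheaf, hence $J$-separated, the canonical comparison
\[ \ev(\yoco(D),j_*(X)) \hookrightarrow \lim_{f_i \in S'} \ev(\yoco(D_i),j_*(X)) \]
is therefore injective, the target being a genuine $\Phi$-limit.

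Finally I would take the filtered colimit over $\Exp(w)\op$ of this injection. Because filtered colimits preserve monomorphisms and $p^*$ preserves $\Phi$-limits — so that the colimit commutes with the limit over $S'$ exactly as in \eqref{eq:colimcommute} — one obtains an injection $p^*(j_*(X)) \hookrightarrow \lim_{f_i \in S'} p_i^*(j_*(X))$. Given $x \neq y$ in $p^*(j_*(X))$, injectivity forces $g_i^*(x) \neq g_i^*(y)$ in $p_i^*(j_*(X))$ for some index $i$, and this $p_i$, which preserves $\Phi$-limits by the second paragraph, is the required improvement. The main obstacle is thus entirely concentrated in the third paragraph: checking that separatedness with respect to the covering coamalgamation presieve yields the needed injection into a $\Phi$-limit, standing in for the pullback-driven isomorphism of Proposition \ref{prop:someimprovement}; everything else is a faithful transcription of that proof with coamalgams in place of pullbacks.
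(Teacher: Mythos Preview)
Your approach is precisely the paper's: the paper's proof is a single sentence saying that ``everything works identically for a site with functorial coamalgamation after replacing the pullbacks $v^*(f_i)$ with the coamalgams $g_i$'', and your proposal spells out exactly what that replacement entails. In particular, your observation that the sheaf isomorphism \eqref{eq:sheafexpand} degrades to an injection (obtained from $J$-separatedness of $j_*(X)$ with respect to the covering coamalgamation presieve of Definition~\ref{defn:functcoamalgam}) and that this suffices by Remark~\ref{rem:relax} is the honest content hiding behind the paper's brevity; the paper evidently intends Remark~\ref{rem:relax} to be invoked at this point but does not say so explicitly.
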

\begin{proof}
Checking the proof of Proposition \ref{prop:someimprovement}, we see that everything works identically for a site with functorial coamalgamation after replacing the pullbacks $v^*(f_i)$ with the coamalgams $g_i$ of \eqref{eq:coamalgams}.
\end{proof}

% \begin{lem}
% Let $(\Ccal,J)$ be a site with functorial coamalgamation, and let $\bar{\Ccal}$ be the closure of $\Ccal$ in $\PSh(\Ccal)$ under pullbacks. There is a dense morphism of sites $(\Ccal,J) \to (\bar{\Ccal},\bar{J})$, where $\bar{J}$ is the induced Grothendieck topology. Suppose that $(\Ccal,J)$ has:
% \begin{itemize}
%     \item finitely generated covering sieves or
%     \item a $\kappa$-warp at every object.
% \end{itemize}
% Then $(\bar{\Ccal},\bar{J})$ also has the respective property, so $\Sh(\Ccal,J)$ has enough points.
% \end{lem}
% \begin{proof}

% \end{proof}
% \begin{para}[Limited applicability]
% We may deduce from Proposition \ref{prop:functcoamalgimprove} a mild generalization of Corollary \ref{cor:deligne}. Rather than stating this explicitly, we observe that even functorial coamalgamation is formally stronger than we need: we don't need coamalgams of arbitrary cospans, only those which are needed to construct the diagram indexed by $\Exp(w)$ in the proof of Proposition \ref{prop:someimprovement}. Indeed, while we have argued abstractly that $\Ind(\Ccal\op)$ inherits pullbacks (resp. functorial coamalgamation), that proof contains an explicit presentation of the coamalgams involved as colimits of representables. %but since we have already eliminated pullbacks from Theorems \ref{thm:supermakkai} and \ref{thm:superespindola} 
% \end{para}

\begin{cor}[Deligne without pullbacks] \label{cor:funamalgversion}
Let $(\Ccal,J)$ be a site with functorial coamalgamation in which every $J$-covering sieve contains a finite covering presieve; this corresponds to taking $\Phi$ to be the class of finite diagrams in Proposition \ref{prop:functcoamalgimprove}. Then $\Sh(\Ccal,J)$ has enough points.
\end{cor}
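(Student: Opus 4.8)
The plan is to follow the proof of Corollary~\ref{cor:deligne} almost verbatim, replacing the appeal to Proposition~\ref{prop:someimprovement} by its functorial-coamalgamation counterpart Proposition~\ref{prop:functcoamalgimprove}, and taking $\Phi$ to be the class of finite diagrams throughout. Concretely, I would apply Theorem~\ref{thm:improvement} to the canonical inclusion $j:\Sh(\Ccal,J)\rightarrowtail\PSh(\Ccal)$, whose codomain has enough points (the representables). All the work therefore reduces to checking the improvement hypothesis of Theorem~\ref{thm:improvement}: that every point $p$ of $\PSh(\Ccal)$ admits an improvement with respect to every datum $(x,y,m,w)$ with $m$ a $j$-dense monomorphism. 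The crucial simplification in the finite case is that the standing hypothesis of Proposition~\ref{prop:functcoamalgimprove} that ``$p^*$ preserves $\Phi$-limits'' is automatic, since points preserve finite limits by definition; consequently that property is never lost when passing to the colimit $p_\omega$ at a limit ordinal in the induction of Theorem~\ref{thm:improvement}, and the secondary obstruction flagged in~\ref{para:notallpoints} simply does not occur.

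The one real discrepancy with Proposition~\ref{prop:functcoamalgimprove} is that we only assume each $J$-covering sieve \emph{contains} a covering sieve with a finite final subdiagram, rather than possessing one outright. I would bridge this using the reduction lemmas. By Lemma~\ref{lem:generatorsuffices} it suffices to produce improvements with respect to the dense monomorphisms $m:S\rightarrowtail\yo(C)$ coming from $J$-covering sieves, since $\PSh(\Ccal)$ is generated by representables. Given such an $S$, choose by hypothesis a covering subsieve $S_0\hookrightarrow S$ with a finite final subdiagram. Running the argument of Proposition~\ref{prop:functcoamalgimprove} on $S_0$ -- whose finiteness is exactly what makes the filtered colimit defining $p^*$ in~\eqref{eq:colimcommute} commute with the limit over the sieve -- produces an improvement of $p$ with respect to $(x,y,S_0\!\rightarrowtail\!\yo(C),w)$. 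Since $S_0\hookrightarrow S$ sits in a commuting square over $\yo(C)$ with $u=\id_{\yo(C)}$, the first clause of Lemma~\ref{lem:fewercospans} promotes this to the desired improvement with respect to $(x,y,S\!\rightarrowtail\!\yo(C),w)$.

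With improvements now available for all relevant data, Theorem~\ref{thm:improvement} applies directly (invoking Zorn's Lemma as there) and yields enough points of $\Sh(\Ccal,J)$. The only step requiring genuine care is the one already packaged inside Proposition~\ref{prop:functcoamalgimprove}: one must confirm that replacing the pullback legs used in Proposition~\ref{prop:someimprovement} by the coamalgams $g_i$ of~\eqref{eq:coamalgams} still yields a $J$-covering presieve -- which is precisely the extra clause built into Definition~\ref{defn:functcoamalgam} -- and that the sheaf expansion~\eqref{eq:sheafexpand} and the colimit--limit interchange~\eqref{eq:colimcommute} survive this substitution. As that verification is subsumed by Proposition~\ref{prop:functcoamalgimprove}, nothing beyond the reduction to finitely generated subsieves described above is needed, and I expect this reduction to be the only nontrivial new ingredient.
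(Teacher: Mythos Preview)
Your proposal is correct and matches the paper's intended approach: the corollary is stated without proof, as an immediate consequence of Proposition~\ref{prop:functcoamalgimprove} combined with Theorem~\ref{thm:improvement}, exactly parallel to the proof of Corollary~\ref{cor:deligne}. Your extra care in bridging the ``contains a covering subsieve with finite final subdiagram'' hypothesis to the ``has a finite final subdiagram'' hypothesis of Proposition~\ref{prop:functcoamalgimprove} via Lemma~\ref{lem:fewercospans} is sound and more explicit than the paper itself.
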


\begin{rem}[Not all locally finitely presentable topoi]
The topoi which admit a site $(\Ccal,J)$ such that every $J$-covering sieve contains a finite covering presieve are exactly the \emph{locally finitely generated} topoi (or \textit{compactly generated} topoi in the terminology of \cite{rogers2021supercompactly}): those which have a generating set of finitely generated (also called `quasi-compact') objects. The class of locally finitely generated topoi notably includes the locally finitely presentable ones.

We cannot hope to completely remove functorial coamalgamation from \Cref{cor:funamalgversion}, since any atomic topos is locally finitely generated, and there are well-known examples of atomic topoi without points, such as the classifying topos for the theory of uncountable complete binary trees due to Malitz, see \cite[Example D3.4.14]{elephant2}.

On the other hand, this does not \textit{a priori} determine whether or not there exists a locally finitely presentable topos without enough points (assuming the axiom of choice). Since the release of the original preprint of the present article, a counterexample showing that not all locally finitely presentable topoi have enough points has been provided by Jérémie Marquès: see \Cref{rem:Marques}.
\end{rem}

\begin{rem}
\label{rem:Marques}
Since the preprint version of the present paper appeared, Jérémie Marquès claims to have shown that the example of an atomic topos with no points \cite[Example D3.4.14]{elephant2} is actually locally finitely presentable \cite{jeremy}. It would follow that being locally finitely presentable is not sufficient to admit \textit{any} points at all, even assuming the axiom of choice!
\end{rem}

In spite of this, we can extend Corollary \ref{cor:deligne} at least a little by constraining the local size of the site. 

\begin{cor} \label{cor:lfpcountable}
A locally finitely presentable topos whose subcategory of finitely presentable objects has essentially countable slice categories has enough points.
\end{cor}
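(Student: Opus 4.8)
The plan is to exhibit $\Ecal$ as the category of sheaves on a countable (but not necessarily pullback-complete) $\omega$-woven site and then invoke Theorem \ref{thm:pbfree} at $\kappa = \omega$. Write $\Ecal_{fp}$ for the full subcategory of finitely presentable objects; by hypothesis it is equivalent to a countable category, and as for any locally finitely presentable category it is closed under finite colimits and satisfies $\Ecal \simeq \Ind(\Ecal_{fp}) \simeq \mathrm{Lex}(\Ecal_{fp}\op, \Set)$, the latter by Gabriel--Ulmer. Setting $\Ccal := \Ecal_{fp}$, this identifies $\Ecal$ with $\Sh(\Ccal, J)$ where $J$ is the topology whose sheaves are exactly the finite-limit-preserving functors $\Ccal\op \to \Set$; concretely $J$ is generated by the cocones of finite colimit diagrams in $\Ccal$ (it suffices to take the empty cocone on the initial object together with the binary coproduct and coequalizer cocones). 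The point to retain is that $\Ccal$ has finite colimits but we do \emph{not} assume it has pullbacks.

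First I would observe that $J$ is generated by only countably many sieves: there are countably many finite diagrams in the countable category $\Ccal$, each with a chosen colimit cocone, so the generating cocone-sieves form a countable family. Imitating the proof of Corollary \ref{cor:makkai}, I would then take as warp $\Bcal(C)$ the collection of all pullbacks, computed as subobjects of $\yo(C)$ in $\PSh(\Ccal)$, of these generating sieves along morphisms out of $C$; that is, $\Bcal(C) = \{\, w^*\langle\sigma\rangle \mid w: C \to C''\text{ in }\Ccal,\ \sigma\text{ a generating cocone on }C''\,\}$. Since $\Ccal$ is countable there are countably many such pairs $(w,\sigma)$, so each $\Bcal(C)$ is countable. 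The first warp axiom is immediate because pullback of sieves is strictly functorial: $v^*(w^*\langle\sigma\rangle) = (w v)^*\langle\sigma\rangle$ again lies in $\Bcal$. The second axiom---that every $J$-covering sieve contains the multicomposite of a cotree whose node-predecessors lie in $\Bcal$---follows from the fact that $\Bcal$ is closed under pullback and generates $J$, exactly as in the recovery of Makkai and Reyes' theorem. Thus $(\Ccal, J)$ is $\omega$-woven.

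With woven-ness in hand, I would apply Theorem \ref{thm:pbfree} at $\kappa = \omega$: limits of $<\!\omega$-cochains are trivially available (they are just the final objects of finite chains), no pullbacks are required, and under the classical assumptions of countable dependent choice and excluded middle the theorem yields that $\Sh(\Ccal, J) = \Ecal$ has enough $\omega$-points. Since every point preserves finite limits by definition, these are simply enough points, which is the desired conclusion.

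The main obstacle is the verification of woven-ness in the absence of pullbacks in $\Ecal_{fp}$. The naive warp consisting of the finite colimit cocones themselves fails the pullback axiom: by universality of colimits in $\Ecal$, the pullback of a cocone $\{D_i \to C''\}$ along $w : C \to C''$ is again a colimit cocone, but on objects of the form $D_i \times_{C''} C$, which need not be finitely presentable and so do not form a cocone in the site $\Ccal$. The resolution is to work throughout with pullbacks of \emph{sieves} in $\PSh(\Ccal)$ rather than pullbacks of \emph{objects} in $\Ccal$---this is precisely the slack that Theorem \ref{thm:pbfree} is designed to exploit---and to lean on the countability of $\Ccal$ to keep the resulting warp countable. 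A secondary point to pin down is the reading of ``essentially countable'', which I take to mean that $\Ecal_{fp}$ is equivalent to a category with countably many morphisms; this is exactly what guarantees that the site $\Ccal$, and hence each $\Bcal(C)$, is genuinely countable.
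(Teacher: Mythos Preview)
Your argument is correct and, if anything, cleaner than the paper's own. Both proofs pass through the pullback-closure idea underlying Theorem~\ref{thm:pbfree}, but you package the whole thing as a direct application of that theorem at $\kappa=\omega$: you check that the canonical site on $\Ecal_{fp}$ is $\omega$-woven (using countability of morphisms to bound each $\Bcal(C)$, and pullbacks of \emph{sieves} in $\PSh(\Ccal)$ to verify the first warp axiom in the absence of pullbacks in $\Ccal$), and then invoke Theorem~\ref{thm:pbfree} as a black box. The paper instead unpacks the improvement procedure by hand: after closing under pullbacks it observes that every $\overline{J}$-cover decomposes as the ``minimal'' sieve of maps from $\Ccal$ followed by finite sieves, then spends the first $\omega$ stages of the induction of Theorem~\ref{thm:improvement} on the minimal sieves (this is where countability of objects enters) and finishes with a Deligne-style improvement against the finite sieves. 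Your route buys modularity and a uniform appeal to the $\omega$-woven machinery; the paper's route makes the two qualitatively different kinds of improvement (filtered minimal sieves versus finite sieves) visible, which is closer in spirit to the obstruction discussed in \ref{para:notallpoints} and to why the argument does not obviously extend beyond the countable case.
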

\begin{proof}
Locally finitely presentable toposes are precisely those admitting a site $(\Ccal,J)$ in which $J$ is generated by sieves having finite final presieves\footnote{This is essentially the content of \cite[Prop. 5.5]{di2020gabriel}, although the condition of finality of the presieve was forgotten in the formulation of the theorem there.}, which corresponds to taking $\Phi$ to be the class of finite diagrams in Proposition \ref{prop:functcoamalgimprove} but without assuming functorial coamalgamation. The stated hypothesis ensures that we can choose a site in which there are at most countably many morphisms with a given object as codomain. This means that there are at most countably many finite $J$-covering families over each object, whence the collection of all such is an $\omega$-warp for $J$ and $(\Ccal,J)$ satisfies the hypotheses of \Cref{thm:pbfree}. %Since locally finitely presentable objects are closed under finite colimits, we can assume that $\Ccal$ has these without loss of generality (if $\Ccal$ was countable this remains countable).
%Closing such a site under pullbacks as in Theorem \ref{thm:pbfree} produces a (countable) site in which every covering sieve can be presented as the multicomposite of the `minimal' (filtered) sieve of all morphisms coming from objects in the original site, followed by finite sieves. As such, since there are only countably many objects in the site, we can improve a representable point with respect to the minimal sieves in the first $\omega$ stages of the induction of Theorem \ref{thm:improvement} and then improve the resulting point with respect to the remaining finite covering sieves as we argued in the proof of Corollary \ref{cor:deligne}.
\end{proof}

\begin{rem}[Not quite Hofmann-Lawson]
It is worth noting that if we specialize Corollary \ref{cor:lfpcountable} to the localic case, we do not directly recover the well-established result that continuous locales have enough points, proved by Hofmann and Lawson \cite{hofmann1978spectral}. While parts of the proof have a similar flavour to improvements, they critically use Zorn's lemma in a way that cannot be directly generalized beyond frames, so cannot be presented as a variant of Deligne's strategy. 
\end{rem}

\begin{para}[Morphisms and comorphisms of sites]
Of course, if a topos $\Gcal$ admits a geometric surjection from a topos with enough points, then it too has enough points. From a site-theoretic perspective, we can construct geometric morphisms using either morphisms or comorphisms of sites. Caramello established necessary and sufficient conditions for when the resulting geometric morphisms are surjections \cite{caramello2019denseness}. In principle one could characterize the sites $(\Dcal,J)$ admitting a surjection-inducing morphism of sites to (resp.\ a surjection-inducing comorphism of sites from) a site $(\Ccal,K)$ satisfying the conditions of Theorem \ref{thm:pbfree} or Corollary \ref{cor:funamalgversion}. Lacking compelling examples, we leave that effort to future work.
\end{para}

\subsection{Zooming out: the 2-category of topoi}\label{sec:promenade}

There is a more abstract perspective on the construction performed in the proof of Theorem \ref{thm:improvement}. We considered a parallel pair of morphisms $f,g:Y\rightrightarrows X$ in $\Fcal$ and a point $p$ such that $p^*j_*f \neq p^*j_*g$; from these we constructed a point $p'$ of $\Fcal$ equipped with a morphism $p \Rightarrow p';j$ in $\pt(\Ecal)$:
% https://q.uiver.app/#q=WzAsMyxbMSwwLCJcXEZjYWwiXSxbMSwxLCJcXEVjYWwiXSxbMCwxLCJcXHRleHR7U2V0fSJdLFswLDEsImYiXSxbMiwxLCJwIiwyXSxbMiwwLCJwJyIsMCx7InN0eWxlIjp7ImJvZHkiOnsibmFtZSI6ImRhc2hlZCJ9fX1dLFsxLDUsIiIsMCx7InNob3J0ZW4iOnsidGFyZ2V0IjoyMH19XV0=
\[\begin{tikzcd}[ampersand replacement=\&]
	\& \Fcal \\
	{\Set} \& \Ecal.
	\arrow["f", from=1-2, to=2-2]
	\arrow["p"', from=2-1, to=2-2]
	\arrow[""{name=0, anchor=center, inner sep=0}, "{p'}", dashed, from=2-1, to=1-2]
	\arrow[shorten >=2pt, Rightarrow, from=2-2, to=0]
\end{tikzcd}\]
However, we can more properly summarize the result by presenting a collection of `enough' points of $\Ecal$ as a geometric morphism from $\Set^K$ for some set $K$. Thus we construct a square:
% https://q.uiver.app/#q=WzAsNCxbMSwwLCJcXEZjYWwiXSxbMSwxLCJcXEVjYWwiXSxbMCwxLCJcXFNldF5LIl0sWzAsMCwiXFxTZXRee0snfSJdLFswLDEsImYiXSxbMiwxLCJQIiwyXSxbMywwLCJQJyJdLFszLDIsIlxcU2V0XntcXHBpfSIsMl0sWzIsMCwiIiwxLHsibGV2ZWwiOjJ9XV0=
\[\begin{tikzcd}[ampersand replacement=\&]
	{\Set^{K'}} \& \Fcal \\
	{\Set^K} \& \Ecal
	\arrow["f", from=1-2, to=2-2]
	\arrow["P"', from=2-1, to=2-2]
	\arrow["{P'}", from=1-1, to=1-2]
	\arrow["{\Set^{\pi}}"', from=1-1, to=2-1]
	\arrow[Rightarrow, from=2-1, to=1-2]
\end{tikzcd}\]
where on the left $\pi$ is the map sending each point constructed in Theorem \ref{thm:improvement} to the one it was constructed from. In general we will need many points $p'$ for each point $p$, but under some circumstances at most one is needed, as the following result shows.

\begin{thm}
\label{thm:closedinc}
The class of topoi with enough points is closed under closed inclusions. That is, letting $f: \Fcal \to \Ecal$ be a closed inclusion such that $\Ecal$ has enough points, it follows that $\Fcal$ has enough points.
\end{thm}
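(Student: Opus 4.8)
The plan is to exploit the special structure of closed subtoposes to show that the improvement step of Theorem \ref{thm:improvement} becomes essentially trivial: every point of $\Ecal$ that separates a parallel pair in the image of the inclusion already factors through $\Fcal$, so at most one candidate $p'$ (namely $p$ itself) is ever needed, which is exactly the phenomenon flagged before the statement.

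Write $j$ for the inclusion $f$, and recall that a closed subtopos is $\mathbf{sh}_{c_U}(\Ecal)$ for the closed local operator $c_U$ associated to a subterminal object $U \rightarrowtail 1$ in $\Ecal$, whose closure operation on subobjects is $\overline{A} = A \vee (U \times B)$ for $A \rightarrowtail B$. Thus a monomorphism $m: A \rightarrowtail B$ is $j$-dense precisely when $A \vee (U \times B) = B$. Two facts drive the argument. \emph{Fact (i):} if $p$ is a point of $\Ecal$ with $p^* U = \emptyset$, then $p$ is a point of $\Fcal$. Indeed, for $j$-dense $m: A \rightarrowtail B$, applying $p^*$ --- which preserves finite limits, coproducts and images, hence binary products and joins of subobjects --- to $A \vee (U \times B) = B$ yields $p^*A \vee (p^*U \times p^*B) = p^*B$; since $p^*U = \emptyset$ forces $p^*U \times p^*B = \emptyset$, we get $p^*A = p^*B$, so $p^*(m)$ is an isomorphism and projectivity along $m$ is automatic. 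By Corollary \ref{crly:injchar}, $p$ factors through $\Fcal$. \emph{Fact (ii):} for every $F \in \Fcal$ the projection $U \times j_*F \to U$ is an isomorphism; this is the standard recollement property of a closed subtopos and its complementary open subtopos $\Ecal/U$, namely that objects of $\Fcal$ restrict to the terminal object over the open part (see \cite[Chap.~IX]{sheavesingeometry} or \cite{elephant2}). Consequently, if a point $p$ has $p^*U \neq \emptyset$ --- equivalently $p^*U = 1$, since $p^*U \rightarrowtail p^*1 = 1$ in $\Set$ --- then applying $p^*$ to $U \times j_*F \cong U$ gives $p^*(j_*F) \cong 1$. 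Such a point collapses the entire image of $j_*$ to the terminal object, and so cannot separate any parallel pair of the form $j_*a, j_*b$.

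With these in hand the theorem is immediate. Given distinct $a,b: Y \rightrightarrows X$ in $\Fcal$, full faithfulness of $j_*$ gives $j_*a \neq j_*b$ in $\Ecal$; since $\Ecal$ has enough points, Lemma \ref{lem:manifestenoughpoints} supplies a point $p$ of $\Ecal$ separating $j_*a$ and $j_*b$. By Fact (ii) this excludes $p^*U \neq \emptyset$, so $p^*U = \emptyset$, and by Fact (i) $p$ is then a point of $\Fcal$ separating $a$ and $b$. As $a,b$ were arbitrary, $\Fcal$ has enough points. Observe that no transfinite iteration and no appeal to Zorn's Lemma is required: the improvement of each separating $p$ in the sense of Definition \ref{def:improves} is $p$ itself, which is precisely the sense in which at most one $p'$ is needed per $p$ in the square preceding the statement.

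The one place demanding genuine care is Fact (ii) --- the closed-subtopos identity $U \times j_*F \cong U$ and the verification that $p^*$ transports the joins and products appearing in (i) and (ii) correctly. I expect this to be the main (and only real) obstacle; everything else is formal, turning on the elementary observation that $p^*U$, being a subobject of a singleton, is either empty (the point lives in $\Fcal$) or all of $1$ (the point lives in the open complement and is blind to $\Fcal$).
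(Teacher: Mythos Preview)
Your argument is correct. It is a genuinely different, more elementary route than the paper's. The paper argues $2$-categorically: it pulls a surjection $\Set^K \twoheadrightarrow \Ecal$ back along the closed inclusion, invokes the Beck--Chevalley condition for proper morphisms to see that the pulled-back map is again a surjection, and then uses that any (closed) subtopos of $\Set^K$ is $\Set^{K'}$ for some $K' \subseteq K$. Your proof unpacks this last step at the level of individual points: since $p^*U \rightarrowtail 1$ in $\Set$, each point either has $p^*U = \emptyset$ and already lies in $\Fcal$ (your Fact~(i)), or has $p^*U = 1$ and collapses $j_*(\Fcal)$ entirely (your Fact~(ii)); the points that survive are exactly those in $K'$. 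What your approach buys is transparency and a self-contained argument from the local-operator description of closed subtoposes, with the ``at most one $p'$ per $p$'' phenomenon made completely explicit. What the paper's approach buys is immediate portability: the Beck--Chevalley formulation relativises directly to an arbitrary base $\Scal$ (giving ``closed sub-points'') and isolates properness as the relevant hypothesis, suggesting how one might look for analogous results along other classes of inclusions. Both arguments use excluded middle at the same point, namely the dichotomy on subterminals of $\Set$ (equivalently, the identification of subtoposes of $\Set^K$), which the paper flags as the non-constructive ingredient.
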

\begin{proof}
Observe that surjections are stable by pullback along closed inclusions. Indeed, consider a pullback square of geometric morphisms
\[\begin{tikzcd}
	\Gcal \ar[r, "q"] \ar[d, "g"'] \ar[dr, phantom, "\lrcorner", very near start] & \Fcal \ar[d, "f"] \\
	\Scal \arrow[r, "p"'] & \Ecal,
\end{tikzcd}\]
in which $p$ is a surjection and $f$ is a proper inclusion. By Theorem \cite[I.6.1]{moerdijk2000proper}, the Beck-Chevalley condition $p^*f_* \cong g_*q^*$ holds. Since $p^*$ and $f_*$ are faithful by assumption, this forces $q^*$ to be faithful, and hence $q$ is a surjection. Moreover, closed inclusions are stable under pullback, so $g$ is a closed inclusion too. %\cite[A4.5.14(e)]{Sketches} for stability of inclusions, need ref in C3.2 for stability of proper morphisms, should include this as a lemma above. Or use \cite[Thm. 4.1]{moerdijk2000proper}
In particular, considering the case where $\Ecal$ has enough points, we may take $\Scal$ to be $\Set^K$ for $K$ a sufficient set of points, at which point $\Fcal$ admits a surjection from a (closed) subtopos of $\Set^K$, which is necessarily of the form $\Set^{K'}$ for some subset $K' \subseteq K$. Hence $\Fcal$ has enough points.
\end{proof}

\begin{rem}[Johnstone proved the localic case]
One might compare the theorem with \cite[C1.2.6(b)]{Sketches}, which is the localic case of this result. Johnstone remarks there that closed sublocales of spatial locales being spatial is a non-constructive result, and it is worth isolating the non-constructive aspects of our proof, which lies in the fact that a subtopos of the copower $\Set^K$ is necessarily of the form $\Set^{K'}$. For a more generic topos $\Scal$, a closed subtopos of $\Scal^K$ will instead be a $K$-indexed coproduct of closed subtopoi of $\Scal$. Thus we arrive at the following more general result. %\textcolor{orange}{Need to check that $\Set^K$ really is a copower and that claim is valid.}
%Earlier version moved to expunged.tex
\end{rem}

\begin{cor}[Relative version]
Let $\Scal$ be any base topos. Given a topos $\Ecal$ over $\Scal$ which has enough $\Scal$-valued points, any closed subtopos $\Fcal$ of $\Ecal$ has enough ($\Scal$-valued) `closed sub-points', meaning that the collection of geometric morphisms coming from closed subtopoi of $\Scal$ have jointly conservative inverse image functors.
\end{cor}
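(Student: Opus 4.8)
The plan is to run the proof of Theorem~\ref{thm:closedinc} relative to the base $\Scal$ in place of $\Set$, the only genuinely new ingredient being a structural description of closed subtopoi of a copower. First I would unwind the hypothesis: that $\Ecal$ has enough $\Scal$-valued points means there is a set $K$ of geometric morphisms $\Scal \to \Ecal$ whose inverse images are jointly conservative, equivalently that the induced geometric morphism $P: \Scal^K \to \Ecal$ out of the $K$-fold copower $\Scal^K \simeq \coprod_{k \in K}\Scal$ is a surjection.

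Next I would form the pullback of the closed inclusion $f:\Fcal \rightarrowtail \Ecal$ along $P$:
\[\begin{tikzcd}
	\Gcal \ar[r, "q"] \ar[d, "g"'] \ar[dr, phantom, "\lrcorner", very near start] & \Fcal \ar[d, "f"] \\
	\Scal^K \ar[r, "P"'] & \Ecal.
\end{tikzcd}\]
The two stability facts isolated in the proof of Theorem~\ref{thm:closedinc} apply verbatim, since neither used that the base was $\Set$: because $f$ is a proper inclusion and $P$ is a surjection, the Beck--Chevalley condition of \cite[I.6.1]{moerdijk2000proper} forces $q^*$ to be faithful and hence $q$ to be a surjection, while closed inclusions being stable under pullback makes $g:\Gcal \rightarrowtail \Scal^K$ a closed inclusion.

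The decisive step is then to decompose $\Gcal$. Subterminal objects of a coproduct of toposes are computed componentwise --- the frame of subterminals of $\coprod_{k\in K}\Scal$ is the $K$-fold product of that of $\Scal$ --- so open subtopoi, and hence their closed complements, split as $K$-indexed families. Consequently the closed inclusion $g$ presents $\Gcal$ as a coproduct $\coprod_{k\in K}\Fcal_k$ in which each $\Fcal_k \rightarrowtail \Scal$ is a closed subtopos (the $k$th summand). Composing the coproduct inclusions of the summands with $q$ yields a family of geometric morphisms $q_k:\Fcal_k \to \Fcal$ whose domains are closed subtopoi of $\Scal$; since $q$ is a surjection out of $\coprod_k \Fcal_k$, its inverse image $q^*$ is conservative, which is exactly the statement that the family $\{q_k^*\}_{k\in K}$ is jointly conservative. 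This is precisely the assertion that $\Fcal$ has enough closed sub-points.

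I expect the main obstacle to be justifying the componentwise decomposition of closed subtopoi of the copower $\Scal^K$, which is the relative analogue of the non-constructive identification of subtopoi of $\Set^K$ with $\Set^{K'}$ flagged in the preceding remark. Making this precise amounts to checking that the poset of open (equivalently closed) subtopoi of a coproduct topos decomposes as the product over the summands and that closed complements are formed summandwise; everything else reuses the stability arguments already established for Theorem~\ref{thm:closedinc}.
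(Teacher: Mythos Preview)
Your proposal is correct and matches the paper's approach exactly: the paper's ``proof'' is the preceding remark observing that a closed subtopos of $\Scal^K$ decomposes as a $K$-indexed coproduct of closed subtopoi of $\Scal$, after which the argument of Theorem~\ref{thm:closedinc} carries over unchanged. Your explicit justification of that decomposition via subterminals of a coproduct is more detailed than what the paper provides, but the strategy is identical.
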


\begin{para}[Beck-Chevalley is the key]
The essential feature exploited in the proof of Theorem \ref{thm:closedinc} is the Beck-Chevalley condition. Indeed, we only needed the \textit{weak} Beck-Chevalley condition, where we merely have a componentwise monic transformation, which amounts to $g$ being \textbf{proper} by \cite[Corollary C3.2.22]{elephant2}. For subtopoi this adds nothing to Theorem \ref{thm:closedinc}, however, since an inclusion is proper if and only if it is tidy, if and only if it is closed. %, but we can extend the statement of Theorem \ref{thm:closedinc} to the case that $\Fcal$ merely admits a \textit{proper injection} to $\Ecal$, where a geometric morphism is an \textbf{injection} if its direct image is faithful.

We can also attain the weak Beck-Chevalley condition by imposing dual hypotheses, but we shall see that this doesn't tell us quite so much.
\end{para}

\begin{lem}[Open points]
Suppose that $\Ecal$ has enough \textit{open} points, which amounts to the existence of an open surjection $\Set^K \to \Ecal$ for some set $K$. Then any subtopos of $\Ecal$ also has enough open points.
\end{lem}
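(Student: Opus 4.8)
The plan is to dualise the proof of Theorem \ref{thm:closedinc}. Rather than pulling a surjection back along a proper (closed) inclusion and feeding properness into a Beck--Chevalley argument, I would pull the \emph{open} surjection $P \colon \Set^K \to \Ecal$ witnessing enough open points back along the arbitrary subtopos inclusion $f \colon \Fcal \rightarrowtail \Ecal$, and exploit openness in place of properness. Concretely, form the pullback of geometric morphisms
\[\begin{tikzcd}
	\Gcal \ar[r, "q"] \ar[d, "g"'] \ar[dr, phantom, "\lrcorner", very near start] & \Fcal \ar[d, "f"] \\
	{\Set^K} \ar[r, "P"'] & \Ecal.
\end{tikzcd}\]

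First I would dispose of the vertical factor. Since inclusions are stable under pullback, $g$ is again an inclusion, so $\Gcal$ is a subtopos of $\Set^K$. As observed in the proof of Theorem \ref{thm:closedinc}, every subtopos of $\Set^K$ has the form $\Set^{K'}$ for some $K' \subseteq K$; unlike there I do not need to restrict to \emph{closed} subtopoi, since $\Set^K \simeq \Sh(K)$ for $K$ the discrete locale, all of whose sublocales are plain subsets. Hence $\Gcal \simeq \Set^{K'}$. Next I would argue that the horizontal factor $q$ inherits from $P$ the property of being an open surjection. Openness of geometric morphisms is stable under pullback, so $q$ is open; the genuine content of the lemma is that surjectivity is \emph{also} preserved here, i.e. that open surjections are pullback-stable (they are of effective descent by the Joyal--Tierney theorem; see also \cite[C3.1]{elephant2}). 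This pullback-stability is exactly the dual Beck--Chevalley phenomenon alluded to before the lemma. Granting it, $q \colon \Set^{K'} \to \Fcal$ is an open surjection, which is precisely the statement that $\Fcal$ has enough open points, the points in question being the composites $\Set \hookrightarrow \Set^{K'} \xrightarrow{q} \Fcal$ indexed by $K'$ (each étale inclusion $\Set \hookrightarrow \Set^{K'}$ is open, and openness is closed under composition).

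The only substantive obstacle is the pullback-stability of open surjections — specifically that surjectivity, which is generally \emph{not} preserved under pullback of topoi, does survive when the surjection is open. This is where the hypothesis does all of its work, and it is also the sense in which this argument tells us ``not quite so much'' as in the closed case: we must lean on the Joyal--Tierney descent machinery to propagate surjectivity, rather than deducing it from elementary faithfulness of inverse image functors as in Theorem \ref{thm:closedinc}, and the resulting conclusion pertains only to the comparatively restrictive class of open points.
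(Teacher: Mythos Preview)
Your proof is correct and follows essentially the same route as the paper: form the pullback, identify $\Gcal \simeq \Set^{K'}$ using that every subtopos of $\Set^K$ is of this form, and show that $q$ is an open surjection. The only cosmetic difference is that you invoke pullback-stability of open surjections directly, whereas the paper extracts the weak Beck--Chevalley condition from openness of $P$ via \cite[Theorem C3.1.27]{elephant2} and then deduces faithfulness of $q^*$ exactly as in Theorem~\ref{thm:closedinc}; these are two packagings of the same fact, so your remark that one ``cannot'' argue via faithfulness here is slightly off.
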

\begin{proof}
The argument proceeds identically to Theorem \ref{thm:closedinc}, this time deducing the weak Beck-Chevalley condition from the fact that the geometric morphism $\Scal \to \Ecal$ is open by \cite[Theorem C3.1.27]{elephant2}. We again rely on the fact that every subtopos of $\Set^K$ is of the form $\Set^{K'}$.
\end{proof}

This result is redundant since by \cite[Lemmas C3.5.1, C3.5.2]{elephant2} a topos admitting an open surjection from $\Set^K$ is necessarily Boolean, so all of its subtopoi are closed and we may thus apply Theorem \ref{thm:closedinc} to them. Its extension to general topoi may be non-trivial, however.

\begin{cor}
Let $\Scal$ be any base topos. Given a topos $\Ecal$ over $\Scal$ which has enough open $\Scal$-valued points, any subtopos $\Fcal$ of $\Ecal$ has enough ($\Scal$-valued) sub-points, meaning that the collection of geometric morphisms coming from subtopoi of $\Scal$ have jointly conservative inverse image functors (and in fact the sub-points which are open suffice).
\end{cor}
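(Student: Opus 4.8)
The plan is to transpose the proof of the preceding \textbf{Open points} lemma to the relative setting, just as the \textbf{Relative version} corollary transposed Theorem \ref{thm:closedinc}. Having enough open $\Scal$-valued points means precisely that there is an open surjection $P: \Scal^K \to \Ecal$ assembled from a jointly conservative family of open $\Scal$-points. Given a subtopos $f: \Fcal \rightarrowtail \Ecal$, I would form the pullback
\[\begin{tikzcd}
	\Gcal \ar[r, "q"] \ar[d, "g"'] \ar[dr, phantom, "\lrcorner", very near start] & \Fcal \ar[d, "f"] \\
	\Scal^K \arrow[r, "P"'] & \Ecal.
\end{tikzcd}\]

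First I would show that $q$ is an open surjection. Surjectivity follows exactly as in Theorem \ref{thm:closedinc}: openness of $P$ yields the weak Beck--Chevalley condition by \cite[Theorem C3.1.27]{elephant2}, so the canonical transformation $P^*f_* \Rightarrow g_*q^*$ is componentwise monic; since $P^*$ is faithful (as $P$ is a surjection) and $f_*$ is faithful (as $f$ is an inclusion), a short naturality argument forces $q^*$ to be faithful. Openness of $q$ is immediate from the stability of open geometric morphisms under pullback.

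The step that genuinely departs from the absolute case is the analysis of $\Gcal$. Because $g$ is an inclusion (a pullback of $f$), $\Gcal$ is a subtopos of $\Scal^K = \coprod_K \Scal$; since local operators on a coproduct of toposes are computed factorwise, this forces a decomposition $\Gcal \simeq \coprod_{k \in K} \Fcal_k$ into summands $\Fcal_k \rightarrowtail \Scal$, each a subtopos of $\Scal$. This replaces the special fact, used in the absolute case, that every subtopos of $\Set^K$ is of the form $\Set^{K'}$, and it is where I expect the only real care to be needed.

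Finally I would assemble the open sub-points. For each $k$ the summand inclusion $\Fcal_k \rightarrowtail \Gcal$ is open, so the composite $\Fcal_k \rightarrowtail \Gcal \xrightarrow{q} \Fcal$ is an open geometric morphism whose domain is a subtopos of $\Scal$, i.e.\ an open $\Scal$-valued sub-point. Joint conservativity of the family $\{\Fcal_k \to \Fcal\}_{k \in K}$ is precisely the faithfulness of $q^*$ established above, since the summand inclusions are jointly surjective onto $\Gcal$. Hence $\Fcal$ has enough open sub-points, as claimed.
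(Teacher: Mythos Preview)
Your proposal is correct and follows precisely the route the paper sets up: the corollary carries no separate proof in the paper, being the evident relativization of the Open points lemma along the lines already spelled out in the Remark preceding the first Relative version corollary (where the paper observes that a subtopos of $\Scal^K$ decomposes as a $K$-indexed coproduct of subtoposes of $\Scal$). Your pullback-plus-Beck--Chevalley argument, the componentwise decomposition of $\Gcal$, and the use of stability of openness under pullback and coproduct inclusions to justify the parenthetical about open sub-points are exactly what is intended.
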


\begin{rem}[Not quite Deligne]
\label{rem:notDeligne}
Theorem \ref{thm:closedinc} is not as general as it might seem at first sight. Indeed, the inclusion of a coherent topos into the category of presheaves on its coherent objects is seldom a closed inclusion, %let $\mathcal{E}$ be a coherent topos, and let $i: \mathcal{E}_{\omega} \hookrightarrow \mathcal{E}$ be its full subcategory of compact objects, then it follows from the general theory of locally presentable categories that the induced `nerve' functor $N(i): \mathcal{E} \to \PSh(\mathcal{E}_\omega)$ preserves filtered colimits, and this functor can be identified as the direct image of the subtopos inclusion $\Ecal \simeq \Sh(\Ecal_{\omega},J_{coh}) \hookrightarrow \PSh(\Ecal_{\omega})$. Thus, this inclusion
although it preserves filtered colimits (and so is \textit{relatively tidy} in the sense of Moerdijk and Vermeulen, \cite{moerdijk2000proper}). Conversely, since we have not been able to verify that every locally finitely presentable topos has enough points, we do not know whether the relativised weak Beck-Chevalley condition that applies to relatively tidy morphisms via the construction of the `comma topos' is sufficient to witness the existence of enough points of relatively tidy subtopoi more generally. %Yet, it is seldom a closed inclusion -- preserving $\Ecal$-indexed filtered colimits is usually a strictly stronger condition than preserving $\Set$-indexed filtered colimits -- which prevents us from directly applying Theorem \ref{thm:closedinc} to derive Deligne's theorem.
\end{rem}

\begin{rem}[A logical interpretation]
Theorem \ref{thm:closedinc} is easier to interpret than Deligne's theorem from a logical point of view. Following \cite[4.2.2.2]{caramello2018theories}, let $\mathbb{E}$ be a geometric theory with enough set models\footnote{Which means that $\mathbb{E} \vdash \phi$ if and only if $\phi$ is true in all $\Set$-valued models.}, and let $\psi$ be a geometric propositional formula (a geometric sequent with no free variables formed from finitary conjunctions, arbitrary disjunctions and existential quantification). Then the quotient theory $\mathbb{E} \cup (\psi \vdash \bot)$ has enough $\Set$-models too. In other words, adding a finite number of negated propositional formulas to a geometric theory cannot break the existence of enough models.
\end{rem}

% https://q.uiver.app/?q=WzAsNCxbMSwwLCJcXG1hdGhjYWx7RX0iXSxbMSwxLCJcXG1hdGhjYWx7Rn0iXSxbMCwxLCJcXHRleHR7U2V0fSJdLFswLDAsInAgXFxSaWdodGFycm93IGYiXSxbMCwxLCJmIl0sWzIsMSwicCIsMl0sWzMsMiwiZyIsMix7InN0eWxlIjp7ImJvZHkiOnsibmFtZSI6ImRhc2hlZCJ9fX1dLFszLDAsInEiLDAseyJzdHlsZSI6eyJib2R5Ijp7Im5hbWUiOiJkYXNoZWQifX19XV0=
% \[\begin{tikzcd}
% 	{p \Rightarrow f} & {\mathcal{E}} \\
% 	{\text{Set}} & {\mathcal{F}}
% 	\arrow["f", from=1-2, to=2-2]
% 	\arrow["p"', from=2-1, to=2-2]
% 	\arrow["g"', dashed, from=1-1, to=2-1]
% 	\arrow["q", dashed, from=1-1, to=1-2]
% \end{tikzcd}\]

% \begin{lem}[Rasiowa-Sikorski Lemma]
% \label{lem:Rasiowa-Sikorski}
% Let $P$ be a poset and $J$ the dense topology on $P$, whose sieves are the stably non-empty ones. Let $\{S_i\}_{i<\omega}$ be a family of dense sieves over an element $u \in P$. Then assuming countable choice, there is a filter $F$ in $P$ containing $u$ such that $F \cap S_i \neq \emptyset$ for every $i$.
% \end{lem}

\section*{Acknowledgements}
The authors started working on this paper in late 2021, and this project has been a collection of ups and downs, attempts and failures. We are indebted to our friends and colleagues \textit{Axel Osmond} and \textit{Joshua Wrigley} who have encouraged us throughout the journey, as well as colleagues in our respective institutions who have patiently listened to vague outlines of the work during its long gestation. A significant portion of this research was carried out during two invitations: Ivan Di Liberti was invited by Université Sorbonne Paris Nord (Paris) in 2022 and Morgan Rogers was invited by Stockholm University (Stockholm) in 2023. Both authors are grateful to these institutions for the financial support that made the research visits possible.

\bibliography{thebib}
\bibliographystyle{alpha}

\end{document}